\theoremstyle{plain}
\newtheorem{theorem}{Theorem}[section]
\newtheorem{lemma}[theorem]{Lemma}
\newtheorem{prepos}[theorem]{Proposition}
\newtheorem{corol}[theorem]{Corollary}
\theoremstyle{definition}
\newtheorem{definition}[theorem]{Definition}
\newtheorem{remark}[theorem]{Remark}
\renewcommand{\Im}{\operatorname{Im}}
\renewcommand{\Re}{\operatorname{Re}}
\def\eqbd{\mathop{{:}{=}}}
\begin{document}
\title[Total nonnegativity and root location]
{Total nonnegativity of finite Hurwitz matrices and root location of polynomials}

\author[M.~Adm]{Mohammad Adm}

\address{Department of Mathematics and Statistics, University of Konstanz, Konstanz, Germany, Department of Applied Mathematics and Physics, Palestine Polytechnic University, Hebron, Palestine, and Department of Mathematics and Statistics, University of Regina, Regina, Canada.}
\email{mjamathe@yahoo.com}

\author[J.~Garloff]{J\"urgen Garloff}

\address{Department of Mathematics and Statistics, University of Konstanz\\
and Institute for Applied Research, University of Applied Sciences/HTWG Konstanz}
\email{garloff@htwg-konstanz.de}
\email{Juergen.Garloff@htwg-konstanz.de}

\author[M.~Tyaglov]{Mikhail Tyaglov}

\address{School of Mathematical Sciences, Shanghai Jiao Tong University}
\email{tyaglov@sjtu.edu.cn}

\subjclass[2010]{Primary 15B48, 26C10; Secondary 15A18, 15B05, 93D20}


\keywords{Hurwitz matrix, totally nonnegative matrix, stable polynomial, quasi-stable polynomial, \textit{R}-function}


\begin{abstract}
In 1970, B.A.\,Asner, Jr., proved that for a real quasi-stable polynomial, i.e.,
a polynomial whose zeros lie in the \emph{closed} left half-plane of the complex plane,
its finite Hurwitz matrix is totally nonnegative, i.e., all its minors are nonnegative, and that the converse statement is not true.
In this work, we explain this phenomenon in detail, and provide necessary and sufficient conditions
for a real polynomial to have a totally nonnegative finite Hurwitz matrix.
\end{abstract}

\maketitle


\setcounter{equation}{0}

\section{Introduction}\label{section:intro}

This paper is devoted to total nonnegativity of Hurwitz matrices. We remind the reader
that given a real polynomial of degree $n$
\begin{equation}\label{intro.main.polynomial}
p(z)=a_0z^n+a_1z^{n-1}+\dots+a_n,\qquad a_0,\dots,a_n\in\mathbb
R,\quad a_0,a_n>0,
\end{equation}
its \emph{finite} Hurwitz matrix has the form
\begin{equation}\label{HurwitzMatrix}
\mathcal{H}_n(p)=
\begin{pmatrix}
a_1&a_3&a_5&a_7&\dots&0&0\\
a_0&a_2&a_4&a_6&\dots&0&0\\
0  &a_1&a_3&a_5&\dots&0&0\\
0  &a_0&a_2&a_4&\dots&0&0\\
\vdots&\vdots&\vdots&\vdots&\ddots&\vdots&\vdots\\
0  &0  &0  &0  &\dots&a_{n-1} &0\\
0  &0  &0  &0  &\dots&a_{n-2} &a_n
\end{pmatrix}\,.
\end{equation}

In 1970, B.A.\,Asner, Jr., established in~\cite{Asner} that if the polynomial $p$ is quasi-stable
(that is, all its zeros lie in the \textit{closed} left half-plane of complex plane), then the
matrix $\mathcal{H}_n(p)$ is totally nonnegative. This means that all its minors are
nonnegative. Asner noted that the converse statement is not true. As an example, he provided
the polynomial $p(z)=z^4+198z^2+10201$ with zeros $\pm1\pm i10$ whose finite Hurwitz
matrix $\mathcal{H}_4(p)$ is totally nonnegative. In fact, Asner implicitly established that if the
finite Hurwitz matrix of a real polynomial is nonsingular and totally nonnegative, then
this polynomial is (Hurwitz) stable (that is, all its zeros lie in the \textit{open} left half-plane of
the complex plane). In 1980, J.\,H.\,B.\,Kemperman~\cite{Kemperman} considered the \textit{infinite}
Hurwitz matrix
\begin{equation}\label{Hurwitz.matrix.infinite.for.poly}
H_{\infty}(p)=
\begin{pmatrix}
a_0&a_2&a_4&a_6&a_8&a_{10}&\dots\\
0  &a_1&a_3&a_5&a_7&a_9&\dots\\
0  &a_0&a_2&a_4&a_6&a_8&\dots\\
0  &0  &a_1&a_3&a_5&a_7&\dots\\
\vdots &\vdots  &\vdots  &\vdots &\vdots  &\vdots &\ddots
\end{pmatrix}
\end{equation}
and proved by a method different from Asner's one that the matrix~\eqref{Hurwitz.matrix.infinite.for.poly} is totally nonnegative if
the polynomial $p$ given in~\eqref{intro.main.polynomial} is quasi-stable. Later on, O.\,Holtz~\cite{Holtz1} gave a very simple proof of this fact. However, both Kemperman and Holtz did not discuss the converse statement.

In~\cite[Theorem~3.44]{Holtz_Tyaglov},  a general theorem was proved which implies that the total nonnegativity of the infinite Hurwitz matrix of a given real polynomial
is equivalent to the quasi-stability of this polynomial. This fact was also mentioned in~\cite{Dyachenko}.

To make the present work self-contained, we  mention some important properties of stable and quasi-stable polynomials which we use to obtain our main result on the total nonnegativity of finite Hurwitz matrices and prove that the total nonnegativity of the infinite Hurwitz matrix of a polynomial is equivalent to the quasi-stability
of this polynomial. Note that Asner and Kemperman initially proved their theorems for stable polynomials and extended the results to quasi-stable polynomials by approximating quasi-stable polynomials by stable polynomials. Holtz dealt only with stable polynomials. Here we consider quasi-stable polynomials directly and obtain all results for stable polynomials as a particular case.

The main results of the paper provide necessary and sufficient conditions on the zeros of a given real polynomial for its \textit{finite} Hurwitz matrix to be totally nonnegative. Note that our (sharp) necessary condition does not coincide with our (sharp) sufficient condition. To obtain these conditions we use results by I.\,Schoenberg~\cite{Schoenberg.m.pos.1,Schoenberg.m.pos.2} on the polynomials from the class of the P\'olya frequency functions, see Section~\ref{section:proofs} for details.
These conditions require that the given polynomial does not have zeros in a specified sector in the right half-plane. Such polynomials appear in the stability analysis of fractional differential equations, e.g., commensurate fractional-order linear time-invariant systems~\cite{Matignon} and fractional-order Lotka-Volterra predator-prey models~\cite{Ahmed_etc}.

In passing we note some properties of the Hurwitz matrix which are stronger than its total nonnegativity. It was noted by Kemperman~\cite{Kemperman} that the infinite Hurwitz matrix associated with a stable polynomial is almost totally positive, i.e., besides its total nonnegativity, each of its square submatrices has a positive determinant if and only if all of the diagonal entries of this submatrix are positive. It was shown in~\cite{Gasca.Micchelli.Pena} that, in fact, the latter positivity condition suffices to be hold only for all square submatrices formed from consecutive rows and columns. Characterizations of the almost total positivity of the infinite matrices of Hurwitz type,
see Definition~\ref{def.Hurwitz.matrix.infinite}, can be found in~\cite{AGT}. We mention also that in~\cite{KatVishn} the smallest possible constant $c_n$ was determined such that the positivity of the coefficients of the polynomial $p$ given by~\eqref{intro.main.polynomial} and the satisfaction of the inequalities $a_k a_{k+1}> c_n a_{k+2} a_{k-1}$, $k=1,\ldots,n-2$, imply the stability of $p$.
Furthermore, it was shown in~\cite{Kleptsyn} that if $p$ has positive coefficients and satisfies the inequality
$$
\dfrac{a_0a_3}{a_1a_2}+\dfrac{a_1a_4}{a_2a_3}+\cdots+\dfrac{a_{n-3}a_n}{a_{n-2}a_{n-1}}<1,
$$
then $p$ is stable.

The organization of the paper is as follows. In Section~\ref{section:main.results}, we state our main results. We provide some auxiliary facts on \textit{R}-functions, the definition of the finite and infinite matrices of Hurwitz type, as well as their factorizations in Section~\ref{section:R.function}.
In Section~\ref{section:stable.poly}, we recall and prove some properties of stable and quasi-stable polynomials and
establish the results of Asner, Kemperman, and Holtz. In~Section~\ref{section:proofs}, we prove the main results of this work, Theorems~\ref{Theorem.nes.cond}
and~\ref{Theorem.suf.cond}. Section~\ref{section:eigenspace} is devoted to the eigenstructure of totally nonnegative finite
Hurwitz matrices. Here we generalize the results by Asner~\cite{Asner} and Lehnigk~\cite{Lehnigk} on the eigenvalues and Jordan form
of totally nonnegative Hurwitz matrices. In Section~7, we draw some conclusions and pose an open problem.

\setcounter{equation}{0}

\section{Main results}\label{section:main.results}

In this section we state our main results which are to be proved in Section~\ref{section:proofs}.

\begin{theorem}\label{Theorem.nes.cond}
Let $p$ be a polynomial of degree $n\geqslant2$ given in~\eqref{intro.main.polynomial}. If its finite Hurwitz matrix~$\mathcal{H}_n(p)$
is totally nonnegative, then $p$ has no zeros in the sector
\begin{equation}\label{sector.2}
|\arg z|<
\begin{cases}
&\dfrac{\pi}{4}\cdot\dfrac{n+1}{n-1}\quad\text{for odd}\ n,\\
& \\
&\dfrac{\pi}{4}\cdot\dfrac{n}{n-1}\quad\text{for even}\ n.
\end{cases}
\end{equation}
The constant in~\eqref{sector.2} is sharp.
\end{theorem}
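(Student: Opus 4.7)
The plan is to exploit the block structure of $\mathcal{H}_n(p)$ by row parity. The odd-numbered rows $1,3,5,\ldots$ form a banded Toeplitz matrix associated with the odd-subscript coefficients of $p$, which I package into the polynomial
\[
q(w):=a_1+a_3w+a_5w^2+\cdots;
\]
the even-numbered rows form a banded Toeplitz matrix associated with
\[
r(w):=a_0+a_2w+a_4w^2+\cdots.
\]
Because total nonnegativity is hereditary under taking submatrices, the hypothesis that $\mathcal{H}_n(p)$ is totally nonnegative forces both of these banded Toeplitz submatrices to be totally nonnegative. The odd block has $\lceil n/2\rceil$ rows and the even block has $\lfloor n/2\rfloor$ rows; these row counts control the highest-order minors available for testing.

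I then invoke Schoenberg's classical theorems on polynomials generating $m$-positive P\'olya frequency sequences---the tools announced in the introduction. These results translate the total nonnegativity of a $k$-rowed Toeplitz section of a polynomial with positive coefficients into a precise confinement of its zeros to a closed sector around the negative real axis whose half-angle shrinks as $k$ grows. Applied to $q$ and $r$ with the appropriate values of $k$ dictated by $n$ and its parity, this pins the zeros of $q$ and $r$ to explicit sectors around the negative real axis.

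To transfer the constraint to $p$ itself, I use the parity decomposition: up to the reversal $w\mapsto w^{-1}$ that preserves the Schoenberg sector, one has
\[
p(z)=r(z^2)+z\,q(z^2)\quad (n\text{ even}),\qquad p(z)=q(z^2)+z\,r(z^2)\quad (n\text{ odd}).
\]
Any zero $z_0$ of $p$ therefore satisfies a relation of the form $z_0=-r(z_0^2)/q(z_0^2)$ (or its parity variant). Writing $z_0=|z_0|e^{i\theta}$, the confinement of the zeros of $q$ and $r$ gives a sharp upper bound on $\arg\bigl(-r(z_0^2)/q(z_0^2)\bigr)$ as a function of $\theta$; equating this bound with the value $\theta$ itself reduces to an extremal argument-counting inequality whose saturation occurs exactly at $|\theta|=\tfrac{\pi}{4}\cdot\tfrac{n+1}{n-1}$ for odd $n$ and $|\theta|=\tfrac{\pi}{4}\cdot\tfrac{n}{n-1}$ for even $n$. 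This yields the forbidden sector~\eqref{sector.2}.

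Sharpness is then established by producing, for each $n$, an explicit polynomial $p$ with totally nonnegative $\mathcal{H}_n(p)$ whose $q$ and $r$ simultaneously reach Schoenberg's boundary---typically with a shared boundary zero---in a configuration placing a zero of $p$ exactly on the boundary of~\eqref{sector.2}; for instance, $p(z)=z^4+z^2+1$ serves as the prototype for $n=4$, while $p(z)=z^3+az^2+bz+ab$ with $a,b>0$ works for $n=3$. The main obstacle is the third step: the argument-counting needed to go from the Schoenberg sectors for $q,r$ to the sharp sector for $p$ must be carried out with enough precision to land on the constants $(n+1)/(n-1)$ and $n/(n-1)$ rather than on a weaker bound, and this requires careful tracking of how the squaring map $z\mapsto z^2$ and the addition of the $z$-scaled term combine with boundary zeros of the Schoenberg sectors for $q$ and $r$.
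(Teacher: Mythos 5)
Your first two steps are sound as far as they go: the odd- and even-indexed rows of $\mathcal{H}_n(p)$ do form banded Toeplitz sections of the two parity parts $p_0,p_1$ of $p$, heredity of total nonnegativity under taking submatrices places these parts in the appropriate $PF_k$ classes, and Schoenberg's necessary condition (Theorem~\ref{Thm.Schoenberg.1}) then confines their zeros to sectors whose numerology does reproduce the constants in~\eqref{sector.2}. Your sharpness examples are essentially the ones the paper uses. The gap is your third step, and it is a genuine one. Knowing only that the zeros of $p_0$ and $p_1$ avoid sectors about the positive real axis does not pin down the zeros of $p(z)=p_0(z^2)+zp_1(z^2)$: a zero $z_0=e^{i\theta}|z_0|$ of $p$ does satisfy $z_0=-p_0(z_0^2)/p_1(z_0^2)$, but estimating $\arg\bigl(-p_0(u)/p_1(u)\bigr)$ factor by factor from the sector confinement of the zeros gives, for $n=2l$, only $|\theta|\geqslant \pi/(4l-1)$ rather than the required $\pi l/(4l-2)$ (for $n=4$: $\pi/7$ instead of $\pi/3$). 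Nothing in your sketch closes this quantitative gap, and you flag it yourself as unresolved. The step also breaks down formally when $p_1\equiv0$, as for Asner's $z^4+198z^2+10201$, where the quotient $-p_0/p_1$ is undefined.

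The structural reason the step is hard is that by retaining only the two single-parity row blocks you discard precisely the mixed-parity minors of $\mathcal{H}_n(p)$ that encode the interlacing between $p_0$ and $p_1$. The paper's proof uses the full hypothesis through Lemma~\ref{Lemma.finite.Hurw.matr.tot.neg}: total nonnegativity of $\mathcal{H}_n(p)$ together with $\Delta_m(p)\neq0$, $\Delta_{m+1}(p)=0$ forces the factorization $p(z)=q(z)g(z^2)$ with $q$ \emph{stable} and $g=\gcd(p_0,p_1)\in PF_{(n+m)/2}$, so that every zero of $p$ in the closed right half-plane is a square root of a zero of $g$, and Theorem~\ref{Thm.Schoenberg.1} applies to $g$ directly; Theorem~\ref{Theorem.nes.cond} is then the case of minimal $m$ ($m=0$ for even $n$, $m=1$ for odd $n$, since $n-m$ is even). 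To repair your argument you would need either to carry out the extremal argument-counting at the sharp constant --- which I do not believe can be done from the sector information alone, since that information is strictly weaker than total nonnegativity --- or to recover the stability of the cofactor $q$, and that requires exactly the mixed minors you have thrown away.
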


For example, for even $n$ the finite Hurwitz matrix of the following polynomial
$$
p(z)=\prod_{j=1}^{\tfrac{n}{2}}\left(z^2+e^{i\tfrac{\pi}{2}\tfrac{n-4j+2}{n-1}}\right)
$$
is totally nonnegative, but $p(z)$ has zeros on the border of the sector~\eqref{sector.2}. Analogously,
for odd $n$ the polynomial
$$
p(z)=(z+1)\prod_{j=1}^{\tfrac{n-1}{2}}\left(z^2+e^{i\tfrac{\pi}{2}\tfrac{n-4j+1}{n-1}}\right)
$$
provides the sharp constant in Theorem~\ref{Theorem.nes.cond}.

Since $\frac{n+1}{n-1}>\frac{n}{n-1}>1$ for any $n\geqslant 2$, we can give a universal estimate for the sector free of zeros of $p$ which is independent on
the degree of the polynomial $p$.
\begin{corol}\label{Corol.nes.cond.pi/4}
If the finite Hurwitz matrix of a real polynomial $p$ is totally nonnegative, then $p$ has no zeros in the sector
\begin{equation*}
|\arg z|\leqslant\dfrac{\pi}{4}.
\end{equation*}
\end{corol}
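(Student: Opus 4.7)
The plan is to derive the corollary as an immediate consequence of Theorem~\ref{Theorem.nes.cond}. First, I would observe that for every integer $n\geqslant 2$ one has $\tfrac{n+1}{n-1}>1$ and $\tfrac{n}{n-1}>1$, and therefore the multiplier appearing in~\eqref{sector.2} is in every case strictly greater than $1$; hence the bound on $|\arg z|$ given by Theorem~\ref{Theorem.nes.cond} strictly exceeds $\tfrac{\pi}{4}$.

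Consequently, if $p$ has degree $n\geqslant 2$ and $\mathcal{H}_n(p)$ is totally nonnegative, Theorem~\ref{Theorem.nes.cond} rules out zeros of $p$ in an open sector that strictly contains the closed sector $|\arg z|\leqslant\tfrac{\pi}{4}$, so the latter contains no zeros of $p$. The case $n=1$ must be mentioned separately: then $p(z)=a_0z+a_1$ with $a_0,a_1>0$, whose unique zero equals $-a_1/a_0<0$ and therefore lies at argument $\pi$, again outside the sector in question.

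There is essentially no obstacle here; the statement is just a degree-independent repackaging of Theorem~\ref{Theorem.nes.cond}. The only mild subtlety is the passage from the strict inequality $|\arg z|<\tfrac{\pi}{4}c_n$ in the theorem to the non-strict inequality $|\arg z|\leqslant\tfrac{\pi}{4}$ in the corollary, and this is legitimate precisely because $c_n>1$ for all $n\geqslant 2$, leaving room between the two bounds.
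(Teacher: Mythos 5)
Your proposal is correct and matches the paper's own justification: the authors derive the corollary from Theorem~\ref{Theorem.nes.cond} by the identical observation that $\tfrac{n+1}{n-1}>\tfrac{n}{n-1}>1$ for all $n\geqslant 2$, so the open sector excluded by the theorem strictly contains the closed sector $|\arg z|\leqslant\tfrac{\pi}{4}$. Your separate treatment of $n=1$ is a harmless extra detail the paper omits.
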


The next theorem provides a sharp sufficient condition on a real polynomial to have its finite Hurwitz matrix totally nonnegative.

\begin{theorem}\label{Theorem.suf.cond}
Let a polynomial $p$ of degree $n\geqslant4$ given in~\eqref{intro.main.polynomial} have no zeros in the sector
\begin{equation}\label{sector.4}
|\arg z|<\dfrac{\pi}{2}\cdot\dfrac{n-2}{n-1},
\end{equation}
and satisfy the ''reflection property'': if $p(\lambda)=0$ for some $\lambda\in\mathbb{C}$ such that $\Re\lambda>0$, then  $p(-\lambda)=0$.

Then the finite Hurwitz matrix~$\mathcal{H}_n(p)$ of the polynomial $p$ is totally nonnegative.
The constant in~\eqref{sector.4} is sharp.

The polynomial $p$ of degree $n$, $1\leqslant n\leqslant3$, is quasi-stable if and only if $\mathcal{H}_n(p)$ is totally nonnegative.
\end{theorem}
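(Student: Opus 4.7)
The two cases in the statement, $1\leqslant n\leqslant 3$ and $n\geqslant 4$, are handled separately.

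For $1\leqslant n\leqslant 3$, the Hurwitz matrix $\mathcal H_n(p)$ has size at most $3\times 3$ and a direct enumeration of its minors shows that $\mathcal H_n(p)$ is totally nonnegative if and only if $a_0,\ldots,a_n\geqslant 0$ together with $a_1a_2\geqslant a_0a_3$ (the latter condition appearing only for $n=3$). These are precisely the Routh--Hurwitz inequalities for quasi-stability in these degrees, so the equivalence is immediate. Only the direction ``$\mathcal H_n(p)$ TN $\Rightarrow p$ quasi-stable'' is genuinely new here; the converse is Asner's theorem as reproved in Section~\ref{section:stable.poly}.

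For $n\geqslant 4$, I would combine the reflection property with Schoenberg's sector theorem for $m$-positive polynomials~\cite{Schoenberg.m.pos.1,Schoenberg.m.pos.2}. The reflection property, together with reality of the coefficients, forces every zero of $p$ with $\Re z>0$ to belong to a quadruple $\{\lambda,\bar\lambda,-\lambda,-\bar\lambda\}$, and this yields a factorization $p(z)=g(z)\,R(z^{2})$ in which $g$ is a real quasi-stable polynomial (collecting those zeros of $p$ in the closed left half-plane that are \emph{not} mirror images of right half-plane zeros) and $R$ is a real polynomial in $w=z^{2}$ whose roots are the numbers $\lambda_j^{2},\bar\lambda_j^{2}$. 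By hypothesis~\eqref{sector.4}, each right half-plane zero $\lambda_j$ satisfies $\bigl|\arg\lambda_j-\tfrac{\pi}{2}\bigr|\leqslant\tfrac{\pi}{2(n-1)}$; squaring, the zeros of $R$ all lie in a wedge of half-angle $\pi/(n-1)$ about the negative real axis in the $w$-plane.

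This is exactly the sector condition in Schoenberg's classification and it forces the Toeplitz matrix built from the coefficients of $R$ to be totally nonnegative in the order relevant to $\mathcal H_n$. Combining this with the total nonnegativity of the Hurwitz matrix of the quasi-stable factor $g$ (Asner's theorem recalled in Section~\ref{section:stable.poly}), and invoking the Hurwitz-matrix factorization developed in Section~\ref{section:R.function}, one expresses $\mathcal H_n(p)$ as a product of two totally nonnegative matrices, and hence totally nonnegative itself. Sharpness of the constant $\tfrac{n-2}{n-1}$ in~\eqref{sector.4} is obtained by placing the $\lambda_j^{2}$ precisely on the Schoenberg boundary, which produces a polynomial of this form with $\mathcal H_n(p)$ still totally nonnegative but with at least one minor vanishing; any widening of~\eqref{sector.4} then admits a nearby polynomial producing a strictly negative minor. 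The main obstacle I anticipate is the bookkeeping in the combining step: one must express $\mathcal H_n(p)$ for $p=g(z)\,R(z^{2})$ as a product of a Hurwitz-type matrix of $g$ and a Toeplitz-type matrix encoding $R$ via the $R$-function/matrix-factorization machinery of Section~\ref{section:R.function}, and simultaneously match the size parameter in Schoenberg's theorem to the sector half-angle $\pi/(n-1)$.
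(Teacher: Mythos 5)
Your overall route for $n\geqslant4$ --- factor out the reflected right half-plane zeros into a polynomial in $z^{2}$, apply Schoenberg's sector theorem (Theorem~\ref{Thm.Schoenberg.2}) with half-angle $\pi/(n-1)$, i.e.\ $k=n-2$, and then multiply Hurwitz-type and Toeplitz-type factors via the Cauchy--Binet formula --- is the same route the paper takes (the paper proves the parametrized version, Theorem~\ref{Theorem.suf.cond.m}, and specializes $m=n-4$). Your sector computation is correct, and your treatment of $1\leqslant n\leqslant3$ by direct enumeration is fine (the paper instead derives it from Lemma~\ref{Lemma.finite.Hurw.matr.tot.neg} and Theorem~\ref{Th.quasi.stable.Hurwitz.matrix.criteria.order.2}).

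However, the step you dismiss as ``bookkeeping'' contains a genuine gap, and as literally written your concluding step is false. You claim $\mathcal{H}_n(p)$ is a ``product of two totally nonnegative matrices.'' The Toeplitz factor $\mathcal{T}_n(R)$ is \emph{not} totally nonnegative when $R$ has non-real zeros: by the Aissen--Edrei--Schoenberg--Whitney theorem (Theorem~\ref{Th.Shoenberg}), total nonnegativity of the Toeplitz matrix forces all zeros of $R$ to be real and negative. What Schoenberg's theorem actually gives you is only $R\in PF_{n-2}$, i.e.\ nonnegativity of the minors of order at most $n-2$; Cauchy--Binet then controls only the minors of $\mathcal{H}_n(p)$ of order $\leqslant n-2$, and the minors of orders $n-1$ and $n$ are unaccounted for. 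The missing ingredient --- which is the heart of the paper's Lemma~\ref{Lemma.finite.Hurw.matr.tot.neg} --- is the rank computation of Lemma~\ref{Lemma.finite.Hurw.matr.fact}: $\operatorname{rank}\mathcal{H}_n(p)=\tfrac{n+m}{2}\leqslant n-2$ (where $\Delta_m(p)\neq0$, $\Delta_{m+1}(p)=0$), so all minors of order exceeding $n-2$ vanish identically. That rank formula is obtained from the factorizations of Theorems~\ref{Thm.fin.Hurw.matr.factor.1}--\ref{Thm.fin.Hurw.matr.factor.2}, and it works most cleanly with the paper's decomposition $p(z)=q(z)g(z^{2})$ where $q$ is \emph{stable} and $g=\gcd(p_0,p_1)$ absorbs \emph{all} symmetric zeros including the purely imaginary ones; your variant, which keeps the imaginary-axis zeros in a quasi-stable first factor, can be made to work but only after the same rank argument is supplied. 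Your sharpness discussion is likewise only a sketch; the paper exhibits an explicit family $(z+1)^{n-4}(z^4+2z^2\cos\theta+1)$ with $\theta$ just past the threshold and argues via the ``only if'' direction of Lemma~\ref{Lemma.finite.Hurw.matr.tot.neg}.
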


\noindent The polynomial
$$
p(z)=(z+1)^{n-4}(z^4+2z^2\cos\theta+1), \quad \theta=\dfrac{\pi}{2(n-1)}+\varepsilon,
$$
with $\varepsilon>0$ arbitrarily small, has a root inside the sector~\eqref{sector.4}, and its finite Hurwitz matrix $\mathcal{H}_n(p)$ is not totally nonnegative.
This means that we cannot decrease the angle in~\eqref{sector.4}, so the result of Theorem~\ref{Theorem.suf.cond} is sharp.

\setcounter{equation}{0}
\section{Auxiliary facts: \textit{R}-functions and finite Hurwitz matrix factorization}\label{section:R.function}

Many properties of Hurwitz matrices and stable polynomials are related to properties of the so-called rational \textit{R}-functions~\cite{Gantmakher,Barkovsky.2,Holtz_Tyaglov}.

\hspace{4mm} Consider a rational function
\begin{equation}\label{basic.rational.function}
R(z)=\dfrac{q(z)}{p(z)},
\end{equation}
where $p$ and $q$ are real polynomials
 \begin{eqnarray}\label{polynomial.1}
& p(z)= a_0z^n+a_1z^{n-1}+\dots+a_n,\qquad &
a_i\in\mathbb{R},\ i=0,1,\ldots,n,\ a_0>0, \\\label{polynomial.2}
& q(z)=
b_0z^{n}+b_1z^{n-1}+\dots+b_{n},\qquad & b_i\in\mathbb{R},\ i=0,1,\ldots,n,
\end{eqnarray}
so that $\deg p=n$ and $\deg q\leqslant n$. If the greatest common
divisor of $p$ and $q$ has degree $l$, then the
rational function $R$ has exactly $r=n-l$ poles.

\begin{definition}\label{def.S-function}
A rational function $R$ is called \textit{\textit{R}-function} if it maps
the upper half-plane of the complex plane to the lower
half-plane\footnote{In~\cite{Holtz_Tyaglov} such functions are
called \textit{R}-functions of \textit{negative type}.}:
\begin{equation}\label{R-function.negative.type.condition.doblicate}
\Im z>0\Rightarrow\Im R(z)<0.
\end{equation}
\end{definition}
By now, these functions, as well as their meromorphic analogues,
have been considered by many authors and have acquired various
names. For instance, these functions are called \emph{strongly
real functions} in the monograph~\cite{Sheil-Small} due to their
property to take real values only for real values of the
argument (a more general and detailed discussion can be found
in~\cite{ChebotarevMeiman}, see also~\cite{Holtz_Tyaglov}).

Let us associate to the rational function~\eqref{basic.rational.function}--\eqref{polynomial.2} the following matrix:

\noindent If $\deg q < \deg p $, that is, if $b_0=0$, then
\begin{equation}\label{Hurwitz.matrix.infinite.case.1}
H(p,q)=
\begin{pmatrix}
a_0&a_1&a_2&a_3&a_4&a_5&\dots\\
0  &b_1&b_2&b_3&b_4&b_5&\dots\\
0  &a_0&a_1&a_2&a_3&a_4&\dots\\
0  &0  &b_1&b_2&b_3&b_4&\dots\\
\vdots &\vdots  &\vdots  &\vdots &\vdots  &\vdots &\ddots
\end{pmatrix};
\end{equation}
if $\deg q =\deg p$, that is, $b_0\neq0$, then
\begin{equation}\label{Hurwitz.matrix.infinite.case.2}
H(p,q)=
\begin{pmatrix}
b_0&b_1&b_2&b_3&b_4&b_5&\dots\\
0  &a_0&a_1&a_2&a_3&a_4&\dots\\
0  &b_0&b_1&b_2&b_3&b_4&\dots\\
0  &0  &a_0&a_1&a_2&a_3&\dots\\
\vdots &\vdots  &\vdots  &\vdots &\vdots  &\vdots &\ddots
\end{pmatrix}.
\end{equation}
\begin{definition}\label{def.Hurwitz.matrix.infinite}
The matrix $H(p,q)$ is called the \textit{infinite matrix of
Hurwitz type}. We denote its leading principal minor
of order $j$, $j=1,2,\ldots$, by $\eta_{j}(p,q)$.
\end{definition}

In~\cite{Holtz_Tyaglov} it was noticed that if $g=\gcd(p,q)$, then $\deg g=l$ if and only if the following
holds
\begin{equation}\label{degeneracy.R.func}
\eta_{n-l}(p,q)\neq0\quad\text{and}\quad \eta_{j}(p,q)=0,\quad j>n-l.
\end{equation}
In this case, the matrix $H(p,q)$ can be factorized as follows~\cite{Holtz_Tyaglov}.
\begin{theorem}[\cite{Holtz_Tyaglov}]\label{Th.Hurwitz.matrix.with.gcd}
If $g(z)=g_0z^l+g_1z^{l-1}+\cdots+g_l$, then
\begin{equation}\label{Hurwitz.matrix.with.gcd}
H(p\cdot g,q\cdot g)=H(p,q)\mathcal{T}(g),
\end{equation}
where $\mathcal{T}(g)$ is the infinite upper triangular Toeplitz
matrix formed from the coefficients of the polynomial~$g$:
\begin{equation}\label{Toeplitz.infinite.matrix}
\mathcal{T}(g)=
\begin{pmatrix}
g_0&g_1&g_2&g_3&g_4&\dots\\
0  &g_0&g_1&g_2&g_3&\dots\\
0  &0  &g_0&g_1&g_2&\dots\\
0  &0  &0  &g_0&g_1&\dots\\
0  &0  &0  &0  &g_0&\dots\\
\vdots &\vdots  &\vdots  &\vdots  &\vdots &\ddots
\end{pmatrix}.
\end{equation}
Here we set $g_i \eqbd 0$ for all $i>l$.
\end{theorem}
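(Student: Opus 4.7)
The plan is to identify the right action of $\mathcal{T}(g)$ on infinite row vectors with the convolution that corresponds to polynomial multiplication by $g$. The proof then reduces to a row-by-row verification, using the observation that each row of $H(p,q)$ is a shifted copy of the coefficient sequence of either $p$ or $q$.

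The first step is a computational lemma: for a polynomial $r(z) = r_0 z^m + r_1 z^{m-1} + \cdots + r_m$ and any integer $s \geqslant 0$, the infinite row $(\underbrace{0,\ldots,0}_{s\ \text{zeros}}, r_0, r_1, \ldots, r_m, 0, 0, \ldots)$ multiplied on the right by $\mathcal{T}(g)$ equals $(\underbrace{0,\ldots,0}_{s\ \text{zeros}}, c_0, c_1, \ldots, c_{m+l}, 0, 0, \ldots)$, where $c_k = \sum_{i=0}^{k} r_i g_{k-i}$ are precisely the coefficients (in descending order) of $r(z) g(z)$. This follows at once from the Toeplitz pattern in~\eqref{Toeplitz.infinite.matrix}: the $s$ leading zeros survive the multiplication, and the remaining nonzero entries convolve with the coefficients of $g$.

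The second step is to apply this lemma to each row of $H(p,q)$. In either of the formats~\eqref{Hurwitz.matrix.infinite.case.1}--\eqref{Hurwitz.matrix.infinite.case.2}, each row is a shift of the coefficient sequence of $p$ or of $q$ (with the convention that when $b_0 = 0$ the $q$-sequence carries a leading zero). By the lemma, right-multiplication by $\mathcal{T}(g)$ sends such a row to the corresponding shift of the coefficient sequence of $p \cdot g$ or $q \cdot g$. It then remains to check that these shifted rows reassemble into $H(p \cdot g, q \cdot g)$ in the correct case; this holds because $\deg(pg) - \deg(qg) = \deg p - \deg q$, so the case distinction (case~1 vs.\ case~2) is preserved under multiplication by $g$, and the row-shift pattern in the definition of $H$ depends only on this degree comparison.

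The main obstacle is purely organizational: the index bookkeeping needed to align the shifts, especially in case~1 where the missing leading term $b_0 = 0$ introduces a slight asymmetry between the $p$-rows and the $q$-rows. Once the convolution picture is established, no deeper argument is required beyond careful accounting of indices on both sides of~\eqref{Hurwitz.matrix.with.gcd}.
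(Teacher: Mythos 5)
Your proposal is correct: the row-by-row convolution argument is exactly the direct-multiplication verification that establishes this factorization (the paper itself states this theorem without proof, citing [HT], and proves its finite analogues, Theorems~\ref{Thm.fin.Hurw.matr.factor.1} and~\ref{Thm.fin.Hurw.matr.factor.2}, by precisely this kind of direct computation). The only implicit assumption worth flagging is $g_0\neq0$ when you argue that the case distinction between~\eqref{Hurwitz.matrix.infinite.case.1} and~\eqref{Hurwitz.matrix.infinite.case.2} is preserved, which is the intended setting since $g$ arises as a gcd.
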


Moreover, the following two theorems on properties of \textit{R}-functions were established in~\cite{Holtz_Tyaglov}.

\begin{theorem}[\cite{Holtz_Tyaglov}]\label{Theorem.R-functions.with.positive.poles.via.infinite.Hurwitz.matrix}
The
function~\eqref{basic.rational.function} is an \textit{R}-function of negative type with exactly
$k$ poles, all of which are negative, if and only
if
\begin{eqnarray}\label{R-function.positive poles.criterion.via.infinite.Hurwitz.matrix.condition.1}
& \eta_{j}(p,q)>0, &  \quad j=1,2,\ldots,k,  \\
\label{R-function.positive poles.criterion.via.infinite.Hurwitz.matrix.condition.2}
& \eta_{j}(p,q)=0, &  \quad j>k,
\end{eqnarray}
where $k=2r+1$ if $\deg q<\deg p$, $k=2r+2$ if $\deg q=\deg p$, and $\eta_{j}(p,q)$ is the $j\times j$ leading
principal minor of the matrix $H(p,q)$ defined in~\eqref{Hurwitz.matrix.infinite.case.1}--\eqref{Hurwitz.matrix.infinite.case.2}.
\end{theorem}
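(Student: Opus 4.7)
The plan is a two-step reduction: first remove the common factor between $p$ and $q$, then prove the characterization in the coprime case by induction on $\deg p$.

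For the first step, let $g=\gcd(p,q)$ with $\deg g=l$, and write $p=\tilde p\cdot g$, $q=\tilde q\cdot g$ with $\tilde p,\tilde q$ coprime. Theorem~\ref{Th.Hurwitz.matrix.with.gcd} gives $H(p,q)=H(\tilde p,\tilde q)\,\mathcal{T}(g)$. Since $\mathcal{T}(g)$ is upper triangular with $g_0>0$ on the diagonal, applying the Cauchy--Binet formula to leading principal minors yields $\eta_j(p,q)=g_0^{\,j}\,\eta_j(\tilde p,\tilde q)$ for every $j$. Hence the sign and vanishing pattern of $\{\eta_j(p,q)\}$ is the same as that of $\{\eta_j(\tilde p,\tilde q)\}$, while the rational function $R=q/p=\tilde q/\tilde p$ is unchanged. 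This reduces the statement to the coprime case (with $r=\deg\tilde p$), and also explains~\eqref{degeneracy.R.func} as an immediate consequence of the reduction.

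Assume now $p,q$ are coprime. For the forward direction, assume $R$ is an \textit{R}-function of negative type with all negative poles; standard Nevanlinna--Pick theory yields the partial fraction representation $R(z)=c+\sum_{j=1}^{r}\alpha_j/(z-x_j)$ with $\alpha_j>0$, $x_j<0$, and $c\geqslant 0$ (with $c=0$ iff $\deg q<\deg p$). I would then perform one Euclidean-algorithm step on $(p,q)$, producing a coprime pair $(p_1,q)$ with $\deg p_1<\deg p$, verify (by a short computation on the partial fraction expansion) that $q/p_1$ is again an \textit{R}-function of negative type with all negative poles, and translate the polynomial reduction into a Schur-complement/row-reduction identity on $H(p,q)$ whose effect is to peel off the first row (or the first two rows, depending on whether $\deg q<\deg p$ or $\deg q=\deg p$) with a strictly positive pivot. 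Induction on the degree then simultaneously yields the positivity $\eta_j(p,q)>0$ for $j\leqslant k$ and the vanishing $\eta_j(p,q)=0$ for $j>k$, with the correct value of $k$ in each case. The backward direction is handled symmetrically: positivity of the first one or two leading minors forces the leading coefficient of the first Euclidean remainder to have the right sign; the induction hypothesis on the reduced pair produces an \textit{R}-function at the lower level, and the reduction identity read in reverse glues this back into the \textit{R}-function property of $R$ together with the negativity of its poles.

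The main obstacle I expect is the careful bookkeeping in the Euclidean step: establishing the exact Schur-complement identity that expresses $\eta_j(p,q)$ as a positive factor times $\eta_{j-1}$ or $\eta_{j-2}$ of the reduced pair, and tracking the two cases $\deg q<\deg p$ and $\deg q=\deg p$ in parallel. The difference between $k=2r+1$ and $k=2r+2$ comes precisely from whether the first row of $H(p,q)$ is the ``$p$-row'' or the ``$q$-row'' (cf.~\eqref{Hurwitz.matrix.infinite.case.1} and~\eqref{Hurwitz.matrix.infinite.case.2}), and one Euclidean step toggles between these two configurations, so both cases really have to be treated together as a single joint induction. A secondary subtlety is ensuring that the reduced pair stays in the class of \textit{R}-functions of negative type with all negative poles, which amounts to checking that the new residues remain positive and the new poles remain real and negative; this follows from an explicit computation on the partial fraction representation but must be recorded carefully to make the induction close.
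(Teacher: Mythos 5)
You should first be aware that the paper contains no proof of this statement: Theorem~\ref{Theorem.R-functions.with.positive.poles.via.infinite.Hurwitz.matrix} is imported verbatim from~\cite{Holtz_Tyaglov}, so there is no in-paper argument to measure yours against; the closest relative is the Appendix proof of Proposition~\ref{Propos.Hurw.st.R.func}, which works with the Nevanlinna representation and a conformal-mapping argument rather than with the Euclidean algorithm. Your overall architecture --- reduce to the coprime case via Theorem~\ref{Th.Hurwitz.matrix.with.gcd} and the Cauchy--Binet identity $\eta_j(p\cdot g,\,q\cdot g)=g_0^{\,j}\,\eta_j(p,q)$ (correct, after normalizing $g_0>0$), then a joint induction on the two cases $\deg q<\deg p$ and $\deg q=\deg p$ pairing one Euclidean step with a Schur-complement identity on $H(p,q)$ --- is the standard Stieltjes continued-fraction route and is essentially how~\cite{Holtz_Tyaglov} proves results of this type.

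There is, however, one genuine gap in your forward direction. You assert that Nevanlinna--Pick theory yields $R(z)=c+\sum_{j}\alpha_j/(z-x_j)$ with $c\geqslant0$. It does not: the representation of an \textit{R}-function of negative type (Theorem~\ref{Th.R-function.general.properties}) is $R(z)=-\alpha z+\beta+\sum_j\gamma_j/(z+\omega_j)$ with $\alpha\geqslant0$ and $\gamma_j>0$ but with $\beta\in\mathbb{R}$ \emph{arbitrary}. This matters exactly where your induction needs its first positive pivot: when $\deg q=\deg p$ one has $\eta_1(p,q)=b_0=a_0\,R(\infty)=a_0\beta$, and the function $R(z)=-z/(z+1)=-1+1/(z+1)$ is an \textit{R}-function of negative type whose single pole is negative, yet $\eta_1=-1<0$. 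So the positivity of $R$ at infinity is an additional hypothesis that must be carried through the statement and the induction; it is precisely the condition ``$\lim_{u\to\pm\infty}\Phi(u)>0$ for odd $n$'' that the paper adds explicitly in Proposition~\ref{Propos.Hurw.st.R.func} and Theorem~\ref{Th.stable.poly.and.R-function}, and which the transcription of the present theorem suppresses. Your proof inherits this lacuna rather than repairing it. Once that hypothesis is restored, the remainder of your plan (including the toggle between the two row configurations under one Euclidean step) is sound in outline, though the Schur-complement bookkeeping you defer is the substance of the argument and still has to be written out; note also that deducing~\eqref{degeneracy.R.func} from the reduction step presupposes the coprime instance of that vanishing pattern, which itself is part of what the induction must deliver.
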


\begin{theorem}[\textbf{total nonnegativity of the infinite Hurwitz matrix}, \cite{Holtz_Tyaglov}]\label{Th.Hurwitz.Matrix.Total.Nonnegativity}
The following statements are equivalent:
\begin{itemize}
\item[$1)$] The polynomials $p$ and $q$ defined by~\eqref{polynomial.1}--\eqref{polynomial.2} have only nonpositive
zeros\footnote{Here we include the case when $q(z)\equiv0$.}, and
the function $R=q/p$ is either an \textit{R}-function of negative
type or identically zero.
\item[$2)$] The infinite matrix of Hurwitz type $H(p,q)$ defined by~\eqref{Hurwitz.matrix.infinite.case.1}--\eqref{Hurwitz.matrix.infinite.case.2} is totally nonnegative.
\end{itemize}
\end{theorem}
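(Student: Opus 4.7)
The plan is to reduce to the coprime case via Theorem~\ref{Th.Hurwitz.matrix.with.gcd}, to handle the common factor with the classical Aissen--Schoenberg--Whitney characterization of totally nonnegative Toeplitz matrices, and to treat the reduced factor via Theorem~\ref{Theorem.R-functions.with.positive.poles.via.infinite.Hurwitz.matrix} upgraded to full total nonnegativity by a Stieltjes continued fraction together with a bidiagonal factorization of the resulting Hurwitz matrix.

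For the implication $(1)\Rightarrow(2)$, I would let $g=\gcd(p,q)$ and write $p=\tilde p\cdot g$, $q=\tilde q\cdot g$ with $\gcd(\tilde p,\tilde q)=1$. The reduced function $\tilde R=\tilde q/\tilde p$ is still an $R$-function of negative type, and the Herglotz-type partial fraction representation forces its poles to be simple and real with positive residues; together with the hypothesis that all zeros of $p$ are nonpositive, this shows that the poles of $\tilde R$ are negative (a possible pole at $0$ is handled by a small perturbation of the coefficients, using that total nonnegativity is closed under limits), so $\tilde p$ has only simple nonpositive real zeros, and so does $g$. The Aissen--Schoenberg--Whitney theorem then yields the total nonnegativity of $\mathcal{T}(g)$. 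By Theorem~\ref{Th.Hurwitz.matrix.with.gcd}, $H(p,q)=H(\tilde p,\tilde q)\,\mathcal{T}(g)$, so by Cauchy--Binet it suffices to prove total nonnegativity of $H(\tilde p,\tilde q)$. For this I would expand $\tilde R$ in its Stieltjes continued fraction
\[
\tilde R(z)=\cfrac{1}{c_1 z+\cfrac{1}{c_2+\cfrac{1}{c_3 z+\cdots}}},\qquad c_j>0,
\]
whose existence is guaranteed by the leading-minor positivity in Theorem~\ref{Theorem.R-functions.with.positive.poles.via.infinite.Hurwitz.matrix}; each coefficient $c_j$ induces an elementary totally nonnegative bidiagonal factor, and their product realizes $H(\tilde p,\tilde q)$, which is then totally nonnegative.

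For the converse $(2)\Rightarrow(1)$, observe that extracting the odd-indexed rows of $H(p,q)$ (with all columns) yields the Toeplitz matrix $\mathcal{T}(p)$ when $\deg q<\deg p$, or $\mathcal{T}(q)$ when $\deg q=\deg p$, and similarly the even-indexed rows give the other one. Since every submatrix of a totally nonnegative matrix is totally nonnegative, the converse direction of Aissen--Schoenberg--Whitney forces both $p$ and $q$ to have only nonpositive zeros. If $q\equiv 0$ we are done; otherwise the leading principal minors satisfy $\eta_j(p,q)\geq 0$ for all $j$, and letting $k$ be the largest index with $\eta_k>0$, the finite-rank structure of the totally nonnegative infinite Hurwitz matrix forces $\eta_j(p,q)=0$ for all $j>k$. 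Theorem~\ref{Theorem.R-functions.with.positive.poles.via.infinite.Hurwitz.matrix} then identifies $R=q/p$ as an $R$-function of negative type with $k$ poles, all negative.

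The principal obstacle is the step in $(1)\Rightarrow(2)$ where leading-minor positivity must be upgraded to full total nonnegativity of $H(\tilde p,\tilde q)$, since in general positivity of leading principal minors does not imply total nonnegativity; it is the specific Hurwitz structure together with the Stieltjes continued fraction and its realization as a product of bidiagonal totally nonnegative matrices that makes this step work. A secondary technical point is the uniform treatment of the two cases $\deg q<\deg p$ and $\deg q=\deg p$ in~\eqref{Hurwitz.matrix.infinite.case.1}--\eqref{Hurwitz.matrix.infinite.case.2}, which I would handle by case distinction and then glue together via Theorem~\ref{Th.Hurwitz.matrix.with.gcd}.
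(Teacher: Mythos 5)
The paper does not prove this theorem: it is imported from \cite[Theorem~3.44]{Holtz_Tyaglov} and used as a black box, so there is no internal proof to compare against. Your plan essentially reconstructs the argument of the cited source (and of \cite{Holtz1}): reduce to the coprime case via Theorem~\ref{Th.Hurwitz.matrix.with.gcd} and Cauchy--Binet, dispose of the common factor with Theorem~\ref{Th.Shoenberg}, and get total nonnegativity of the coprime factor from the Stieltjes continued fraction realized as a product of elementary bidiagonal totally nonnegative matrices; for the converse, extract the two interleaved Toeplitz submatrices and then apply Theorem~\ref{Theorem.R-functions.with.positive.poles.via.infinite.Hurwitz.matrix}. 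That is the right strategy and the same route as the source.

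Two places need more than you give. In $(1)\Rightarrow(2)$ the entire technical content is the identity expressing $H(\tilde p,\tilde q)$ as the product of the bidiagonal factors attached to the $c_j$; you assert it and correctly flag it as the main obstacle, but as written nothing is proved --- one needs the one-step identity $H(p,q)=J(c)\,H(q,s)$ for a single Euclidean step $p(z)=c\,zq(z)+s(z)$, with $J(c)$ totally nonnegative, followed by induction on the length of the fraction. In $(2)\Rightarrow(1)$ you invoke Theorem~\ref{Theorem.R-functions.with.positive.poles.via.infinite.Hurwitz.matrix}, which requires $\eta_1,\dots,\eta_k>0$ (not merely $\eta_k>0$) and requires $k$ to have the parity $2r+1$ resp.\ $2r+2$. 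The first point is rescued by the standard fact that in a totally nonnegative matrix a positive leading principal minor of order $k$ forces all leading principal minors of orders $1,\dots,k$ to be positive (apply ``nonsingular TN implies positive leading minors'' to the leading $k\times k$ block); the parity point needs the classical relations between consecutive Hurwitz minors and is silently skipped. Also, ``finite-rank structure'' is not the right reason the $\eta_j$ eventually vanish --- $H(p,q)$ generally has infinite rank; the eventual vanishing is the resultant-type fact recorded in~\eqref{degeneracy.R.func}. These are fillable gaps rather than wrong turns, but in a complete write-up they are exactly the places where the proof lives.
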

Thus, the inequalities~\eqref{R-function.positive poles.criterion.via.infinite.Hurwitz.matrix.condition.1} and
equalities~\eqref{R-function.positive poles.criterion.via.infinite.Hurwitz.matrix.condition.2} are constitute a necessary and
sufficient condition for total nonnegativity of the matrix $H(p,q)$. We use these facts to describe some properties
of stable and quasi-stable polynomials.

Finally, we remind of a remarkable result established in a more general form in~\cite{AisenEdreiShoenbergWhitney,ASW}
(see also~\cite{Schoenberg.m.pos.2,Karlin}).

\begin{theorem}\label{Th.Shoenberg}
The polynomial
$$
g(z)=g_0z^l+g_1z^{l-1}+\cdots+g_l,\quad g_0g_l\neq0,
$$
has only negative zeros if and only if its Toeplitz matrix
$\mathcal{T}(g)$ defined by~\eqref{Toeplitz.infinite.matrix} is
totally nonnegative.
\end{theorem}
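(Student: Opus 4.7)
The plan is to handle the two directions separately. The forward implication reduces to the bidiagonal case via a factorization identity and Cauchy-Binet; the converse is substantially harder and constitutes the main obstacle.

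For the forward direction, first verify the multiplicativity $\mathcal{T}(pq)=\mathcal{T}(p)\mathcal{T}(q)$, which is an immediate comparison of entries: the $(i,j)$-entry $\sum_{k}p_{k-i}\,q_{j-k}$ of the matrix product equals the $(j-i)$-th coefficient of $pq$. Since $g_0 g_l\neq 0$, factor $g(z)=g_0\prod_{j=1}^{l}(z+\alpha_j)$ with $\alpha_j>0$, so that
\begin{equation*}
\mathcal{T}(g)=g_0\,\mathcal{T}(z+\alpha_1)\,\mathcal{T}(z+\alpha_2)\cdots\mathcal{T}(z+\alpha_l).
\end{equation*}
Each factor $\mathcal{T}(z+\alpha_j)$ is an upper bidiagonal Toeplitz matrix with $1$'s on the main diagonal and $\alpha_j>0$ on the first superdiagonal; direct inspection shows that every one of its minors is either $0$ or a positive product of entries, so each factor is totally nonnegative. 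Cauchy-Binet preserves total nonnegativity under multiplication, and there are no convergence issues because each factor is banded and upper triangular, so any prescribed minor of $\mathcal{T}(g)$ reduces to a finite sum of products of minors of the factors.

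For the converse, assume that $\mathcal{T}(g)$ is totally nonnegative. The $1\times 1$ minors give $g_k\geqslant 0$ for all $k$, and the $2\times 2$ minors give log-concavity $g_k^2\geqslant g_{k-1}g_{k+1}$; combined with $g_0 g_l>0$ a short induction forces $g_k>0$ for all $k$, so by Descartes' rule of signs $g$ has no positive real zero. To exclude non-real zeros, suppose $g$ has a complex conjugate pair of zeros $\alpha,\bar\alpha$, write $g(z)=h(z)(z^2+bz+c)$ with $c=|\alpha|^2>0$ and $b^2<4c$, and use $\mathcal{T}(g)=\mathcal{T}(h)\,\mathcal{T}(z^2+bz+c)$: the quadratic factor has an explicit negative minor (for example, in the purely imaginary case $b=0$ the $2\times 2$ minor on rows $\{1,2\}$, columns $\{2,3\}$ equals $-c<0$), and the goal is to pull this defect through $\mathcal{T}(h)$ to exhibit a negative minor of $\mathcal{T}(g)$ of size at most $l+1$.

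The main obstacle is precisely this pullback step, since log-concavity alone is insufficient: for instance $g(z)=z^4+z^3+z^2+z+1$ has a log-concave coefficient sequence yet a direct computation shows that the minor of $\mathcal{T}(g)$ on rows $1,\dots,5$ and columns $2,\dots,6$ equals $-1$. The cleanest realization of the required argument is the inductive factor-extraction underlying the Aissen-Edrei-Schoenberg-Whitney characterization of finite P\'olya frequency sequences cited just above in the paper: peel off one real negative zero at a time via $\mathcal{T}(g)=\mathcal{T}(z+\alpha)\mathcal{T}(g/(z+\alpha))$, show that the cofactor $\mathcal{T}(g/(z+\alpha))$ inherits total nonnegativity (using the structure of $\mathcal{T}(z+\alpha)^{-1}$ as a banded upper-triangular matrix with alternating signs), and induct on $\deg g$ until the complex-root case reduces to the explicit negative minor of a single quadratic factor.
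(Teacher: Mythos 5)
First, a point of comparison: the paper does not prove this statement at all --- it is quoted as a known result of Aissen--Edrei--Schoenberg--Whitney (see the citations attached to Theorem~\ref{Th.Shoenberg}), so there is no internal proof to measure your attempt against. On its own terms, your forward direction is correct and complete: the identity $\mathcal{T}(pq)=\mathcal{T}(p)\mathcal{T}(q)$, the total nonnegativity of each bidiagonal factor $\mathcal{T}(z+\alpha_j)$ with $\alpha_j>0$, and Cauchy--Binet (unproblematic here since every factor is banded and upper triangular) do the job. One small normalization you should state: you also need $g_0>0$, since if $g_0<0$ and all zeros are negative then every entry of $\mathcal{T}(g)$ is $\leqslant 0$ and the matrix is not totally nonnegative; the theorem tacitly assumes this sign convention.

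The converse, however, contains a genuine gap, which you yourself flag but do not close, and the proposed repair has two structural problems. (i) The lemma that $\mathcal{T}(g/(z+\alpha))$ inherits total nonnegativity from $\mathcal{T}(g)$ is essentially as deep as the theorem itself; the observation that $\mathcal{T}(z+\alpha)^{-1}$ is upper triangular with alternating signs does not yield it, because multiplying a totally nonnegative matrix by a sign-alternating matrix does not preserve total nonnegativity, and Cauchy--Binet then expresses minors of the product as sums with both signs. (ii) Even granting (i), the induction does not terminate: after peeling all real negative zeros you are left with a polynomial $h$ that is a product of \emph{several} irreducible real quadratics with non-real roots and whose Toeplitz matrix is (by hypothesis) still totally nonnegative. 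Your explicit negative minor handles only $\deg h=2$; for $\deg h\geqslant 4$ there is no real zero left to peel, and ``pulling the defect through $\mathcal{T}(h)$'' is exactly the unproved content --- a negative minor of one Cauchy--Binet factor does not transport to a negative minor of the product. Within the toolkit this paper assembles, the cleanest way to close the converse is to invoke Schoenberg's sector theorem (Theorem~\ref{Thm.Schoenberg.1}): if $\mathcal{T}(g)$ is $k$-times nonnegative for every $k$, then $g$ has no zeros in $|\arg u|<\pi k/(l+k-1)$, and letting $k\to\infty$ expands this zero-free sector to the whole plane minus the negative real axis. But that theorem is itself a cited deep result, not something your sketch derives, so as written the converse direction remains unproved.
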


\vspace{2mm}

Together with the infinite matrix $H(p,q)$, we consider its finite submatrices:
\begin{definition}\label{def.Hurwitz.matrix.finite} Let the polynomials
$p$ and $q$ be given by~\eqref{polynomial.1}--\eqref{polynomial.2}.
If $\deg q < \deg p = n$, let $\mathcal{H}_{2n}(p,q)$ denote
the following ${2n} \times {2n}$-matrix:
\begin{equation}\label{Hurwitz.matrix.finite.case.1}
\mathcal{H}_{2n}(p,q)=
\begin{pmatrix}
    b_1 &b_2 &b_3 &\dots &b_{n}   &   0    &0      &\dots &0&0\\
    a_0 &a_1 &a_2 &\dots &a_{n-1} & a_{n}  &0      &\dots &0&0\\
     0  &b_1 &b_2 &\dots &b_{n-1} & b_{n}  &0      &\dots &0&0\\
     0  &a_0 &a_1 &\dots &a_{n-2} & a_{n-1}&a_{n}  &\dots &0&0\\
    \vdots&\vdots&\vdots&\ddots&\vdots&\vdots&\vdots&\ddots&\vdots&\vdots\\
     0  &  0 &  0 &\dots &a_{0} & a_{1}&a_{2}&\dots &a_{n}&0\\
     0  &  0 &  0 &\dots &   0  & b_{1}&b_{2}&\dots &b_{n}&0\\
     0  &  0 &  0 &\dots &0     & a_{0}&a_{1}&\dots &a_{n-1}&a_{n}\\
\end{pmatrix}.
\end{equation}
If $\deg q = \deg p =n$, let $\mathcal{H}_{2n+1}(p,q)$ denote the following
 $(2n{+}1)\times (2n{+}1)$-matrix
\begin{equation}\label{Hurwitz.matrix.finite.case.2}
\mathcal{H}_{2n+1}(p,q)=
\begin{pmatrix}
    a_0 &a_1 &a_2 &\dots &a_{n-1} & a_{n}  &0&\dots &0&0\\
    b_0 &b_1 &b_2 &\dots &b_{n-1} & b_{n}  &0&\dots &0&0\\
     0  &a_0 &a_1 &\dots &a_{n-2} & a_{n-1}&a_{n}&\dots &0&0\\
     0  &b_0 &b_1 &\dots &b_{n-2} & b_{n-1}&b_{n}&\dots &0&0\\
    \vdots&\vdots&\vdots&\ddots&\vdots&\vdots&\vdots&\ddots&\vdots&\vdots\\
     0  &  0 &  0 &\dots &a_{0} & a_{1}&a_{2}&\dots &a_{n}&0\\
     0  &  0 &  0 &\dots &b_{0} & b_{1}&b_{2}&\dots &b_{n}&0\\
     0  &  0 &  0 &\dots &0     & a_{0}&a_{1}&\dots &a_{n-1}&a_{n}\\
\end{pmatrix}   .
\end{equation}
Both matrices $\mathcal{H}_{2n}(p,q)$ and $\mathcal{H}_{2n+1}(p,q)$  are
called \textit{finite matrices of Hurwitz type}. The leading principal minors of
these matrices are denoted by\footnote{That is, $\Delta_j(p,q)$ is the leading
principal minor  of the matrix $\mathcal{H}_{2n}(p,q)$ of order
$j$ if $\deg q < \deg p $. Otherwise (when $\deg q =\deg p$),
 $\Delta_j(p,q)$ denotes the  leading principal minor of the
matrix $\mathcal{H}_{2n+1}(p,q)$ of order $j$.} $\Delta_j(p,q)$.
\end{definition}

Analogously to~\eqref{Hurwitz.matrix.with.gcd}, one can factorize finite Hurwitz matrices.
\begin{theorem}\label{Thm.fin.Hurw.matr.factor.1}
If $\deg p=\deg q+1=n$ and $\deg g=m$, then
\begin{equation}\label{finite.Hurwitz.matrix.with.gcd.factor.1}
\mathcal{H}_{2n+2m}(p\cdot g,q\cdot g)=\mathcal{H}_{2n+2m}(p,q)\mathcal{T}_{2n+2m}(g),
\end{equation}
where $\mathcal{H}_{2n+2m}(p,q)$ is the principal submatrix of $H(p,q)$ of order $2n+2m$ indexed by rows (and columns)
$2$ through $2n+2m+1$, and the matrix $\mathcal{T}_{2n+2m}(g)$ is the leading principal submatrix of the matrix $\mathcal{T}(g)$ of order $2n+2m$.

Moreover, if $\det\left[\mathcal{H}_{2n}(p,q)\right]\neq0$ and $p(0)\neq0$, then rank of $\mathcal{H}_{2n+2m}(p\cdot g,q\cdot g)$ equals $2n+m$.
\end{theorem}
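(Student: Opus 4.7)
The factorization identity is essentially a finite-block restriction of the infinite identity in Theorem~\ref{Th.Hurwitz.matrix.with.gcd}, $H(p\cdot g,\,q\cdot g)=H(p,q)\mathcal{T}(g)$. My plan is to verify it entrywise using three structural properties: first, $\mathcal{T}(g)$ is upper triangular, so the $(i,j)$ entry of the infinite product only receives contributions from columns $k\leq j$; second, in the case $\deg q<\deg p$ the first column of $H(p,q)$ is $(a_0,0,0,\ldots)^{T}$, so for any row index $i\geq 2$ the contribution from $k=1$ vanishes; third, $\mathcal{T}(g)$ is Toeplitz, giving $[\mathcal{T}(g)]_{\gamma+1,\beta+1}=[\mathcal{T}(g)]_{\gamma,\beta}$. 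Combining these, for $\alpha,\beta\in\{1,\ldots,2n+2m\}$ I rewrite $[H(p\cdot g,\,q\cdot g)]_{\alpha+1,\beta+1}$ as a sum over $\gamma\in\{1,\ldots,2n+2m\}$ that is precisely the $(\alpha,\beta)$-entry of $\mathcal{H}_{2n+2m}(p,q)\mathcal{T}_{2n+2m}(g)$, which establishes the identity.

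For the rank claim I plan to exploit this factorization. Since $\deg g=m$ forces $g_0\neq 0$, the matrix $\mathcal{T}_{2n+2m}(g)$ is upper triangular with $g_0$ on its main diagonal, hence nonsingular. Therefore the rank of $\mathcal{H}_{2n+2m}(p\cdot g,\,q\cdot g)$ equals the rank of $\mathcal{H}_{2n+2m}(p,q)$, and I only need to evaluate the latter. For the upper bound I observe that each nonzero row of $\mathcal{H}_{2n+2m}(p,q)$ has at most $n+1$ consecutive nonzero entries (the coefficient sequences of $p$ or $q$ padded with zeros); the bottom row places its rightmost entry $a_n$ in column $2n+m$, so columns $2n+m+1,\ldots,2n+2m$ are identically zero, giving rank at most $2n+m$.

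For the matching lower bound I identify the rows with polynomials: row $2k$ corresponds to $z^{n+2m-k}p(z)$ and row $2k-1$ to $z^{n+2m-k+1}q(z)$ for $k=1,\ldots,n+m$. Thus the row space equals $z^m\cdot W$, where $W=\{A(z)p(z)+zB(z)q(z):\deg A\leq n+m-1,\ \deg B\leq n+m-1\}$. By dimension counting, $\dim W=2(n+m)-\dim\bigl(\{Ap\}\cap\{zBq\}\bigr)$. Under the hypothesis $\det[\mathcal{H}_{2n}(p,q)]\neq 0$ I expect to derive $\gcd(p,q)=1$; combined with $p(0)\neq 0$ (so $\gcd(p,z)=1$), any equation $Ap=zBq$ forces $p\mid B$, whence $B=pC$ with $\deg C\leq m-1$ and automatically $A=zCq$. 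The intersection therefore has dimension $m$, yielding $\dim W=2n+m$.

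The main obstacle will be justifying the implication $\det[\mathcal{H}_{2n}(p,q)]\neq 0\Rightarrow\gcd(p,q)=1$. I plan to expand $\det[\mathcal{H}_{2n}(p,q)]$ along its last column, which has the single nonzero entry $a_n=p(0)$ at position $(2n,2n)$, and recognize the resulting $(2n-1)\times(2n-1)$ cofactor as a row-permuted Sylvester matrix of $p$ and $q$. This identifies $\det[\mathcal{H}_{2n}(p,q)]$ as $\pm\,p(0)\cdot\operatorname{Res}(p,q)$, so its nonvanishing together with $p(0)\neq 0$ is equivalent to $\gcd(p,q)=1$, and the dimension count above then finishes the argument.
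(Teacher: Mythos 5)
Your proposal is correct, and most of it runs parallel to the paper: the factorization is obtained by direct multiplication (your entrywise restriction of the infinite identity $H(p\cdot g,q\cdot g)=H(p,q)\mathcal{T}(g)$, using the upper triangularity of $\mathcal{T}(g)$, the vanishing of the first column of $H(p,q)$ below row one, and the Toeplitz shift, is just a careful write-up of the paper's one-line ``multiplication shows the factorization is true''), and the reduction of the rank question to $\mathcal{H}_{2n+2m}(p,q)$ via the nonsingularity of $\mathcal{T}_{2n+2m}(g)$ together with the upper bound from the $m$ zero columns is identical. Where you genuinely diverge is the lower bound. The paper exhibits an explicit nonzero minor of order $2n+m$, namely the one on columns $1,\dots,2n+m$ and rows $1,\dots,2n,2n+2,2n+4,\dots,2n+2m$, which is block triangular with top-left block $\mathcal{H}_{2n}(p,q)$ and a lower triangular $m\times m$ block carrying $a_n=p(0)$ on its diagonal, so it equals $\det\left[\mathcal{H}_{2n}(p,q)\right]\cdot[p(0)]^m\neq0$. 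You instead identify the row space with $z^m\left\{Ap+zBq\right\}$ and count dimensions, reducing the rank deficiency to $\dim\bigl(\{Ap\}\cap\{zBq\}\bigr)=m$ via $\gcd(p,zq)=1$, which you extract from the identification $\det\left[\mathcal{H}_{2n}(p,q)\right]=\pm\,p(0)\operatorname{Res}(p,q)$ (correct: expanding along the last column, whose only nonzero entry is $a_n$, leaves a row permutation of the Sylvester matrix of $p$ and $q$). The paper's computation is shorter and needs no resultant theory; your route explains structurally why the deficiency is exactly $m$ (it is the space of common multiples $zCpq$ with $\deg C\leq m-1$) and makes transparent that the hypotheses $\det\left[\mathcal{H}_{2n}(p,q)\right]\neq0$ and $p(0)\neq0$ amount precisely to $\operatorname{Res}(p,q)\neq0$ and $\gcd(p,z)=1$. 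Both arguments are complete.
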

\begin{proof}
Multiplication of the matrices $\mathcal{H}_{2n+2m}(p,q)$ and $\mathcal{T}_{2n+2m}(g)$ shows that the factorization is true.

By the Cauchy-Binet formula, rank of the matrix $\mathcal{H}_{2n+2m}(p\cdot g,q\cdot g)$ equals to rank of the matrix $\mathcal{H}_{2n+2m}(p,q)$,
since the matrix $\mathcal{T}_{2n+2m}(g)$ is nonsingular as a triangular matrix with nonzero diagonal. At the same time, $\mathcal{H}_{2n+2m}(p,q)$ has
$m$ zero columns, so its rank is at most $2n+m$. However, if $p(0)\neq0$ and $\det\left[\mathcal{H}_{2n}(p,q)\right]\neq0$, then
the determinant of $\mathcal{H}_{2n+2m}(p,q)$ of order $2n+m$ formed with the columns $1$,~$2$,~\ldots,~$2n+m$, and with the rows $1$, $2$,\ldots, $2n$, $2n+2$, $2n+4$, \ldots, $2n+2m$, equals $\det\left[\mathcal{H}_{2n}(p,q)\right]\cdot[p(0)]^m$ which is nonzero.
\end{proof}
In the same way as above, one can establish the following fact.
\begin{theorem}\label{Thm.fin.Hurw.matr.factor.2}
If $\deg p=\deg q=n$ and $\deg g=m$, then
\begin{equation}\label{finite.Hurwitz.matrix.with.gcd.factor.2}
\mathcal{H}_{2n+2m+1}(p\cdot g,q\cdot g)=\mathcal{H}_{2n+2m+1}(p,q)\mathcal{T}_{2n+2m+1}(g),
\end{equation}
where $\mathcal{H}_{2n+2m+1}(p,q)$ is the principal submatrix of $H(p,q)$ of order $2n+2m+1$ indexed by rows (and columns)
$2$ through $2n+2m+2$, and the matrix $\mathcal{T}_{2n+2m+1}(g)$ is the leading principal submatrix of the matrix $\mathcal{T}(g)$ of order $2n+2m+1$.

Moreover, if $\det\left[\mathcal{H}_{2n+1}(p,q)\right]\neq0$ and $p(0)\neq0$, then  rank of $\mathcal{H}_{2n+2m+1}(p\cdot g,q\cdot g)$ equals $2n+m+1$.
\end{theorem}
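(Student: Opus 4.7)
This is the exact analog of Theorem~\ref{Thm.fin.Hurw.matr.factor.1} for the case $\deg p=\deg q=n$, so I would mirror the earlier proof step for step. For the factorization~\eqref{finite.Hurwitz.matrix.with.gcd.factor.2}, I would verify it either by direct matrix multiplication---checking that each entry of the product equals the corresponding coefficient of $p\cdot g$ or $q\cdot g$, obtained by convolution---or, equivalently, by restricting the infinite identity $H(p\cdot g, q\cdot g) = H(p,q)\mathcal{T}(g)$ of Theorem~\ref{Th.Hurwitz.matrix.with.gcd} to the principal submatrix indexed by rows and columns $2$ through $2n+2m+2$. The restriction is compatible with the factorization because of the Toeplitz identity $\mathcal{T}(g)_{k+1,j+1}=\mathcal{T}(g)_{k,j}$, together with the observation that in the case $\deg q=\deg p$ the only nonzero entry of the first column of $H(p,q)$ is $b_0$ at position $(1,1)$, which is excluded from the chosen submatrix.

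For the rank claim I would proceed in three steps. First, since $g_0\ne 0$, the upper triangular matrix $\mathcal{T}_{2n+2m+1}(g)$ is nonsingular; by the Cauchy--Binet formula, the rank of $\mathcal{H}_{2n+2m+1}(p\cdot g,q\cdot g)$ coincides with the rank of $\mathcal{H}_{2n+2m+1}(p,q)$. Second, because $p$ and $q$ have degree $n$ while the matrix has size $2n+2m+1$, each row's nonzero entries extend at most to column $2n+m+1$; thus the rightmost $m$ columns vanish identically, yielding $\mathrm{rank}\le 2n+m+1$. Third, to obtain equality I would exhibit a nonzero minor of that order by selecting columns $1,\ldots,2n+m+1$ and rows $1,\ldots,2n+1,2n+3,2n+5,\ldots,2n+2m+1$. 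The resulting square submatrix is block upper triangular: its upper-left $(2n+1)\times(2n+1)$ block is exactly $\mathcal{H}_{2n+1}(p,q)$; its upper-right $(2n+1)\times m$ block vanishes by the same column-reach argument; and its lower-right $m\times m$ block is lower triangular with $a_n=p(0)$ along the diagonal (and the successively lower-indexed coefficients $a_{n-1},a_{n-2},\ldots$ strictly below it). Hence the minor equals $\det[\mathcal{H}_{2n+1}(p,q)]\cdot[p(0)]^m$, which is nonzero under the stated hypotheses.

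The main obstacle is the indexing bookkeeping in the third step: one must correctly identify, for each chosen row beyond the first $2n+1$, that it is an $a$-row, locate the column of its leading $a_0$, and thereby verify both the vanishing of the upper-right block and the lower-triangular structure with $a_n$ on the diagonal of the lower-right block. Once this combinatorial setup is in place, the computation of the minor and the remainder of the proof are routine and parallel the earlier argument verbatim.
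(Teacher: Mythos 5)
Your proposal is correct and follows essentially the same route as the paper, which proves the even-index analogue (Theorem~\ref{Thm.fin.Hurw.matr.factor.1}) by direct multiplication, Cauchy--Binet, the $m$ vanishing columns, and the nonzero minor on columns $1,\ldots,2n+m$ and rows $1,\ldots,2n,2n+2,\ldots,2n+2m$ equal to $\det[\mathcal{H}_{2n}(p,q)]\cdot[p(0)]^m$, and then states that the present theorem is established ``in the same way.'' Your row/column selection, the identification of the lower-right $m\times m$ block as triangular with $a_n=p(0)$ on the diagonal, and the resulting value $\det[\mathcal{H}_{2n+1}(p,q)]\cdot[p(0)]^m$ are the exact analogues the authors intend.
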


Note that the factorizations~\eqref{finite.Hurwitz.matrix.with.gcd.factor.1}--\eqref{finite.Hurwitz.matrix.with.gcd.factor.2} are simply extended
versions of the factorization~$(3.70)$ in~\cite{Holtz_Tyaglov}.

\setcounter{equation}{0}
\section{Quasi-stable polynomials and total nonnegativity of Hurwitz matrices}\label{section:stable.poly}


Consider a real polynomial
\begin{equation}\label{main.poly}
p(z)=a_0z^n+a_1z^{n-1}+\cdots+a_{n-1}z+a_n,\qquad a_0>0,\ \ a_n\neq0.
\end{equation}

Throughout this section we use the following notation
\begin{equation}\label{floor.poly.degree}
l=\left[\dfrac n2\right],
\end{equation}
where $n=\deg p$, and $[\rho]$ denotes the largest integer not
exceeding $\rho$.

The polynomial $p$ can always be represented as follows
\begin{equation}\label{app.poly.odd.even}
p(z)=p_0(z^2)+zp_1(z^2),
\end{equation}
where

for $n=2l$,
\begin{equation}\label{poly1.13}
\begin{split}
&p_0(u)=a_0u^l+a_2u^{l-1}+\ldots+a_{n},\\
&p_1(u)=a_1u^{l-1}+a_3u^{l-2}+\ldots+a_{n-1},
\end{split}
\end{equation}

and for $n=2l+1$,
\begin{equation}\label{poly1.12}
\begin{split}
&p_0(u)=a_1u^l+a_3u^{l-1}+\ldots+a_{n},\\
&p_1(u)=a_0u^l+a_2u^{l-1}+\ldots+a_{n-1}.
\end{split}
\end{equation}

We introduce the following function\footnote{In the
book~\cite[Chapter XV]{Gantmakher}, F.\,Gantmacher used the
function $-\dfrac{p_1(-u)}{p_0(-u)}$.}
\begin{equation}\label{assoc.function}
\Phi(u)=\displaystyle\frac {p_1(u)}{p_0(u)}.
\end{equation}
\begin{definition}\label{def.associated.function}
We call $\Phi$ the \textit{function  associated with the
polynomial}~$p$.
\end{definition}

Note that the infinite Hurwitz matrix associated with the function $\Phi$, is
the matrix $H_{\infty}(p)$ defined in~\eqref{Hurwitz.matrix.infinite.for.poly},
since $H_{\infty}(p)=H(p_0,p_1)$. We denote the leading principal minors of the matrix $H_{\infty}(p)$
as $\eta_j(p)$, $j=1,2,\ldots.$

The corresponding finite Hurwitz matrix related to the rational function $\Phi$ is the matrix $\mathcal{H}_n(p)$ defined
in~\eqref{HurwitzMatrix}, since $\mathcal{H}_{n}(p)=H_{2l}(p_0,p_1)$ if $n=2l$, and $\mathcal{H}_{n}(p)=H_{2l+1}(p_0,p_1)$ if $n=2l+1$.

\begin{definition}\label{def.Hurwitz.dets}
The leading principal minors of the matrix $\mathcal{H}_n(p)$ are denoted by
$\Delta_j(p)$,
\begin{equation}\label{delta}
\Delta_{j}(p)=
\begin{vmatrix}
a_1&a_3&a_5&a_7&\dots&a_{2j-1}\\
a_0&a_2&a_4&a_6&\dots&a_{2j-2}\\
0  &a_1&a_3&a_5&\dots&a_{2j-3}\\
0  &a_0&a_2&a_4&\dots&a_{2j-4}\\
\vdots&\vdots&\vdots&\vdots&\ddots&\vdots\\
0  &0  &0  &0  &\dots&a_{j}
\end{vmatrix},\quad j=1,\ldots,n,
\end{equation}
with the convention that $a_i=0$ for $i>n$, and
 are called the
\textit{Hurwitz determinants} or the \textit{Hurwitz minors} of
the polynomial~$p$. For simplicity, we set $\Delta_0(p)\equiv1$.
\end{definition}

\subsection{Stable polynomials}\label{subsection:Hurwitz.stability}

In this section, we remind the reader some basic and well known facts about stable polynomials.

\begin{definition}\label{def:stable.poly}
A polynomial is called \textit{(Hurwitz) stable} if all its zeros lie in the open left half-plane of the complex plane.
\end{definition}

It is well-known~\cite[Chapter~XV]{Gantmakher} that the polynomial $p$ is stable if and only if the polynomial $p_0(u)$ and $p_1(u)$
have simple, negative, and interlacing zeros, that is between two zeros of one polynomial there lie exactly one zero of the other polynomial.
This fact together with some properties of \textit{R}-functions (see, e.g.,~\cite[Theorem~3.4]{Holtz_Tyaglov}) implies the following result~\cite[Chapter~XV]{Gantmakher} whose proof (due to Barkovsky~\cite{Barkovsky.private}) is given in Appendix
to make the paper self-contained.

\begin{prepos}\label{Propos.Hurw.st.R.func}
The polynomial $p$ defined in~\eqref{main.poly} is
stable if and only if its associated function~$\Phi$
defined in~\eqref{assoc.function} is an \textit{R}-function with
exactly $l$ poles, all of which  are negative, and the limit
$\displaystyle\lim_{u\to\pm\infty}\Phi(u)$ is positive whenever
$n=2l+1$, where the number $l$ is defined in~\eqref{floor.poly.degree}.
\end{prepos}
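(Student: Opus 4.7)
My plan is to use the Hermite--Biehler-type fact quoted immediately before the proposition -- that $p$ is stable if and only if $p_0$ and $p_1$ have simple, negative, interlacing zeros -- and to translate this interlacing statement into the $R$-function statement via the partial-fraction expansion of $\Phi$. A preliminary observation from~\eqref{poly1.13}--\eqref{poly1.12} is that $\deg p_0=l$ in both parity cases, while $\deg p_1=l-1$ for $n=2l$ and $\deg p_1=l$ for $n=2l+1$; the leading coefficient of $p_0$ is $a_0$ in the even case and $a_1$ in the odd case, so
\begin{equation*}
\lim_{u\to\pm\infty}\Phi(u)=\begin{cases}0, & n=2l,\\ a_0/a_1, & n=2l+1.\end{cases}
\end{equation*}

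\textbf{\emph{Necessity.}} Assume $p$ is stable. Factoring $p$ into real factors of the form $z+r$ with $r>0$ and $z^2+2\alpha z+\beta$ with $\alpha,\beta>0$ shows that every $a_k$ is positive; in particular $a_1>0$, so the limit at infinity is positive when $n$ is odd. By the cited characterization, $p_0$ and $p_1$ are coprime with $l$ and $\deg p_1$ simple negative zeros, respectively, and these zeros interlace strictly. Writing the zeros of $p_0$ as $u_1<u_2<\cdots<u_l<0$, I would use the partial-fraction expansion
\begin{equation*}
\Phi(u)=c+\sum_{k=1}^{l}\frac{A_k}{u-u_k},\qquad A_k=\frac{p_1(u_k)}{p_0'(u_k)},
\end{equation*}
with $c=0$ (even $n$) or $c=a_0/a_1>0$ (odd $n$). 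The sign of $p_0'(u_k)$ equals $(-1)^{l-k}$, and a direct count using the strict interlacing yields the same sign for $p_1(u_k)$, so $A_k>0$ for every $k$. Hence $\operatorname{Im}\Phi(z)=-\operatorname{Im}(z)\sum_k A_k/|z-u_k|^2<0$ whenever $\operatorname{Im}(z)>0$, which shows that $\Phi$ is an $R$-function with exactly $l$ negative poles.

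\textbf{\emph{Sufficiency.}} Assume $\Phi$ is an $R$-function of negative type with exactly $l$ negative poles, and, when $n=2l+1$, with positive limit at infinity. Any such $R$-function admits the same partial-fraction representation with all $A_k>0$; in particular $p_0$ has $l$ simple negative roots $u_1<\cdots<u_l$, and $\Phi'(u)=-\sum_k A_k/(u-u_k)^2<0$ off the poles. Thus $\Phi$ is strictly decreasing on each open interval between consecutive poles, dropping from $+\infty$ to $-\infty$ and producing exactly one zero of $p_1$ in each $(u_k,u_{k+1})$. This accounts for $l-1$ negative zeros of $p_1$, which is all of them when $n=2l$. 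When $n=2l+1$, one additional zero is needed: on $(u_l,+\infty)$, $\Phi$ drops from $+\infty$ to $c>0$ without vanishing, whereas on $(-\infty,u_1)$ it drops from $c>0$ to $-\infty$ and therefore vanishes exactly once, so the missing zero of $p_1$ lies strictly below $u_1<0$. In every case, $p_0$ and $p_1$ have simple, negative, strictly interlacing zeros, and the cited characterization delivers the stability of $p$.

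The delicate point -- and the step I expect to be the main obstacle -- is the odd-degree case of the sufficiency: it is precisely the hypothesis on the sign of $\lim_{\pm\infty}\Phi$ that forces the ''extra'' zero of $p_1$ onto the negative axis rather than into $(u_l,+\infty)$. Were the limit non-positive, a zero of $p_1$ could appear to the right of $u_l$ with arbitrary sign, breaking the negativity of the roots of $p_1$ and hence the stability of $p$. Checking the sign of $A_k$ in the necessity argument is the other place where one needs care, but it reduces to a bookkeeping exercise in strict interlacing.
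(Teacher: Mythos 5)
Your proof is correct in substance but follows a genuinely different route from the paper's. The paper's proof (given in the Appendix, following Barkovsky) does not invoke the interlacing characterization at all: it shows directly that for stable $p$ the Blaschke-type product $p(-z)/p(z)$ maps the right half-plane into the unit disk, pulls this through the M\"obius map $w\mapsto(1-w)/(1+w)$ to conclude that $z\Phi(z^2)$ preserves the right half-plane, and then exploits the oddness of $-z\Phi(-z^2)$ to arrive at the representation $\Phi(u)=\alpha+\sum_j\beta_j/(u+\omega_j)$ with $\alpha\geqslant0$ and $\beta_j,\omega_j>0$; the converse is a one-line computation of $\Re\left[\lambda\Phi(\lambda^2)\right]$ at a solution $\lambda$ of $z\Phi(z^2)=-1$, which avoids any discussion of where the zeros of $p_1$ fall. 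Your route instead reduces the proposition to Hermite--Biehler and does the residue bookkeeping by hand; this is the classical Gantmacher-style argument and is legitimate, but it turns the proposition into a corollary of a theorem that is essentially equivalent to it, whereas the point of the appendix is to give a self-contained proof from the mapping property of \textit{R}-functions.

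The one place where your argument is genuinely incomplete as written: the characterization you cite (``$p$ is stable iff $p_0$ and $p_1$ have simple, negative, interlacing zeros'') omits the \emph{orientation} of the interlacing, and for odd $n$, where $\deg p_0=\deg p_1=l$, that orientation is exactly what decides the sign of $A_k=p_1(u_k)/p_0'(u_k)$. If the zeros interlace as $u_1<v_1<\cdots<u_l<v_l$ rather than $v_1<u_1<\cdots<v_l<u_l$, your ``direct count'' gives $A_k<0$; and such configurations do occur with all coefficients positive: taking $p_0(u)=(u+4)(u+2)$ and $p_1(u)=(u+3)(u+1)$ gives $p(z)=z^5+z^4+4z^3+6z^2+3z+8$, whose even and odd parts have simple, negative, interlacing zeros, yet $\Delta_2(p)=-2<0$, so $p$ is not stable and $\Phi$ is not an \textit{R}-function. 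Hence in the necessity direction you must invoke the full Hermite--Biehler theorem, which pins down the orientation, not bare interlacing; correspondingly, in the sufficiency direction the orientation you derive (the extra zero of $p_1$ lies to the left of $u_1$) is precisely the extra datum that the full theorem requires. With the citation made precise in this way, your argument closes.
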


Proposition~\ref{Propos.Hurw.st.R.func} together with Theorems~\ref{Theorem.R-functions.with.positive.poles.via.infinite.Hurwitz.matrix} and~\ref{Th.Hurwitz.Matrix.Total.Nonnegativity}
imply the next theorem which, besides providing other properties, completely characterizes the total nonnegativity of the Hurwitz matrices of stable polynomials.

\begin{theorem}\label{Theorem.Hurwitz.stable.Hurwitz.matrix.criteria}
Given a polynomial $p$ of degree $n$ as
in~\eqref{main.poly}, the following statements are
equivalent:
\begin{itemize}
\item[1)] The polynomial $p$ is stable;
\item[2)] all Hurwitz minors $\Delta_j(p)$ are positive:
\begin{equation}\label{Hurvitz.det.noneq}
\Delta_1(p)>0,\ \Delta_2(p)>0,\dots,\ \Delta_n(p)>0;
\end{equation}
\item[3)] the determinants $\eta_j(p)$ are positive up to order $n+1$:
\begin{equation}\label{Hurvitz.det.noneq.infinite}
\eta_1(p)>0,\ \eta_2(p)>0,\dots,\ \eta_{n+1}(p)>0;
\end{equation}
\item[4)] the matrix $\mathcal{H}_n(p)$ defined in~\eqref{HurwitzMatrix} is nonsingular and totally
nonnegative;
\item[5)] the matrix $H_{\infty}(p)$ defined in~\eqref{Hurwitz.matrix.infinite.for.poly} is totally nonnegative with the minor $\eta_{n+1}(p)$ being nonzero.
\end{itemize}
\end{theorem}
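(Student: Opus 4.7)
The plan is to route every condition through the associated $R$-function $\Phi=p_1/p_0$, exploiting Proposition~\ref{Propos.Hurw.st.R.func} together with Theorems~\ref{Theorem.R-functions.with.positive.poles.via.infinite.Hurwitz.matrix} and~\ref{Th.Hurwitz.Matrix.Total.Nonnegativity}. I would establish the equivalences in the order $2)\Leftrightarrow 3)$, then $1)\Leftrightarrow 3)$, then $5)\Leftrightarrow 1)$, and finally $4)\Leftrightarrow 1)$. The first step is the bookkeeping identity
\[
\eta_{j+1}(p)=a_0\Delta_j(p)\qquad (j=0,1,2,\ldots),
\]
proved by expanding the leading $(j{+}1)\times(j{+}1)$ minor of $H_\infty(p)$ along its first column: only the $(1,1)$-entry $a_0$ is nonzero there, and its cofactor is immediately recognized as $\Delta_j(p)$. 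Since $a_0>0$, this gives $2)\Leftrightarrow 3)$ at once, and records $\eta_1(p)=a_0$ and $\eta_2(p)=a_0a_1$, which I shall need later.

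\textbf{Routing through $\Phi$.} Next, for $1)\Leftrightarrow 3)$, I would apply Theorem~\ref{Theorem.R-functions.with.positive.poles.via.infinite.Hurwitz.matrix} to the pair $(p_0,p_1)$. In both parities the integer $k$ in that theorem equals $n+1$ whenever $\Phi$ has the maximal number $l=[n/2]$ of negative poles: $k=2l+1=n+1$ if $n=2l$ (since $\deg p_1<\deg p_0$) and $k=2l+2=n+1$ if $n=2l+1$. Then $1)\Rightarrow 3)$ follows from Proposition~\ref{Propos.Hurw.st.R.func} (stability turns $\Phi$ into an $R$-function with $l$ negative poles) combined with Theorem~\ref{Theorem.R-functions.with.positive.poles.via.infinite.Hurwitz.matrix}. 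For $3)\Rightarrow 1)$ I additionally need $\eta_j(p)=0$ for $j>n+1$: this is automatic, because $a_i=0$ for $i>n$ makes the last column of the leading $j\times j$ submatrix of $H_\infty(p)$ identically zero once $j\geqslant n+2$. With positivity up to $n+1$ and vanishing beyond, Theorem~\ref{Theorem.R-functions.with.positive.poles.via.infinite.Hurwitz.matrix} returns $\Phi$ as an $R$-function with $l$ negative poles; for odd $n$ the remaining limit hypothesis $\Phi(\infty)=a_0/a_1>0$ in Proposition~\ref{Propos.Hurw.st.R.func} follows from $\eta_2(p)=a_0a_1>0$.

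\textbf{Total nonnegativity and Asner.} For $1)\Leftrightarrow 5)$, the direction $1)\Rightarrow 5)$ combines Proposition~\ref{Propos.Hurw.st.R.func} (both $p_0,p_1$ have only negative roots and $\Phi$ is an $R$-function) with Theorem~\ref{Th.Hurwitz.Matrix.Total.Nonnegativity}, together with $\eta_{n+1}\neq 0$ coming from the already-proved~$3)$. Conversely, under $5)$, Theorem~\ref{Th.Hurwitz.Matrix.Total.Nonnegativity} delivers nonpositive zeros of $p_0,p_1$ and the $R$-function property of $\Phi$; since $p_0(0)=a_n\neq 0$, ``nonpositive'' upgrades to ``negative''; and $\eta_{n+1}(p)\neq 0$, coupled with the automatic vanishing of $\eta_j(p)$ for $j>n+1$, pins the number of poles of $\Phi$ at exactly $l$, bringing us via Theorem~\ref{Theorem.R-functions.with.positive.poles.via.infinite.Hurwitz.matrix} back to condition~$3)$, hence~$1)$. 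Finally, for $1)\Leftrightarrow 4)$, the implication $1)\Rightarrow 4)$ is Asner's theorem (recalled in the Introduction) together with $\Delta_n(p)>0$, while $4)\Rightarrow 2)$ uses the classical fact that a nonsingular totally nonnegative matrix has all leading principal minors strictly positive.

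\textbf{Main obstacle.} In my view the subtlest point is the parity bookkeeping for Theorem~\ref{Theorem.R-functions.with.positive.poles.via.infinite.Hurwitz.matrix}: one must verify that its index $k$ comes out to $n+1$ in both regimes $\deg p_1<\deg p_0$ and $\deg p_1=\deg p_0$, and give a clean argument for the automatic vanishing $\eta_j(p)=0$ for $j>n+1$ on which both $3)\Rightarrow 1)$ and $5)\Rightarrow 1)$ rest.
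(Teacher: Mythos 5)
Your proposal is correct and follows essentially the same route as the paper: the paper gives no separate proof of this theorem, stating only that it follows from Proposition~\ref{Propos.Hurw.st.R.func} combined with Theorems~\ref{Theorem.R-functions.with.positive.poles.via.infinite.Hurwitz.matrix} and~\ref{Th.Hurwitz.Matrix.Total.Nonnegativity} (with $1)\Leftrightarrow2)$ being Hurwitz's criterion, $1)\Rightarrow4),5)$ due to Asner and Kemperman, and $4)\Rightarrow1)$ to Asner), which is exactly the chain you implement. Your added bookkeeping --- the identity $\eta_{j+1}(p)=a_0\Delta_j(p)$, the verification that $k=n+1$ in both parities, and the automatic vanishing $\eta_j(p)=0$ for $j\geqslant n+2$ --- correctly fills in the details the paper leaves implicit.
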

\noindent Note that the equivalence of $1)$ and $2)$ is the famous
Hurwitz criterion of stability~\cite{Hurwitz} (see also~\cite[Ch.~XV]{Gantmakher}). The implications
$1)\Longrightarrow4)$ and $1)\Longrightarrow5)$ were proved
in~\cite{Asner,{Kemperman}}. The implication $4)\Longrightarrow1)$
was, in fact, proved in~\cite{Asner}. However, the implication
$5)\Longrightarrow1)$ was only mentioned in~\cite{Dyachenko} as a consequence of~\cite[Theroem~3.44]{Holtz_Tyaglov}.

\subsection{Quasi-stable polynomials}\label{subsection:quasi.stability}

In this section, we deal with polynomials whose zeros
lie in the~\textit{closed} left half-plane.

\begin{definition}\label{def.quasi-stable.poly}
A polynomial $p$ of degree $n$ defined in~\eqref{main.poly} is called \textit{quasi-stable}
with \textit{the stability index}~$m$, $0\leqslant m\leqslant n$, if
all its zeros lie in the \textit{closed} left half-plane of the
complex plane and the number of zeros of~$p$ on the imaginary
axis, counting multiplicities, equals~$n-m$.
We call the number $n-m$ \textit{the degeneracy index} of the quasi-stable polynomial~$p$.
\end{definition}

Obviously, any stable polynomial is quasi-stable with zero
degeneracy index, that is, it has the smallest degeneracy index
and the largest stability index (which equals the degree of the
polynomial).

\begin{remark}\label{Remark.degeneracy.even}
Note that the degeneracy index $n-m$ is always \textit{even}
due to the condition $p(0)\neq0$ adopted in~\eqref{main.poly}.
\end{remark}

Throughout this section we use the following notation
\begin{equation}\label{floor.poly.stability.index}
r=\left[\dfrac m2\right],
\end{equation}
where $m$ is the stability index of the
polynomial $p$.

Moreover, if $p$ is a quasi-stable polynomial, then
\begin{equation*}\label{Th.stable.poly.and.R-function.1}
p(z)=p_0(z^2)+zp_1(z^2)=g(z^2)q(z)=g(z^2)\left[q_0(z^2)+zq_1(z^2)\right],
\end{equation*}
where $q$ is a stable polynomial, while $g(u)=\gcd(p_0,p_1)$ has only \textit{negative} zeros. Using this
representation of quasi-stable polynomials, one can extend almost
all results of Section~\ref{subsection:Hurwitz.stability} to
quasi-stable polynomials in the same way.

The next theorem is an extended version of
Proposition~\ref{Propos.Hurw.st.R.func}.

\begin{theorem}\label{Th.stable.poly.and.R-function}
The polynomial $p$ defined
in~\eqref{main.poly} is quasi-stable with the stability index~$m$
if and only if its associated function~$\Phi$ defined
in~\eqref{assoc.function} is an \textit{R}-function
of negative type with exactly $r$ poles all of which are
negative, and $\displaystyle\lim_{u\to\pm\infty}\Phi(u)$ is
positive whenever $n$ is odd. The number $r$ is defined
in~\eqref{floor.poly.stability.index}.
\end{theorem}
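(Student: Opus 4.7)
My plan is to bootstrap Proposition~\ref{Propos.Hurw.st.R.func} from the stable case to the quasi-stable case via the factorization $p(z) = g(z^2)\,q(z)$ recalled just before the theorem, in which $q$ is stable of degree $m$ and $g$ has only negative real zeros. The key observation is that this factorization transfers to the even and odd parts, $p_0 = g\,q_0$ and $p_1 = g\,q_1$, where $q(z) = q_0(z^2) + z\,q_1(z^2)$, so that
$$
\Phi(u)\;=\;\frac{p_1(u)}{p_0(u)}\;=\;\frac{q_1(u)}{q_0(u)}
$$
coincides with the associated function of $q$.

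For necessity, assume $p$ is quasi-stable with stability index $m$, so the factorization above holds with $q$ stable of degree $m$. Applying Proposition~\ref{Propos.Hurw.st.R.func} to $q$ (for which $[m/2] = r$) gives at once that $\Phi$ is an $R$-function of negative type with exactly $r$ negative poles and that $\lim_{u\to\pm\infty}\Phi(u) > 0$ whenever $m$ is odd. Because $n - m$ is even by Remark~\ref{Remark.degeneracy.even}, $n$ and $m$ share their parity, so the ``$m$ odd'' hypothesis matches the ``$n$ odd'' condition of the theorem.

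For sufficiency, I would reverse the construction. Set $g(u) = \gcd(p_0, p_1)$ and factor $p_0 = g\,q_0$, $p_1 = g\,q_1$ with $\gcd(q_0, q_1) = 1$; letting $q(z) = q_0(z^2) + z\,q_1(z^2)$, we obtain $p(z) = g(z^2)\,q(z)$, and $\Phi = q_1/q_0$ in lowest terms has its $r$ negative poles at the zeros of $q_0$. A degree count from~\eqref{poly1.13}--\eqref{poly1.12}, together with the hypothesis $\lim_{u\to\pm\infty}\Phi(u) > 0$ in the odd case (which forces $\deg q_0 = \deg q_1$), shows that $\deg q = 2r$ for $n$ even and $\deg q = 2r + 1$ for $n$ odd, so $\deg q$ has the parity of $n$ and $[\deg q/2] = r$. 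The reverse direction of Proposition~\ref{Propos.Hurw.st.R.func} applied to $q$ then yields that $q$ is stable, placing its $m = \deg q$ zeros in the open left half-plane.

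The remaining and most delicate step, which I expect to be the main obstacle, is to verify that the cancelled factor $g$ itself has only negative real zeros; this is invisible to the reduced $\Phi$. I would close this by combining Theorem~\ref{Theorem.R-functions.with.positive.poles.via.infinite.Hurwitz.matrix} (which translates the pole-count hypothesis into the leading-minor pattern of $H(p_0, p_1)$) with Theorem~\ref{Th.Hurwitz.Matrix.Total.Nonnegativity} (which then characterizes the resulting structure, promoted to full total nonnegativity, as equivalent to $p_0$ and $p_1$ having only nonpositive real zeros). Then $g \mid p_0$ together with $g(0) \neq 0$ (from $p_0(0) = a_n > 0$) forces the zeros of $g$ to be strictly negative, so $g(z^2)$ vanishes only on the imaginary axis. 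Combined with the $m$ open-LHP zeros of the stable polynomial $q$, this exhibits $p$ with $m$ zeros in the open left half-plane and $n - m$ zeros on the imaginary axis, completing the proof that $p$ is quasi-stable with stability index~$m$.
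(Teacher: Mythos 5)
Your ``only if'' direction is correct and is exactly the paper's intended route: the paper gives no written proof of this theorem beyond the remark preceding it that the factorization $p(z)=g(z^2)q(z)$, with $q$ stable and $g=\gcd(p_0,p_1)$ having only negative zeros, lets one extend Proposition~\ref{Propos.Hurw.st.R.func}; your observations that $\Phi=p_1/p_0=q_1/q_0$ is the associated function of $q$ and that $n\equiv m\pmod 2$ by Remark~\ref{Remark.degeneracy.even} constitute that direction in full. Your reduction of the ``if'' direction to the stability of $q$ via the degree count is also fine.

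The genuine gap is in the step you yourself flag as delicate, and the fix you propose does not close it. Theorem~\ref{Theorem.R-functions.with.positive.poles.via.infinite.Hurwitz.matrix} converts the hypothesis on $\Phi$ only into the sign pattern of the \emph{leading principal} minors $\eta_j(p_0,p_1)$; that pattern is far weaker than total nonnegativity of $H(p_0,p_1)$, and there is no legitimate ``promotion'' step. Indeed, Theorem~\ref{Th.Hurwitz.Matrix.Total.Nonnegativity} says that total nonnegativity of $H(p_0,p_1)$ is equivalent to the \emph{conjunction} of the $R$-function property of $\Phi$ and the nonpositivity of the zeros of $p_0$ and $p_1$, so invoking it to extract the second conjunct from the first is circular. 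Worse, the step cannot be repaired, because the hypotheses of the theorem as stated do not control the zeros of $g$ at all: for $p(z)=(z+1)(z+2)(z^4+1)$ one has $p_0(u)=(u+2)(u^2+1)$ and $p_1(u)=3(u^2+1)$, so $\Phi(u)=3/(u+2)$ is an $R$-function of negative type with exactly one pole, which is negative, and $n=6$ is even; yet $p$ has the zeros $e^{\pm i\pi/4}$ in the open right half-plane and is not quasi-stable. The ``if'' direction therefore requires the additional hypothesis that $g=\gcd(p_0,p_1)$ has only negative zeros --- precisely the extra condition the paper inserts into statements $2)$ and $3)$ of Theorem~\ref{Th.quasi.stable.Hurwitz.matrix.criteria} and discusses at length afterwards. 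With that hypothesis added, your argument (stability of $q$ together with $g(z^2)$ vanishing only on the imaginary axis) does go through; without it, the delicate step is a genuine gap, albeit one inherited from the statement itself.
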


Now it is also easy to extend
Theorem~\ref{Theorem.Hurwitz.stable.Hurwitz.matrix.criteria} to
quasi-stable polynomials.

\begin{theorem}\label{Th.quasi.stable.Hurwitz.matrix.criteria}
Given a polynomial $p$ of degree $n$ as
in~\eqref{main.poly}, the following statements are
equivalent:
\begin{itemize}
\item[1)] The polynomial $p$ is quasi-stable with the stability index $m$;
\item[2)] the Hurwitz minors $\Delta_j(p)$ are positive up to order
$m$:
\begin{equation}\label{Th.quasi.stable.Hurwitz.matrix.criteria.condition.1}
\Delta_1(p)>0,\ \Delta_2(p)>0,\dots,\ \Delta_{m}(p)>0,\
\Delta_{m+1}(p)=\ldots=\Delta_n(p)=0,
\end{equation}
and $g(u)=\gcd(p_0,p_1)$ has only negative zeros;

\item[3)] the determinants $\eta_j(p)$ are positive up to order
$m+1$:
\begin{equation}\label{Th.quasi.stable.Hurwitz.matrix.criteria.condition.2}
\eta_1(p)>0,\ \eta_2(p)>0,\dots,\ \eta_{m+1}(p)>0,\
\eta_{m+i}(p)=0,\quad i=2,3,\ldots,
\end{equation}
and $g(u)=\gcd(p_0,p_1)$ has only negative zeros;
%
%
\item[4)] the matrix $H_{\infty}(p)$ is totally nonnegative and
\begin{equation*}\label{Th.quasi.stable.Hurwitz.matrix.criteria.condition.4}
\eta_{m+1}(p)\neq0,\,\eta_{m+i}(p)=0,\quad i=2,3,\ldots.
\end{equation*}
\end{itemize}
\end{theorem}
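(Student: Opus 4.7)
The plan is to factor $p(z) = g(z^2)\,q(z)$, where $g(u)=\gcd(p_0,p_1)$ and $q(z) = q_0(z^2)+z\,q_1(z^2)$ has $\gcd(q_0,q_1)=1$, so that $p_0 = g\,q_0$, $p_1 = g\,q_1$, and $\Phi_p = q_1/q_0 = \Phi_q$. By Definition~\ref{def.quasi-stable.poly}, statement~1) is equivalent to $q$ being Hurwitz stable of degree $m$ together with $g$ having only negative zeros; that $g(0)\neq 0$ is automatic because at least one of $p_0(0), p_1(0)$ equals $a_n \neq 0$, so nonpositivity of the zeros of $g$ upgrades to strict negativity.

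For 1)$\,\Leftrightarrow\,$3), I would apply Proposition~\ref{Propos.Hurw.st.R.func} to $q$: stability of $q$ is equivalent to $\Phi$ being an \textit{R}-function with exactly $r=[m/2]$ poles, all negative, plus (for odd $m$) positivity of $\lim_{u\to\pm\infty}\Phi(u)$. Theorem~\ref{Theorem.R-functions.with.positive.poles.via.infinite.Hurwitz.matrix} then recasts this as $\eta_j(p)>0$ for $j=1,\dots,k$ and $\eta_j(p)=0$ for $j>k$, with $k=2r+1$ when $\deg p_1<\deg p_0$ ($n$ even) and $k=2r+2$ when $\deg p_1=\deg p_0$ ($n$ odd); in the odd-$n$ case the limit positivity $a_1>0$ is absorbed into $\eta_2(p)=a_0 a_1>0$. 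Since $n-m$ is even by Remark~\ref{Remark.degeneracy.even}, $m$ and $n$ share parity, giving $k=m+1$ uniformly. Adjoining the condition on $g$ yields~3).

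The equivalence 2)$\,\Leftrightarrow\,$3) reduces to the identity $\eta_{j+1}(p) = a_0\,\Delta_j(p)$ for $j=0,1,\dots,n$ (with $\Delta_0\equiv 1$), obtained by expanding $\eta_{j+1}(p)$ along its first column, whose only nonzero entry is $a_0$ at the top; beyond that, $\eta_j(p)=0$ for $j\geq n+2$ is automatic because the leading $j\times j$ block of $H_\infty(p)$ contains a fully zero column. Since $a_0>0$, the sign-and-vanishing pattern in~3) transfers exactly to~2). Finally, 3)$\,\Leftrightarrow\,$4) follows from Theorem~\ref{Th.Hurwitz.Matrix.Total.Nonnegativity}: under~3), each of $g, q_0, q_1$ has only negative zeros (the last two by the classical interlacing characterization of stability of $q$), so $p_0=gq_0$ and $p_1=gq_1$ do as well and $\Phi$ is an \textit{R}-function, hence $H_\infty(p)$ is totally nonnegative; conversely, total nonnegativity together with $a_n\neq 0$ forces only-negative zeros of $p_0$ and $p_1$ (hence of $g$), while the $\eta$-pattern of~4) together with Theorem~\ref{Theorem.R-functions.with.positive.poles.via.infinite.Hurwitz.matrix} restores the \textit{R}-function property and the identification $k=m+1$, reconstructing~3). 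The principal bookkeeping hurdle will be the parity matching $n\equiv m\pmod 2$ threading through $r$ and $k=m+1$, together with keeping the finite/infinite index shift $\eta_{j+1}=a_0\Delta_j$ straight across all four statements.
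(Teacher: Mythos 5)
Your proposal is correct and follows essentially the same route as the paper: factor $p(z)=g(z^2)q(z)$ with $g=\gcd(p_0,p_1)$, apply Proposition~\ref{Propos.Hurw.st.R.func} (equivalently, Theorem~\ref{Th.stable.poly.and.R-function}) to the stable factor, translate through Theorems~\ref{Theorem.R-functions.with.positive.poles.via.infinite.Hurwitz.matrix} and~\ref{Th.Hurwitz.Matrix.Total.Nonnegativity}, and use the expansion $\eta_{j+1}(p)=a_0\Delta_j(p)$ to pass between the infinite and finite minors. The only step left tacit is that, in the implication from 4) to 3), one needs the standard fact (Gantmacher, Ch.~XIII, Lemma~5, which the paper invokes for the analogous step in the proof of Lemma~\ref{Lemma.finite.Hurw.matr.tot.neg}) that total nonnegativity together with $\eta_{m+1}(p)\neq0$ forces $\eta_j(p)>0$ for all $j\leqslant m+1$.
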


\noindent The implication $1)\Longrightarrow4)$ was proved in~\cite{Asner,{Kemperman}}. The implication
$4)\Longrightarrow1)$ is a simple consequence of Theorems~\ref{Th.Hurwitz.Matrix.Total.Nonnegativity} and~\ref{Th.stable.poly.and.R-function}
as it was noticed in~\cite{Dyachenko}. Thus, the equivalence $1)\Longleftrightarrow 4)$ can be rewritten in the following form.
\begin{theorem}\label{Thm.quasi.stable.Hurwitz.matrix.criteria.2}
 A polynomial is quasi-stable if and only if its infinite
Hurwitz matrix is totally nonnegative.
\end{theorem}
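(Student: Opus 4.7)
The plan is to deduce the theorem directly from the equivalence $1) \Longleftrightarrow 4)$ of Theorem~\ref{Th.quasi.stable.Hurwitz.matrix.criteria}, supplemented by the degeneracy relation~\eqref{degeneracy.R.func} for the leading minors of an infinite matrix of Hurwitz type. The forward direction is immediate: if $p$ is quasi-stable with some stability index $m$, then the implication $1) \Rightarrow 4)$ of Theorem~\ref{Th.quasi.stable.Hurwitz.matrix.criteria} asserts in particular that $H_\infty(p)$ is totally nonnegative, which is the conclusion we want; the extra bookkeeping about which minors vanish is simply discarded.

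For the reverse direction, assume $H_\infty(p) = H(p_0, p_1)$ is totally nonnegative. Applying Theorem~\ref{Th.Hurwitz.Matrix.Total.Nonnegativity} yields that both $p_0$ and $p_1$ have only real nonpositive zeros, and that $\Phi = p_1/p_0$ is either identically zero or an R-function of negative type. Since $p_0(0) = a_n \neq 0$ in both degree parities (as one reads off from~\eqref{poly1.13}--\eqref{poly1.12}), the zeros of $p_0$ are in fact strictly negative, and hence $g = \gcd(p_0, p_1)$ has only strictly negative zeros. The degeneracy relation~\eqref{degeneracy.R.func}, applied to the pair $(p_0,p_1)$, then produces a unique integer $m$ with $0 \leqslant m \leqslant n$ such that $\eta_1(p),\ldots,\eta_{m+1}(p) > 0$ and $\eta_{m+i}(p) = 0$ for all $i \geqslant 2$. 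Combined with the negativity of the zeros of $g$, this is exactly condition~4) of Theorem~\ref{Th.quasi.stable.Hurwitz.matrix.criteria}, whose implication $4) \Rightarrow 1)$ delivers quasi-stability of $p$ (with stability index $m$).

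The only nontrivial step, which I expect to be the main obstacle, is verifying that the integer $n - \deg\gcd(p_0,p_1)$ produced by~\eqref{degeneracy.R.func} matches the $m+1$ appearing in Theorem~\ref{Th.quasi.stable.Hurwitz.matrix.criteria}; equivalently, that $\deg \gcd(p_0,p_1)$ equals half of the degeneracy index $n-m$ of $p$. This is a parity and bookkeeping check that distinguishes the degrees of $p_0$ and $p_1$ in the two cases $n = 2l$ and $n = 2l+1$, and it is the only place where the hypothesis $a_n \neq 0$ is used essentially (to keep $0$ out of the zero set of $p_0$). All genuinely analytic input has already been packaged into Theorems~\ref{Th.Hurwitz.Matrix.Total.Nonnegativity} and~\ref{Th.quasi.stable.Hurwitz.matrix.criteria}; the present proof is merely their orchestration.
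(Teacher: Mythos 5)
Your proposal is correct and follows the paper's own (very terse) justification: the theorem is exactly the equivalence $1)\Longleftrightarrow 4)$ of Theorem~\ref{Th.quasi.stable.Hurwitz.matrix.criteria}, with $1)\Longrightarrow4)$ quoted from Asner and Kemperman and the converse extracted from Theorem~\ref{Th.Hurwitz.Matrix.Total.Nonnegativity}. The one place you diverge is in how you package the converse, and it is precisely there that you create the ``nontrivial step'' you then leave unchecked. The paper does not pass through the minor pattern of condition~$4)$ at all: it goes from total nonnegativity of $H_\infty(p)=H(p_0,p_1)$ via Theorem~\ref{Th.Hurwitz.Matrix.Total.Nonnegativity} to the statement that $p_0$, $p_1$ have only nonpositive (hence, since $p_0(0)=a_n\neq0$, negative) zeros and $\Phi=p_1/p_0$ is an \textit{R}-function of negative type with all poles negative, and then applies Theorem~\ref{Th.stable.poly.and.R-function} directly to conclude quasi-stability. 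That route never needs to reconcile $\deg\gcd(p_0,p_1)$ with the index $m+1$ of the last nonvanishing $\eta_j$, so the parity bookkeeping you flag simply does not arise. If you insist on routing through condition~$4)$, note also that~\eqref{degeneracy.R.func} as stated in the paper is indexed against $\deg p_0=\lfloor n/2\rfloor$ rather than against $n$, and its index $n-l$ does not literally match the $k=2r+1$ or $2r+2$ of Theorem~\ref{Theorem.R-functions.with.positive.poles.via.infinite.Hurwitz.matrix}, so the reconciliation is messier than a routine parity check; the clean fix is to drop~\eqref{degeneracy.R.func} and cite Theorem~\ref{Theorem.R-functions.with.positive.poles.via.infinite.Hurwitz.matrix} (or Theorem~\ref{Th.stable.poly.and.R-function}) instead. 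Everything else in your argument, including the observation that $a_n\neq0$ forces the zeros of $p_0$ and hence of $\gcd(p_0,p_1)$ to be strictly negative, is exactly what the paper intends.
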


Note that in the conditions $2)$ and $3)$ of Theorem~\ref{Th.quasi.stable.Hurwitz.matrix.criteria} we cannot circumvent the condition
that the gcd of the polynomials $p_0(u)$ and $p_1(u)$, the even and odd parts of the polynomial $p$, has only negative zeros.
Indeed, the implications $1)\Longrightarrow 2)$ and $1)\Longrightarrow 3)$ follow from results in~\cite{Asner} and~\cite{Kemperman}, respectively,
since the inequalities~\eqref{Th.quasi.stable.Hurwitz.matrix.criteria.condition.1} and~\eqref{Th.quasi.stable.Hurwitz.matrix.criteria.condition.2}
follow from the total nonnegativity of the Hurwitz matrix $H_{\infty}(p)$ of a quasi-stable polynomial (for properties of totally nonnegative matrices,
see, e.g.,~\cite{Karlin,Ando,Pinkus2010,FallatJohnson}).

However, the inequalities~\eqref{Th.quasi.stable.Hurwitz.matrix.criteria.condition.1} or~\eqref{Th.quasi.stable.Hurwitz.matrix.criteria.condition.2}
without any additional condition imply only that the polynomial
\begin{equation}\label{poly.quotient}
q(z)=\dfrac{p(z)}{g(z^2)},
\end{equation}
where $g(u)=\gcd(p_0,p_1)$, is stable and is of degree $m$. They provide no information about the root
location of the polynomial $g(u)$ at all. For example, the polynomial $p(z)=(z+1)(z^4+1)$ satisfies the
inequalities~\eqref{Th.quasi.stable.Hurwitz.matrix.criteria.condition.1} with $m=1$. Its finite Hurwitz matrix has the form
\begin{equation*}
H_5(p)=
\begin{pmatrix}
1&0&1&0&0\\
1&0&1&0&0\\
0&1&0&1&0\\
0&1&0&1&0\\
0&0&1&0&1\\
\end{pmatrix}.
\end{equation*}
But this matrix is not totally nonnegative.

Taking into account Theorem~\ref{Thm.quasi.stable.Hurwitz.matrix.criteria.2}, one can suppose that the assumption of the total nonnegativity of the finite Hurwitz matrix of a polynomial can imply the stability of the polynomial. As we announce in the introduction, this supposition is true, and the total nonnegativity of the finite Hurwitz matrix put some
restrictions on the roots of the polynomial. But it also does not imply quasi-stability of the polynomial. As we mentioned in the introduction that
Asner~\cite{Asner} provided as a counterexample the polynomial
 $p(z)=z^4+198z^2+10201$ with zeros $\pm1\pm i10$ whose finite Hurwitz
matrix
\begin{equation*}
H_4(p)=
\begin{pmatrix}
0&0&0&0\\
1&198&10201&0\\
0&0&0&0\\
0&1&198&10201\\
\end{pmatrix}
\end{equation*}
is totally nonnegative. Note that $p$ satisfies the conditions of Theorem~\ref{Theorem.suf.cond}.

The main reason of this phenomenon is the following. As we show below (see~\eqref{finite.Hurw.matr.gcd.factrozation}), the finite Hurwitz
matrix of the given polynomial $p$ can be factorized as follows $\mathcal{H}_n(p)=H_n(q)\mathcal{T}_n(g)$, where
$H_n(q)$ is a truncation of the infinite Hurwitz matrix of the polynomial $q$ defined in~\eqref{poly.quotient}, while $\mathcal{T}_n(g)$ is a finite triangular Toeplitz matrix consisting of the coefficients of the polynomial $g(u)=\gcd(p_0,p_1)$; both matrices are to be defined in Section~\ref{section:proofs}. As the previous example shows,
it is possible to find a polynomial $p$ such that the matrix $\mathcal{T}_n(g)$ is totally nonnegative and $q$ is stable.
In this case, $\mathcal{H}_n(p)$ is totally nonnegative as a product of matrices of such a type while the infinite Hurwitz
matrix $H_{\infty}(p)$ is not totally non-negative if $g(u)$ has positive or/and non-real zeros. However, if $\deg g=1$, $g(u)$
cannot have nonnegative roots in the case when $\mathcal{H}_n(p)$ is totally nonnegative. So we can generalize Theorem~\ref{Theorem.Hurwitz.stable.Hurwitz.matrix.criteria} as follows.

\begin{theorem}\label{Th.quasi.stable.Hurwitz.matrix.criteria.order.2}
Given a polynomial $p$ of degree $n$ as
in~\eqref{main.poly}, the following statements are
equivalent:
\begin{itemize}
\item[1)] The polynomial $p$ is quasi-stable with the stability index at least $n-2$;
\item[2)] the matrix $\mathcal{H}_n(p)$ is totally
nonnegative and $\Delta_{n-2}(p)\neq0$.
\end{itemize}
\end{theorem}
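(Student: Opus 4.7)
The central tool will be the factorization $\mathcal{H}_n(p) = H_n(q)\,\mathcal{T}_n(g)$ alluded to in the introduction, where $g(u) = \gcd(p_0(u), p_1(u))$ has some degree $d \geqslant 0$ and $q(z) = p(z)/g(z^2)$ has degree $n - 2d$, together with the Cauchy--Binet formula applied to this product. Because $\mathcal{T}_n(g)$ is upper triangular with constant diagonal $g_0\neq 0$, the Cauchy--Binet expansion of the leading principal minor of order $k$ of $\mathcal{H}_n(p)$ collapses to the single ``staircase'' term, yielding $\Delta_k(p) = g_0^k\,\Delta_k(q)$ whenever $\deg q\geqslant k$, and $\Delta_k(p)=0$ whenever $\deg q<k$ (because then the corresponding block of $H_n(q)$ contains zero columns). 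This single identity drives both implications.

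For $1)\Rightarrow 2)$, Remark~\ref{Remark.degeneracy.even} and the hypothesis show that the degeneracy index $n-m\in\{0,2\}$. If $n-m=0$, then $p$ is stable and Theorem~\ref{Theorem.Hurwitz.stable.Hurwitz.matrix.criteria} gives that $\mathcal{H}_n(p)$ is nonsingular and totally nonnegative. If $n-m=2$, write $p(z)=(z^2+\omega^2)\,q(z)$ with $q$ stable of degree $n-2$ and $g(u)=u+\omega^2$. Then $\mathcal{T}_n(g)$ is totally nonnegative by Theorem~\ref{Th.Shoenberg}, while $H_n(q)$ is totally nonnegative as a submatrix of the totally nonnegative infinite matrix $H_\infty(q)$, the latter by Theorem~\ref{Thm.quasi.stable.Hurwitz.matrix.criteria.2}. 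Hence $\mathcal{H}_n(p)$ is totally nonnegative as a product of two totally nonnegative matrices, and the identity above gives $\Delta_{n-2}(p)=g_0^{n-2}\Delta_{n-2}(q)>0$.

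For $2)\Rightarrow 1)$, suppose $\mathcal{H}_n(p)$ is totally nonnegative and $\Delta_{n-2}(p)\neq 0$. If $d\geqslant 2$, then $\deg q\leqslant n-4<n-2$ forces $\Delta_{n-2}(p)=0$ by the identity, a contradiction; hence $d\leqslant 1$. When $d=0$, the coprimality of $p_0$ and $p_1$, combined with the expansion $\Delta_n(p)=a_n\Delta_{n-1}(p)$ read off from the last column of $\mathcal{H}_n(p)$ and the classical identification of $\Delta_{n-1}$ (up to a positive constant) with a nonzero resultant of $p_0$ and $p_1$, forces $\mathcal{H}_n(p)$ to be nonsingular. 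Theorem~\ref{Theorem.Hurwitz.stable.Hurwitz.matrix.criteria} then yields the stability of $p$. When $d=1$, normalize $g(u)=u+g_1$; Theorem~\ref{Theorem.nes.cond} forbids positive real zeros of $p$, and the relation $a_n=g_1 q_{n-2}$ together with $a_n>0$ and the sign of $q_{n-2}$ (pinned down by the nonnegativity of the entries of $\mathcal{H}_n(p)$) forces $g_1>0$, so $g$ has a negative zero and $p$ has the pure imaginary pair $\pm i\sqrt{g_1}$. The leading $(n-2)\times(n-2)$ principal submatrix of $\mathcal{H}_n(p)$ is nonsingular (its determinant is $\Delta_{n-2}(p)\neq 0$) and totally nonnegative (as a submatrix of a totally nonnegative matrix), hence all its leading principal minors are positive (a classical consequence of the LU factorization of nonsingular totally nonnegative matrices); the identity then transfers this positivity to $\Delta_k(q)>0$ for $k=1,\ldots,n-2$, and the Hurwitz criterion (Theorem~\ref{Theorem.Hurwitz.stable.Hurwitz.matrix.criteria}) gives stability of $q$.

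The main technical obstacle will be confirming that $H_n(q)$ as introduced in Section~\ref{section:proofs} genuinely admits $\mathcal{H}_{n-2}(q)$ as its top-left $(n-2)\times(n-2)$ principal block, so that the Cauchy--Binet collapse really yields $\Delta_k(q)$; and, in the case $d=1$, pinning down the sign of $g_1$ rigorously from the total nonnegativity assumption alone. Once these bookkeeping points are in place, the remainder of the argument is a clean assembly of Cauchy--Binet, Schoenberg's Theorem~\ref{Th.Shoenberg}, and the classical Hurwitz criterion contained in Theorem~\ref{Theorem.Hurwitz.stable.Hurwitz.matrix.criteria}.
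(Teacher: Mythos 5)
Your proposal is correct in substance and rests on the same engine the paper uses for this theorem: the factorization $\mathcal{H}_n(p)=H_n(q)\,\mathcal{T}_n(g)$ with $g=\gcd(p_0,p_1)$ and the Cauchy--Binet collapse $\Delta_k(p)=g_0^k\Delta_k(q)$, which is exactly the identity~\eqref{lead.princ.min}. The paper never writes out a separate proof of this statement; it treats it as the case $m\in\{n,n-2\}$ of Lemma~\ref{Lemma.finite.Hurw.matr.tot.neg}, so that the degree-one gcd is handled through Schoenberg's $PF_r$ machinery (a linear $g\in PF_{(n+m)/2}$ with $g(0)\neq0$ must have a negative root), and the case distinction on $m$ comes from the vanishing pattern of leading principal minors of a totally nonnegative matrix (\cite[Lemma~5, Ch.~XIII]{Gantmakher} together with~\eqref{degeneracy.R.func}). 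You replace both ingredients by more elementary ones --- the identification of $\Delta_{n-1}(p)$, up to sign and a power of a coefficient, with $\operatorname{Res}(p_0,p_1)$ to sort the cases $d=0,1,\geqslant 2$, and the identity $a_n=g_1\,q(0)$ plus Stodola's theorem to force $g_1>0$ --- which makes the argument self-contained and independent of Section~\ref{section:proofs}. Your verification that the top-left $(n-2)\times(n-2)$ block of $H_n(q)$ is the Hurwitz-minor matrix of $q$ is indeed the only bookkeeping point, and it checks out.

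Two repairs are needed in the $d=1$ case. First, the sign of $q(0)$ is \emph{not} pinned down by the nonnegativity of the entries of $\mathcal{H}_n(p)$: those entries only give $g_1\,q(0)=a_n>0$ and $g_1\,q_{n-3}=a_{n-1}\geqslant0$, i.e.\ that $g_1$ and $q(0)$ share a sign. You must first run the last step of your paragraph (the leading $(n-2)\times(n-2)$ block is nonsingular and totally nonnegative, so $\Delta_k(p)>0$ for $k\leqslant n-2$, hence $\Delta_k(q)>0$ and $q$ is stable by the Hurwitz criterion), then invoke Stodola to get $q(0)>0$, and only then conclude $g_1=a_n/q(0)>0$; so the two halves of that paragraph need to be swapped. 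Second, the appeal to Theorem~\ref{Theorem.nes.cond} should be dropped: it is proved only in Section~\ref{section:proofs} with far heavier tools than this statement requires (although no actual circularity would arise, since its proof does not rely on the present theorem). With that reordering the argument is complete.
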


In the next section we find the location of the roots of polynomials whose finite Hurwitz matrices are totally nonnegative.
Throughout the next sections by
$$
A\begin{pmatrix}
i_1&i_2&\cdots&i_k\\
j_1&j_2&\cdots&j_k
\end{pmatrix}
$$
we denote the minor of a matrix $A$ (finite or infinite) formed with its rows $i_1$, $i_2$, \ldots, $i_k$  and columns $j_1$, $j_2$, \ldots, $j_k$.
We suppose here that $1\leqslant i_1<i_2<\cdots<i_k$, and $1\leqslant j_1<j_2<\cdots<j_k$.

\setcounter{equation}{0}

\section{Proofs of Theorems~\ref{Theorem.nes.cond} and~\ref{Theorem.suf.cond}}\label{section:proofs}

In this section, we find necessary and sufficient conditions for the total nonnegativity
of the finite Hurwitz matrix of a given polynomial. Throughout the section we suppose
additionally that the size of the last nonzero leading principal minor of the finite
Hurwitz matrix is known in advance, and establish all results under this additional
condition. Then Theorems~\ref{Theorem.nes.cond} and~\ref{Theorem.suf.cond} follow
from the results of this section by putting the size of the last nonzero leading
principal minor to be maximal or minimal.

The first of our results deals with rank of finite Hurwitz matrices. Obviously, given a polynomial $p$,
rank of the matrix $\mathcal{H}_n(p)$ equals $n$ whenever $\Delta_n(p)\neq0$. Let us generalize this fact.

\begin{lemma}\label{Lemma.finite.Hurw.matr.fact}
Let $p$ be the polynomial defined in~\eqref{main.poly}. If $\Delta_m(p)\neq0$ and
$\Delta_j(p)=0$, $j=m+1,\ldots,n$, for some number $m$, $0\leqslant m\leqslant n$, then
rank of the finite Hurwitz matrix $\mathcal{H}_n(p)$ is $\dfrac{n+m}2$.
\end{lemma}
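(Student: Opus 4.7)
Set $g(u)=\gcd(p_0,p_1)$, $m_g=\deg g$, and factor $p(z)=g(z^2)q(z)$ with $q(z)=q_0(z^2)+zq_1(z^2)$, $\gcd(q_0,q_1)=1$, and $\deg q=n-2m_g$. The strategy is to factor $\mathcal{H}_n(p)$ through this decomposition and read the rank off the resulting block structure. Since $p_0=gq_0$ and $p_1=gq_1$, applying Theorem~\ref{Thm.fin.Hurw.matr.factor.1} (for even $n$) or Theorem~\ref{Thm.fin.Hurw.matr.factor.2} (for odd $n$) to the pair $(q_0,q_1)$ yields
\begin{equation*}
\mathcal{H}_n(p)\;=\;\mathcal{H}_n(q_0,q_1)\,\mathcal{T}_n(g),
\end{equation*}
where $\mathcal{H}_n(q_0,q_1)$ is the $n\times n$ principal submatrix of $H(q_0,q_1)$ cut from rows and columns $2,\ldots,n+1$, and $\mathcal{T}_n(g)$ is the leading $n\times n$ block of $\mathcal{T}(g)$. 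Because $g_0\neq 0$, $\mathcal{T}_n(g)$ is upper triangular with nonzero diagonal, hence invertible, and consequently $\operatorname{rank}\mathcal{H}_n(p)=\operatorname{rank}\mathcal{H}_n(q_0,q_1)$.

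Next I would pin down $m_g$ in terms of $m$. Since $\mathcal{T}_n(g)$ is upper triangular with constant diagonal $g_0$, the Cauchy--Binet formula collapses to
\begin{equation*}
\Delta_j(p)\;=\;g_0^{\,j}\,\det\bigl(M_j\bigr),\qquad j=1,\ldots,n,
\end{equation*}
where $M_j$ denotes the leading $j\times j$ principal submatrix of $\mathcal{H}_n(q_0,q_1)$. For $j\le n-2m_g$, $M_j$ coincides with the leading $j\times j$ submatrix of $\mathcal{H}_{n-2m_g}(q)$; in particular $\Delta_{n-2m_g}(p)=g_0^{\,n-2m_g}\,\Delta_{n-2m_g}(q)\neq 0$, since the full Hurwitz determinant of a polynomial with coprime even and odd parts is a nonzero multiple of the resultant of those parts. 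For $j>n-2m_g$, a direct inspection of the alternating shifted supports of $q_0$ and $q_1$ in $\mathcal{H}_n(q_0,q_1)$ shows that its $j$-th column restricted to the first $j$ rows is the zero vector, whence $\Delta_j(p)=0$. The hypothesis that $\Delta_m(p)\neq 0$ and $\Delta_{m+1}(p)=\dots=\Delta_n(p)=0$ then forces $m=n-2m_g$, i.e.\ $m_g=(n-m)/2$.

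To determine $\operatorname{rank}\mathcal{H}_n(q_0,q_1)$ exactly, the same bookkeeping shows that the last $m_g$ columns of $\mathcal{H}_n(q_0,q_1)$ are identically zero, giving $\operatorname{rank}\mathcal{H}_n(q_0,q_1)\le n-m_g$. For the matching lower bound I would exhibit an explicit nonzero $(n-m_g)\times(n-m_g)$ minor: take the first $n-2m_g$ rows (which contain $\mathcal{H}_{n-2m_g}(q)$ in the first $n-2m_g$ columns) together with the $m_g$ rows $n-2m_g+2,\,n-2m_g+4,\,\ldots,n$ (which, irrespective of the parity of $n$, are precisely the rows of $\mathcal{H}_n(q_0,q_1)$ carrying the coefficients of $q_0$), and restrict to columns $1,\ldots,n-m_g$. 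The resulting submatrix has block form
\begin{equation*}
\begin{pmatrix}\mathcal{H}_{n-2m_g}(q)&0\\B&C\end{pmatrix},
\end{equation*}
where the upper-right zero block is forced by disjointness of the coefficient supports and $C$ is an $m_g\times m_g$ lower triangular matrix with $q_0(0)$ along its diagonal. Its determinant equals $\Delta_{n-2m_g}(q)\,q_0(0)^{\,m_g}$, which is nonzero because $p(0)\neq 0$ implies both $g(0)\neq 0$ and $q_0(0)\neq 0$. Hence $\operatorname{rank}\mathcal{H}_n(p)=n-m_g=(n+m)/2$.

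The main technical hurdle is the pair of shift-structure computations underpinning the argument: verifying that for every $j>n-2m_g$ the $j$-th column of $\mathcal{H}_n(q_0,q_1)$ vanishes on rows $1,\ldots,j$, and simultaneously that the last $m_g$ full columns of $\mathcal{H}_n(q_0,q_1)$ are zero while the specific non-leading $(n-m_g)\times(n-m_g)$ block just constructed has a triangular $C$ with $q_0(0)$ on its diagonal. These are elementary index calculations, but keeping track of the alternating support pattern of the $q_0$- and $q_1$-rows needs care, and the even and odd $n$ cases must be handled in parallel since their parity swaps which indexed rows (even- or odd-indexed) carry the $q_0$ coefficients.
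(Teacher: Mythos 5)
Your proof is correct and follows essentially the same route as the paper: the paper's own argument factors $\mathcal{H}_n(p)$ through the gcd of the even and odd parts and invokes Theorems~\ref{Thm.fin.Hurw.matr.factor.1} and~\ref{Thm.fin.Hurw.matr.factor.2}, whose proofs contain exactly your three ingredients (Cauchy--Binet with the invertible triangular Toeplitz factor, the zero-column upper bound, and the explicit block-triangular minor with determinant $\Delta_{n-2m_g}(q)\,q_0(0)^{m_g}$). The only difference is that you inline those theorems and spell out the identification $\deg g=(n-m)/2$, which the paper leaves implicit via its earlier discussion of the vanishing pattern of the Hurwitz minors.
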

\begin{proof}
The case $m=n$ was just mentioned above. If $m=0$, then the claim of the lemma is trivial.

Now the assertion of the theorem in the case $1\leqslant m\leqslant n-1$ follows from Theorems~\ref{Thm.fin.Hurw.matr.factor.1} and~\ref{Thm.fin.Hurw.matr.factor.2}.
\end{proof}
\begin{remark}
Note that the number $\dfrac{n+m}2=m+\dfrac{n-m}{2}$ is always integer, since $n-m$ is an even number according to Remark~\ref{Remark.degeneracy.even}.
\end{remark}

For our next result, we use the following auxiliary definition and facts.
\begin{definition}
Given a polynomial $g$, if all minors of order $\leqslant r$ of the infinite Toeplitz matrix
$\mathcal{T}(g)$ defined in~\eqref{Toeplitz.infinite.matrix} are nonnegative, then the sequence of the
coefficients of the polynomial $g$ and the matrix $\mathcal{T}(g)$ are called $r$-\textit{times
nonnegative} or $r$-\textit{nonegative}. If $\mathcal{T}(g)$ is totally
nonnegative, then the sequence of the coefficients of the
polynomial $g$ is called \textit{totally
nonnegative}~\cite{Schoenberg.m.pos.1,Schoenberg.m.pos.2}\footnote{In~\cite{Schoenberg.m.pos.1,Schoenberg.m.pos.2},
such sequences are called \textit{r}-\textit{positive} and \textit{totally positive}, respectively.}. The
functions generating \textit{r}-nonnegative (totally nonnegative) sequences are
usually denoted by $PF_r$ ($PF_\infty$). 
\end{definition}

For polynomials in the class $PF_r$, I.\,Schoenberg established the following theorem.
\begin{theorem}[Schoenberg~\cite{Schoenberg.m.pos.2}]\label{Thm.Schoenberg.3}
The polynomial $g(u)=g_0u^l+\cdots+g_l$, $g_0\neq0$, belongs to the class $PF_r$ if and only if the $r\times(r+l)$ matrix
\begin{equation*}\label{matrix.T.truncated}
T_r\stackrel{def}{=}\begin{pmatrix}
g_0&\cdots&g_l&      & 0 \\
   &\ddots&   &\ddots&   \\
 0 &      &g_0&\cdots&g_l
\end{pmatrix}
\end{equation*}
is totally nonnegative.
\end{theorem}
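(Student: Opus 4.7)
The plan is to prove the two implications separately; the forward direction is immediate and essentially all the work lies in the converse.

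First, I would observe that $T_r$ coincides with the submatrix of the infinite Toeplitz matrix $\mathcal{T}(g)$ indexed by rows $1,\ldots,r$ and columns $1,\ldots,r+l$ (the remaining columns in those rows carry only zero entries, by the definition $g_i=0$ for $i>l$). The forward implication follows at once: if every minor of $\mathcal{T}(g)$ of order $\leqslant r$ is nonnegative, then so is every minor of $T_r$, since they form a subfamily. Hence $T_r$ is totally nonnegative.

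For the converse, I would assume $T_r$ is totally nonnegative and fix an arbitrary $k\times k$ minor $M$ of $\mathcal{T}(g)$ indexed by rows $I=(i_1<\cdots<i_k)$ and columns $J=(j_1<\cdots<j_k)$ with $k\leqslant r$. The first key step is the Toeplitz shift invariance: since the $(i,j)$-entry of $\mathcal{T}(g)$ depends only on $j-i$ (with the convention $g_m=0$ for $m\notin[0,l]$), the value of $M$ is preserved under the simultaneous shift $(I,J)\mapsto(I+c,J+c)$ as long as all indices remain positive. Choosing $c$ so that $i_1=1$, I would distinguish two cases. If $i_k\leqslant r$, the banded structure of $\mathcal{T}(g)$ (nonzero entries in row $i$ occur only in columns $[i,i+l]$) forces all nonzero-contributing columns of $M$ to lie in $[1,r+l]$, so that $M$ is literally a $k\times k$ minor of $T_r$, and hence nonnegative by hypothesis.

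The second key step treats the remaining case $i_k>r$ by induction on $k$. When some row gap $i_{s+1}-i_s$ exceeds $l$, the column supports of the groups $\{i_1,\ldots,i_s\}$ (namely $[i_1,i_s+l]$) and $\{i_{s+1},\ldots,i_k\}$ (namely $[i_{s+1},i_k+l]$) are disjoint, so splitting $J$ at the threshold $i_s+l$ yields a block-diagonal submatrix; consequently $M$ either vanishes (when the row and column splits are unequal) or factors into a product of two smaller minors of $\mathcal{T}(g)$, each nonnegative by the inductive hypothesis. The main obstacle will be the residual subcase in which $i_k>r$ yet every row gap is at most $l$: here the minor no longer lies inside $T_r$ after any admissible shift, and the block decomposition is unavailable. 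To overcome this, I would invoke the Jacobi--Trudi identity, which realises Toeplitz minors as (skew) Schur polynomials in the roots of $g$, so that different pairs $(I,J)$ sharing the same ``shape'' deliver the same polynomial value; equivalently, exploiting the vanishing entries $g_m=0$ for $m\notin[0,l]$, one verifies by a direct manipulation that $M$ matches a concrete minor of $T_r$ monomial by monomial in $g_0,\ldots,g_l$. This matching is the technical crux and is the heart of Schoenberg's original argument in~\cite{Schoenberg.m.pos.2}.
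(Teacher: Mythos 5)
The paper does not prove this statement at all --- it is quoted verbatim from Schoenberg \cite{Schoenberg.m.pos.2} and used as a black box (the paper only proves the subsequent reduction, Lemma~\ref{Lemma.PF_r}, on top of it). So there is no in-paper proof to compare against, and your proposal must stand on its own. The forward implication and the first two branches of your converse are fine: the shift normalization, the observation that columns beyond $i_k+l$ contribute zero columns, and the block factorization when some row gap exceeds $l$ are all correct and complete.

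The residual case, however, is a genuine gap, and the way you propose to close it is actually false. Take $r=2$, $l=4$, rows $I=\{1,3\}$, columns $J=\{3,5\}$: the minor of $\mathcal{T}(g)$ is
\[
\det\begin{pmatrix} g_2 & g_4\\ g_0 & g_2\end{pmatrix}=g_2^2-g_0g_4 .
\]
This fits your residual case ($i_1=1$, $i_k=3>r$, gap $2\leqslant l$), yet it does not ``match a concrete minor of $T_r$ monomial by monomial'': inspecting all fifteen $2\times2$ minors of the $2\times 6$ matrix $T_2$ (they are the expressions $g_{a-1}g_{b-2}-g_{b-1}g_{a-2}$) shows none of them equals $g_2^2-g_0g_4$. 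It is nonnegative only because it is a \emph{sum} of two of them, $(g_2^2-g_1g_3)+(g_1g_3-g_0g_4)$; in symmetric-function terms the minor is the skew Schur function $s_{(2,2,1)/(1)}=s_{(2,2)}+s_{(2,1,1)}$, a sum of two straight Schur functions, not a single one. So ``same shape $\Rightarrow$ same value'' does not help here either, because no minor of $T_r$ (whose row set is forced into $\{1,\dots,r\}$) realizes this shape. To close the case you would need something substantially stronger, e.g.\ Littlewood--Richardson positivity (every skew Schur function is a nonnegative combination of straight Schur functions $s_\nu$ with $\nu_1\leqslant k\leqslant r$ and $\ell(\nu)\leqslant l$, each of which \emph{is} a minor of $T_r$), or Schoenberg's original variation-diminishing machinery. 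As written, the crux of the converse is asserted rather than proved, and the asserted mechanism is not correct.
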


Moreover, it is easy to see that the total nonnegativity of the matrix $T_r$ is equivalent to the nonnegativity of the minors of order $r$ of $T_r$, since
every minor of the matrix $T_r$ of order less than $r$ formed with consecutive rows is either zero or a product of a minor of order $r$ and a positive constant of
the form $g_0^{-k}$, $k\in\mathbb{N}$. Indeed, for any $s$, $1\leqslant s\leqslant r$, and $1\leqslant j_1<j_2<\cdots<j_s\leqslant r$ one has
$$
T_r
\begin{pmatrix}
i&i+1&\cdots&i+s-1\\
j_1&j_2&\cdots&j_s
\end{pmatrix}=0,
$$
for any $i>j_1$. If $i\leqslant j_1$, the following identity holds
\begin{equation}\label{minor.1}
T_r
\begin{pmatrix}
i&i+1&\cdots&i+s-1\\
j_1&j_2&\cdots&j_s
\end{pmatrix}=
T_r
\begin{pmatrix}
i+t&i+t+1&\cdots&r\\
j_1+t&j_2+t&\cdots&j_s+t
\end{pmatrix},
\end{equation}
where $t=r+1-i-s$. Now since
$$
T_r
\begin{pmatrix}
1&2&\cdots&k\\
1&2&\cdots&k
\end{pmatrix}=g_0^{k},\qquad k=1,\ldots,r,
$$
we finally can conclude from~\eqref{minor.1} that
\begin{equation}\label{minor.2}
\small{
T_r
\begin{pmatrix}
i&i+1&\cdots&i+s-1\\
j_1&j_2&\cdots&j_s
\end{pmatrix}=\dfrac{1}{g_0^{r+s}}\cdot
T_r
\begin{pmatrix}
1&2&\cdots&i+t-1&i+t&i+t+1&\cdots&r\\
1&2&\cdots&i+t-1&j_1+t&j_2+t&\cdots&j_s+t
\end{pmatrix}.}
\end{equation}

This formula together with Theorem~\ref{Thm.Schoenberg.3} and~\cite[Theorem~2.1]{Ando} imply the following fact.
\begin{lemma}\label{Lemma.PF_r}
The polynomial $g(u)=g_0u^l+\cdots+g_l$, $g_0\neq0$, belongs to the class $PF_r$ if and only if all the minors of the matrix $T_r$ of order $r$ are nonnegative.
\end{lemma}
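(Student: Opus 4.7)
The plan is to derive Lemma~\ref{Lemma.PF_r} as a direct synthesis of the three ingredients already assembled: Theorem~\ref{Thm.Schoenberg.3}, the identity \eqref{minor.2}, and the Ando reduction from \cite[Theorem~2.1]{Ando}. The one direction is immediate: if $g \in PF_r$, Schoenberg's theorem says $T_r$ is totally nonnegative, so in particular every minor of order $r$ is nonnegative.

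For the converse, the strategy is to upgrade ``all order-$r$ minors are nonnegative'' to ``$T_r$ is totally nonnegative,'' and then appeal once more to Theorem~\ref{Thm.Schoenberg.3}. By \cite[Theorem~2.1]{Ando}, total nonnegativity of any rectangular matrix is equivalent to the nonnegativity of all minors formed from \emph{consecutive} rows (and arbitrary columns). So I need only check that, under the hypothesis, every minor of $T_r$ of every order $s \leqslant r$ with consecutive row indices is nonnegative.

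Fix such a minor, say with rows $i, i+1, \dots, i+s-1$ and columns $j_1 < \cdots < j_s$. If $i > j_1$, the corresponding submatrix is strictly lower-triangular with a zero on the main diagonal, so the minor vanishes. If $i \leqslant j_1$, formula~\eqref{minor.2} expresses it as $g_0^{-(r+s)}$ times a minor of $T_r$ of order exactly $r$, obtained by prepending the ``identity'' block indexed by $\{1,2,\dots,i+t-1\}$ to the shifted rows and columns. Since $g_0 > 0$ (it is the leading coefficient of $g$ and nonzero by assumption, and positivity follows after normalization or from the fact that a sign-preserving scaling does not affect whether $g \in PF_r$), the sign is entirely controlled by the order-$r$ minor, which is nonnegative by hypothesis.

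Combining these two cases, all consecutive-row minors of $T_r$ of every order are nonnegative; Ando's theorem then gives total nonnegativity of $T_r$, and Theorem~\ref{Thm.Schoenberg.3} finishes the argument. The only nontrivial point is making sure the identity \eqref{minor.2} is applied with the correct shift parameter $t = r+1-i-s$ (so that the consecutive rows $i,\dots,i+s-1$ get pushed to the bottom rows $i+t,\dots,r$ of $T_r$, leaving room to pad with the triangular block on rows $1,\dots,i+t-1$); this is the bookkeeping the paper has already carried out in displaying \eqref{minor.1} and \eqref{minor.2}, so no further calculation is required.
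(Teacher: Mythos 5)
Your proposal is correct and follows essentially the same route as the paper: Theorem~\ref{Thm.Schoenberg.3} for the forward direction, and for the converse the reduction of every consecutive-row minor of order $s\leqslant r$ to an order-$r$ minor via \eqref{minor.1}--\eqref{minor.2} (zero when $i>j_1$, a positive multiple of an order-$r$ minor otherwise), combined with the Fekete-type criterion of \cite[Theorem~2.1]{Ando}. The paper presents exactly this computation in the displayed identities preceding the lemma and then cites the same two results, so there is nothing to add.
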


Now we are in a position to establish a fact which is a basic tool in the proofs of our main results\footnote{In fact,
Lemma~\ref{Lemma.finite.Hurw.matr.tot.neg} proves Conjecture~3.49 in~\cite{Holtz_Tyaglov}.}.

\begin{lemma}\label{Lemma.finite.Hurw.matr.tot.neg}
Let $p$ be the polynomial defined in~\eqref{main.poly}. Its finite Hurwitz matrix $\mathcal{H}_n(p)$
is totally nonnegative with
\begin{equation}\label{Lemma.finite.Hurw.matr.tot.neg.cond}
\Delta_m(p)\neq0\quad\text{and}\quad\Delta_{m+1}(p)=0,
\end{equation}
if and only if $p(z)=q(z)g(z^2)$ with $\deg q=m$,
where $q$ is a stable polynomial and $g\in PF_{\tfrac{n+m}2}$.
\end{lemma}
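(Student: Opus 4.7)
The plan is to use the factorization $\mathcal{H}_n(p) = H_n(q)\mathcal{T}_n(g)$ alluded to before the statement (an immediate extension of Theorems~\ref{Thm.fin.Hurw.matr.factor.1}--\ref{Thm.fin.Hurw.matr.factor.2}) together with Cauchy-Binet in both directions. First, I would set up the canonical decomposition $p(z) = q(z)g(z^2)$ with $g(u) = \gcd(p_0,p_1)$, so that the even and odd parts of $q$ are coprime, and observe that Lemma~\ref{Lemma.finite.Hurw.matr.fact} pins the rank of $\mathcal{H}_n(p)$ at $(n+m)/2$ under the hypothesis~\eqref{Lemma.finite.Hurw.matr.tot.neg.cond}; this forces $\deg q = m$ and $\deg g = (n-m)/2$.

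For the sufficiency direction, assume $q$ is stable of degree $m$ and $g\in PF_{(n+m)/2}$. Then, by Theorem~\ref{Theorem.Hurwitz.stable.Hurwitz.matrix.criteria}, $H_\infty(q)$ is totally nonnegative, so its truncation $H_n(q)$ is too; and by hypothesis every minor of $\mathcal{T}_n(g)$ of order at most $(n+m)/2$ is nonnegative. Applying Cauchy-Binet to $\mathcal{H}_n(p) = H_n(q)\mathcal{T}_n(g)$: any $s\times s$ minor of $\mathcal{H}_n(p)$ is a sum of products of $s\times s$ minors of the two factors. For $s\leqslant (n+m)/2$ each summand is a product of two nonnegative numbers. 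For $s>(n+m)/2$ the rank of the product is bounded above by $(n+m)/2$, so the minor vanishes. Thus $\mathcal{H}_n(p)$ is totally nonnegative.

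For the necessity direction, split the job into proving stability of $q$ and proving $g\in PF_{(n+m)/2}$. Stability of $q$ is the easy half: because $\mathcal{T}_n(g)$ is upper triangular with diagonal $g_0$, Cauchy-Binet applied to the leading principal $j\times j$ minor collapses to a single term for $j\leqslant m$, yielding $\Delta_j(p) = g_0^j\,\Delta_j(q)$. Since total nonnegativity together with $\Delta_m(p)\neq 0$ forces $\Delta_1(p),\dots,\Delta_m(p)>0$, and one may normalize so that $g_0>0$, the Hurwitz determinants of $q$ are all positive, so $q$ is stable by Theorem~\ref{Theorem.Hurwitz.stable.Hurwitz.matrix.criteria}.

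The hard half, which I expect to be the main obstacle, is extracting $g\in PF_{(n+m)/2}$ from total nonnegativity of the product. By Lemma~\ref{Lemma.PF_r} it suffices to show that every minor of order $r=(n+m)/2$ of $T_r$ (equivalently, every $r\times r$ minor of $\mathcal{T}_n(g)$ using the top $r$ rows) is nonnegative. I would imitate the row-selection trick from the proof of Theorem~\ref{Thm.fin.Hurw.matr.factor.1} and take the row index set $I = \{1,2,\dots,m,\,m+2,\,m+4,\,\dots,\,n\}$ of size $r=m+k$. Applying Cauchy-Binet to $\mathcal{H}_n(p)\!\left(\begin{smallmatrix} I \\ c_1,\dots,c_r\end{smallmatrix}\right)$, the staircase support pattern of $H_n(q)$ (coming from $\deg q = m$) together with stability of $q$ should kill all summands except the one with column index set $K=\{1,\dots,r\}$, giving an identity of the form
\begin{equation*}
\mathcal{H}_n(p)\begin{pmatrix} I \\ c_1,\dots,c_r\end{pmatrix} \;=\; H_n(q)\begin{pmatrix} I \\ 1,2,\dots,r\end{pmatrix}\cdot \mathcal{T}_n(g)\begin{pmatrix} 1,2,\dots,r \\ c_1,\dots,c_r\end{pmatrix}.
\end{equation*}
The coefficient factor equals (up to sign) $\Delta_m(q)\cdot [q(0)]^k$, which is strictly positive since $q$ is stable and $q(0)\neq 0$. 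Because the left-hand side is nonnegative by assumption, the $r\times r$ minor of $\mathcal{T}_n(g)$ is nonnegative, proving $g\in PF_{r}$. Verifying that only the column index set $\{1,\dots,r\}$ contributes (i.e., that $H_n(q)(I;K)=0$ for every other $K$ of size $r$ that would couple with a nonzero minor of $\mathcal{T}_n(g)$) is the main technical point, requiring a careful examination of the zero pattern of $H_n(q)$ beyond column $\lceil m/2\rceil$ in each pair of rows.
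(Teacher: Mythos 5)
Your proposal follows essentially the same route as the paper's proof: the factorization $\mathcal{H}_n(p)=H_n(q)\mathcal{T}_n(g)$ with Cauchy--Binet in both directions, the identity $\Delta_j(p)=g_0^j\Delta_j(q)$ for the stability of $q$, and the row selection $\{1,\dots,m,m+2,m+4,\dots,n\}$ combined with Lemma~\ref{Lemma.PF_r} to extract $g\in PF_{\frac{n+m}{2}}$, with the normalizing minor $b_m^{\frac{n-m}{2}}\Delta_m(q)>0$. The one step you flag as unverified --- that the only nonzero minor of $H_n(q)$ on that row set uses columns $1,\dots,\tfrac{n+m}{2}$ --- is exactly the claim the paper also asserts without further detail, so your argument is at the same level of completeness as the published one.
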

\begin{proof}
Let $p(z)=q(z)g(z^2)$, where
$$
q(z)=b_0z^m+b_1z^{m-1}+\cdots+b_m,\qquad b_0>0,
$$
is a stable polynomial and $g\in PF_{\tfrac{n+m}2}$.
The inequalities~\eqref{Lemma.finite.Hurw.matr.tot.neg.cond} follow from~\eqref{degeneracy.R.func}. Furthermore, by Theorems~\ref{Thm.fin.Hurw.matr.factor.1} and~\ref{Thm.fin.Hurw.matr.factor.2} the matrix $\mathcal{H}_n(p)$ can be factorized as follows
\begin{equation}\label{finite.Hurw.matr.gcd.factrozation}
\mathcal{H}_n(p)=H_n(q)\mathcal{T}_n(g),
\end{equation}
where $H_n(q)$ is the $n\times n$ principal submatrix of the infinite Hurwitz matrix
$H_{\infty}(q)$ indexed by rows (and columns) $2$ through $n+1$, and the matrix
$\mathcal{T}_{n}(g)$ is the $n\times n$ leading principal submatrix of the matrix~$\mathcal{T}(g)$.
By Theorem~\ref{Thm.quasi.stable.Hurwitz.matrix.criteria.2}, the matrix $H_{\infty}(q)$ is totally nonnegative,
so is its submatrix $H_{n}(q)$. Moreover, all minors of $\mathcal{T}(g)$ of order $\leqslant\frac{n+m}2$ are nonnegative by assumption.
Therefore, by the Binet-Cauchy formula, all the minors of $\mathcal{H}_n(p)$ of order $\leqslant\frac{n+m}2$ are nonnegative.
Now since $\mathrm{rank}\,\mathcal{H}_n(p)=\frac{n+m}2$ according to Lemma~\ref{Lemma.finite.Hurw.matr.fact}, we obtain that $\mathcal{H}_n(p)$ is totally nonnegative.

\vspace{2mm}

Conversely, suppose that $\mathcal{H}_n(p)$ is totally nonnegative and condition~\eqref{Lemma.finite.Hurw.matr.tot.neg.cond} holds.
By~\cite[Lemma~5, Ch.~XIII]{Gantmakher} (see also~\cite[Corollary 9.1, Ch.~2]{Karlin}), one obtains
\begin{equation}\label{Lemma.finite.Hurw.matr.tot.neg.proof.1}
\begin{array}{l}
\Delta_j(p)>0,\qquad j=1,\ldots,m,\\
\\
\Delta_j(p)=0,\qquad j=m+1,\ldots,n.
\end{array}
\end{equation}
As we mentioned in Section~\ref{section:R.function}, 
this means that $p$ can be factorized as follows
$$
p(z)=q(z)g(z^2)
$$
with $\deg q = m$ and $\deg g=\frac{n-m}2$. Now from the factorization~\eqref{finite.Hurw.matr.gcd.factrozation} and from the Binet-Cauchy formula we obtain that
\begin{equation}\label{lead.princ.min}
\mathcal{H}_n(p)
\begin{pmatrix}
1&2&\cdots&k\\
1&2&\cdots&k
\end{pmatrix}=g_0^{k}\cdot H_n(q)
\begin{pmatrix}
1&2&\cdots&k\\
1&2&\cdots&k
\end{pmatrix},\quad k=1,\ldots,n,
\end{equation}
due to the special structure of $\mathcal{T}_n(g)$. Consequently, we have that $\Delta_i(q) >0$, $i=1, \ldots, m$,
since we can always choose $g_0$, the leading coefficient of $g(u)$, to be positive. So the polynomial $q$ is stable, and the matrix $H_{\infty}(q)$
is totally nonnegative according to Theorem~\ref{Theorem.Hurwitz.stable.Hurwitz.matrix.criteria}.
By Lemma~\ref{Lemma.PF_r}, it suffices to show now that
\begin{equation*}
\mathcal{T}_n(g)
\begin{pmatrix}
1&2&\cdots&\frac{n+m}2\\
j_1&j_2&\cdots&j_{\tfrac{n+m}2}
\end{pmatrix}\geqslant0,
\end{equation*}
for any $j_i$ such that $1\leqslant j_1<j_2<\cdots<j_{\tfrac{n+m}2}\leqslant n$.

Since the only nonzero minor of the matrix $H_n(q)$ formed with rows $1$, $2$, \ldots, $m$, $m+2$, $m+4$, \ldots, $n$, is the one which is
formed with columns $1$, $2$, \ldots, $\frac{n+m}2$, the Binet-Cauchy formula implies
\begin{equation}\label{eqprank}
\begin{array}{l}
\mathcal{T}_n(g)
\begin{pmatrix}
1&2&\cdots&\frac{n+m}2\\
j_1&j_2&\cdots&j_{\tfrac{n+m}2}
\end{pmatrix}=
\dfrac{\mathcal{H}_n(p)
\begin{pmatrix}
1&2&\cdots&m&m+2&m+4&\cdots&n\\
j_1&j_2&\cdots&j_m&j_{m+1}&j_{m+2}&\cdots&j_{\tfrac{n+m}2}
\end{pmatrix}}{H_n(q)
\begin{pmatrix}
1&2&\cdots&m&m+2&m+4&\cdots&n\\
1&2&\cdots&m&m+1&m+2&\cdots&\tfrac{n+m}2
\end{pmatrix}}\\
\\
=\dfrac{\mathcal{H}_n(p)
\begin{pmatrix}
1&2&\cdots&m&m+2&m+4&\cdots&n\\
j_1&j_2&\cdots&j_m&j_{m+1}&j_{m+2}&\cdots&j_{\tfrac{n+m}2}
\end{pmatrix}}{b_m^{\tfrac{n-m}2}\Delta_m(q)}\geqslant0,
\end{array}
\end{equation}
where $\Delta_m(q)>0$ due to the stability of the polynomial $q$. Thus, $g\in PF_{\tfrac{n+m}2}$, as desired.

\end{proof}

To obtain our final results of this section which imply Theorems~\ref{Theorem.nes.cond} and~\ref{Theorem.suf.cond}, we remind the
reader the following remarkable theorems due to Schoenberg~\cite{Schoenberg.m.pos.1,Schoenberg.m.pos.2}.

\begin{theorem}[Schoenberg~\cite{Schoenberg.m.pos.2}]\label{Thm.Schoenberg.1}
Given a real polynomial $g(u)$ of degree $r$, if the matrix $\mathcal{T}(g)$ is
$k$-times nonnegative, then $g(u)$ has no zeros in the sector

\begin{equation}\label{Schoenberg.sector.1}
|\arg u|<\dfrac{\pi k}{r+k-1}\,.
\end{equation}
The constant in~\eqref{Schoenberg.sector.1} is sharp.
\end{theorem}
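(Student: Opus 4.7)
My approach is to express the $k\times k$ minors of $\mathcal{T}(g)$ in terms of symmetric functions of the zeros of $g$, and then argue that a zero in the forbidden sector forces some such minor to be negative. Write $g(u)=g_0\prod_{j=1}^r(u-\rho_j)$, so that $g_i=(-1)^i g_0\,e_i(\rho_1,\ldots,\rho_r)$. A $k\times k$ minor of $\mathcal{T}(g)$ taken from rows $1,\ldots,k$ and columns $1\leqslant j_1<\cdots<j_k$ is, up to sign and an overall factor $g_0^k$, a determinant of elementary symmetric polynomials; by the Jacobi--Trudi identity it equals a Schur polynomial $s_\lambda(\rho_1,\ldots,\rho_r)$ for a partition $\lambda$ determined by the column indices (and of size controlled by $k$ and $r$). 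So $k$-nonnegativity of $\mathcal{T}(g)$ translates, up to a uniform sign, into nonnegativity of a designated family of Schur polynomials evaluated at the zeros of $g$.

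Next, I would suppose toward contradiction that $g$ has a conjugate pair of zeros with modulus $\rho>0$ and arguments $\pm\theta$, where $\theta<\pi k/(r+k-1)$, and split off the corresponding real quadratic factor: $g(u)=(u^2-2\rho\cos\theta\cdot u+\rho^2)\,h(u)$ with $h$ real of degree $r-2$. Exploiting the multiplicativity $\mathcal{T}(gh)=\mathcal{T}(g)\mathcal{T}(h)$ together with the Cauchy--Binet formula, I would identify a specific $k\times k$ submatrix whose determinant reduces, after cancellation of the $h$-contributions, to a closed-form expression in $\rho$ and $\theta$ resembling a Chebyshev polynomial, roughly of the form $\sin((r+k-1)\theta)/\sin\theta$ (the sign of which first changes precisely at $\theta=\pi k/(r+k-1)$). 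This produces the required negative minor. For sharpness, I would exhibit a polynomial whose zeros lie on the boundary rays $|\arg u|=\pi k/(r+k-1)$ (arranged as conjugate pairs with carefully chosen moduli) and verify directly that the matrix $T_k$ from Theorem~\ref{Thm.Schoenberg.3} is totally nonnegative, so that $g\in PF_k$ on the boundary.

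The hard part is the middle step: picking the right $k\times k$ submatrix. After reducing to the Schur-polynomial picture, one must single out the partition $\lambda$ whose Schur polynomial is most sensitive to the angular placement of the offending zero, and extract from the corresponding determinant a clean trigonometric inequality pinning down the optimal angle $\pi k/(r+k-1)$. Everything else — the Jacobi--Trudi rewriting, the factorization into the quadratic and the remainder $h$, and the Cauchy--Binet bookkeeping — is routine once this combinatorial choice is made. I would expect the optimal partition to be (up to conjugation) a rectangular or near-rectangular shape with $k$ parts all equal to $r-1$ or $r-k$, reflecting the symmetry between $r$ and $k$ in the sharp constant.
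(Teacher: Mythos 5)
The paper does not actually prove this theorem: it is quoted from Schoenberg~\cite{Schoenberg.m.pos.2}, and the only supporting material supplied is the sharpness example \eqref{Schoenberg.sharp.1}, also taken from Schoenberg. So there is no in-paper argument to compare yours against; the question is whether your sketch would stand on its own, and it would not.

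Your central step --- split off the real quadratic factor $u^2-2\rho\cos\theta\,u+\rho^2$, write $\mathcal{T}(g)=\mathcal{T}(q)\mathcal{T}(h)$, and ``cancel the $h$-contributions'' via Cauchy--Binet so that a chosen $k\times k$ minor reduces to a closed form in $\rho$ and $\theta$ --- cannot work as stated. Cauchy--Binet expresses a minor of the product as a \emph{sum} over column sets of products of minors of the two factors; the terms involving $h$ do not cancel, and no minor of $\mathcal{T}(g)$ is a function of $\rho$ and $\theta$ alone. More decisively, the sharp constant $\pi k/(r+k-1)$ depends on the full degree $r$, and the class $PF_k$ is not closed under taking factors: the boundary example \eqref{Schoenberg.sharp.1} lies in $PF_k$, yet its extreme quadratic factor has zeros at angular distance $\pi(r-1)/(r+k-1)>\pi/(k+1)$ from the negative real axis and hence is \emph{not} in $PF_k$ by \eqref{counter.example.poly.2}--\eqref{sector.theta}. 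Any argument localized to the offending quadratic factor therefore either proves nothing about $g$ or produces the wrong ($r$-independent) constant $\pi k/(k+1)$; the appearance of $r$ in your conjectured expression $\sin((r+k-1)\theta)/\sin\theta$ is a symptom that the cofactor $h$ must enter the estimate essentially, and your sketch provides no mechanism for it. The Jacobi--Trudi identification of the minors with Schur polynomials (in its dual form, since the entries are elementary symmetric functions of the zeros) is correct but only restates the problem; the step you explicitly defer --- which minor detects the angular position of a zero and why its sign first changes at $\theta=\pi k/(r+k-1)$ --- is the entire content of Schoenberg's proof. Likewise, the sharpness half is not a ``direct'' verification: establishing that $T_k$ is totally nonnegative for the boundary polynomial requires Schoenberg's analysis, not inspection.
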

Indeed, in~\cite{Schoenberg.m.pos.2} it was shown that the polynomial
\begin{equation}\label{Schoenberg.sharp.1}
g(z)=\prod_{j=1}^{r}\left(z+e^{i\theta(r-2j+1)}\right),\qquad \theta=\dfrac{\pi}{r+k-1},
\end{equation}
belongs to the class $PF_k$, but it has zeros on the border of the sector~\eqref{Schoenberg.sector.1}.
These zeros are $-e^{\pm i\theta(r-1)}$.

\begin{theorem}[Schoenberg~~\cite{Schoenberg.m.pos.1}]\label{Thm.Schoenberg.2}
Given a real polynomial $g(u)$, if all zeros of $g(u)$ lie in the sector
\begin{equation}\label{Schoenberg.sector.2}
\pi-\dfrac{\pi}{k+1}\leqslant\arg u\leqslant\pi+\dfrac{\pi}{k+1}\,,
\end{equation}
then $g\in PF_k$.
The constant in~\eqref{Schoenberg.sector.2} is sharp.
\end{theorem}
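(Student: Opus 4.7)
My plan is to factor $g$ over $\mathbb R$ and reduce the assertion to each irreducible real factor separately. A direct comparison of coefficients shows that polynomial multiplication is realized by matrix multiplication of the upper-triangular Toeplitz matrices, $\mathcal T(fg)=\mathcal T(f)\mathcal T(g)$. By the Cauchy-Binet formula, every minor of order $\le k$ of $\mathcal T(fg)$ is a nonnegative combination of products of minors of order $\le k$ of $\mathcal T(f)$ and $\mathcal T(g)$; hence the class $PF_k$ is closed under multiplication, and it suffices to verify $f\in PF_k$ for each linear or real-quadratic factor $f$ of $g$.

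The linear factors are trivial: any real zero of $g$ in the sector is negative, so such a factor has the form $u+r$ with $r>0$, and $\mathcal T(u+r)$ is bidiagonal with $1$'s on the main diagonal and $r$'s on the first superdiagonal---every minor is either $0$ or a positive power of $r$, so $u+r\in PF_\infty\subseteq PF_k$.

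The heart of the argument is the quadratic case. A conjugate pair of complex zeros $-re^{\pm i\psi}$ with $|\psi|\le\pi/(k+1)$ yields the real factor $h(u)=u^2+2ru\cos\psi+r^2$. By Lemma~\ref{Lemma.PF_r}, it suffices to show that every $k\times k$ minor of $T_k$ is nonnegative. Setting $x\eqbd re^{i\psi}$, $y\eqbd re^{-i\psi}$, the nonzero superdiagonals of $\mathcal T(h)$ carry $1$, $x+y$, $xy$---precisely the elementary symmetric polynomials in the two variables $(x,y)$. I factor over $\mathbb C$, $\mathcal T(h)=\mathcal T(u+x)\mathcal T(u+y)$, and apply Cauchy-Binet, so that each $k\times k$ minor of $\mathcal T(h)$ becomes a sum $\sum x^{\delta}y^{\varepsilon}$ over admissible bidiagonal shifts. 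This sum is a Schur polynomial $s_{(m,n)}(x,y)$ in two variables, and a short computation gives
\[
s_{(m,n)}(re^{i\psi},re^{-i\psi})=r^{m+n}\,\frac{\sin((m-n+1)\psi)}{\sin\psi}=r^{m+n}\,U_{m-n}(\cos\psi),
\]
where $U_{m-n}$ is the Chebyshev polynomial of the second kind. Because each factor $\mathcal T(u+x)$ and $\mathcal T(u+y)$ is bidiagonal, the maximal power of $x$ appearing in any $k\times k$ minor is at most $k$, so $m\le k$, whence $m-n\le k$. Since $|\psi|\le\pi/(k+1)\le\pi/(m-n+1)$, we get $U_{m-n}(\cos\psi)\ge 0$, and every minor is nonnegative, proving $h\in PF_k$.

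The main obstacle is the Jacobi-Trudi/Schur bookkeeping, in particular the verification that every $k\times k$ minor of the banded matrix $\mathcal T(h)$ indeed evaluates to $s_{(m,n)}(x,y)$ with $m-n\le k$; this requires a careful lattice-path (or admissibility) argument to control the partitions produced by Cauchy-Binet. Once this bound is in hand, sharpness is immediate: for $\pi/(k+1)<\psi<2\pi/(k+1)$, the $k\times k$ minor of $\mathcal T(u^2+2u\cos\psi+1)$ formed from rows $1,\dots,k$ and columns $2,\dots,k+1$ equals exactly $U_k(\cos\psi)=\sin((k+1)\psi)/\sin\psi<0$, so the polynomial falls outside $PF_k$, showing that the constant $\pi/(k+1)$ cannot be increased.
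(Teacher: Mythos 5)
Your approach is correct, and it is worth noting that the paper does not actually prove Theorem~\ref{Thm.Schoenberg.2}: it is quoted from Schoenberg~\cite{Schoenberg.m.pos.1}, with only the sharpness witness~\eqref{counter.example.poly.2}--\eqref{sector.theta} recorded. What you have written is in effect a self-contained reconstruction of Schoenberg's argument: closure of $PF_k$ under multiplication via $\mathcal T(fg)=\mathcal T(f)\mathcal T(g)$ and Cauchy--Binet, the trivial linear factors, and the quadratic case reduced to the sign of $r^{m+n}U_{m-n}(\cos\psi)$. Your sharpness example is exactly the paper's~\eqref{counter.example.poly.2}. Two trivial loose ends: normalize so that $g_0>0$ (otherwise the diagonal of $\mathcal T(g)$ is negative and the statement fails as literally read), and a zero at $u=0$ contributes a factor $u$ whose Toeplitz matrix is the identity, hence harmless.

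The step you flag as ``the main obstacle'' does close, and more easily than you suggest, precisely because Lemma~\ref{Lemma.PF_r} lets you restrict to minors of $T_k$ using the full consecutive row block $1,\dots,k$. With rows $i_t=t$ and columns $j_1<\cdots<j_k$ drawn from $\{1,\dots,k+2\}$, the offsets $j_t-t$ form a non-decreasing sequence with values in $\{0,1,2\}$, say $a$ zeros, $b$ ones, $c$ twos, $a+b+c=k$. In the Cauchy--Binet sum over intermediate column sets $S=\{s_1<\cdots<s_k\}$, each bidiagonal minor is the monomial $\mathcal T(u+x)[R,S]=x^{\sum_t(s_t-i_t)}$ (zero unless $s_t-i_t\in\{0,1\}$ for all $t$); the offset-$0$ and offset-$2$ positions force $s_t$, while in the offset-$1$ block the interlacing condition $s_t<s_{t+1}$ forces the set of positions with $s_t=t+1$ to be a suffix of that block. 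Hence there is exactly one admissible $S$ for each of the $b+1$ possible exponents, the minor equals $(xy)^c\sum_{j=0}^{b}x^jy^{b-j}=s_{(b+c,\,c)}(x,y)$, and $m-n=b\le k$ gives the value $r^{b+2c}\,\sin((b+1)\psi)/\sin\psi\ge0$ for $|\psi|\le\pi/(k+1)$. With that paragraph inserted, your proof is complete and agrees with the source the paper cites.
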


In~\cite{Schoenberg.m.pos.1,Schoenberg.m.pos.2} it was shown that a polynomial $g(u)$ of the form
\begin{equation}\label{counter.example.poly.2}
(u+ce^{i\theta})(u+ce^{-i\theta}),\quad c>0,
\end{equation}
belongs to $PF_k$ if and only if
\begin{equation}\label{sector.theta}
0\leqslant\theta\leqslant\dfrac{\pi}{k+1}.
\end{equation}
Thus, any product of polynomials of the form~\eqref{counter.example.poly.2} with $\theta$ exterior to the interval~\eqref{sector.theta}
does not belong to $PF_k$.

Now we are in a position to prove the main results of this section.

\begin{theorem}\label{Theorem.nes.cond.m}
Let $p$ be the polynomial of degree $n$ given in~\eqref{main.poly}. If its finite Hurwitz matrix $\mathcal{H}_n(p)$
is totally nonnegative and $\Delta_m(p)\neq0$, $\Delta_{m+1}(p)=0$ for some $m$, $0\leqslant m\leqslant n-2$, then $p$ has no zeros in the sector
\begin{equation}\label{sector.1}
|\arg z|<\dfrac{\pi}{4}\cdot\dfrac{n+m}{n-1}.
\end{equation}
The constant in~\eqref{sector.1} is sharp.
\end{theorem}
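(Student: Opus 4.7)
The plan is to combine the factorization lemma (Lemma~\ref{Lemma.finite.Hurw.matr.tot.neg}) with Schoenberg's sector theorem (Theorem~\ref{Thm.Schoenberg.1}). First I would invoke Lemma~\ref{Lemma.finite.Hurw.matr.tot.neg} to write $p(z) = q(z) g(z^2)$, where $q$ is Hurwitz-stable of degree $m$ and $g$ is a polynomial of degree $r = (n-m)/2$ belonging to the class $PF_k$ with $k = (n+m)/2$ (these indices are genuine integers by Remark~\ref{Remark.degeneracy.even}). Stability of $q$ places all its zeros in the open left half-plane, so they satisfy $|\arg z| > \pi/2$ and contribute no zeros to the sector in question.

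The nontrivial content then concerns the zeros of the factor $g(z^2)$. Applying Theorem~\ref{Thm.Schoenberg.1} to $g$ and noting that $r + k - 1 = n - 1$, I obtain that $g(u)$ has no zeros in the sector $|\arg u| < \pi k/(r+k-1) = \pi(n+m)/(2(n-1))$. Since the zeros of $g(z^2)$ are square roots of the zeros of $g$ and the square root halves the argument bound, it follows that $g(z^2)$ has no zeros with $|\arg z| < \pi(n+m)/(4(n-1))$, which is precisely the sector~\eqref{sector.1}. Combining with the contribution of $q$ gives the necessary condition.

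For sharpness, I would exhibit the Schoenberg extremal polynomial~\eqref{Schoenberg.sharp.1} of degree $r = (n-m)/2$ for the class $PF_k$ with $k = (n+m)/2$, namely $\tilde g(u) = \prod_{j=1}^{r}\bigl(u + e^{i\theta(r-2j+1)}\bigr)$ with $\theta = \pi/(n-1)$, and form $p(z) = (z+1)^m \tilde g(z^2)$. By Lemma~\ref{Lemma.finite.Hurw.matr.tot.neg}, together with the fact that $(z+1)^m$ is stable and $\tilde g \in PF_k$, the matrix $\mathcal{H}_n(p)$ is totally nonnegative with $\Delta_m(p) \neq 0$ and $\Delta_{m+1}(p) = 0$. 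A short computation shows that one of the two square roots of the extremal zero $u_0 = -e^{-i(r-1)\theta}$ of $\tilde g$ is $z_0 = e^{-i(\pi-(r-1)\theta)/2}$, whose argument equals $-\pi(n+m)/(4(n-1))$, landing exactly on the boundary of~\eqref{sector.1}.

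The hard part will be the sharpness half: one has to track carefully how Schoenberg's extremal example for $g$ transforms under the substitution $u = z^2$ and verify that the boundary zero of $g$ produces a zero of $g(z^2)$ at the claimed angle. The forward direction is essentially mechanical once the factorization is in hand, since both Lemma~\ref{Lemma.finite.Hurw.matr.tot.neg} and Theorem~\ref{Thm.Schoenberg.1} are already stated in the precise form needed, and the arithmetic $r + k - 1 = n-1$ is what makes the bounds line up with the claimed constant.
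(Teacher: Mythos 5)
Your proposal is correct and follows essentially the same route as the paper: Lemma~\ref{Lemma.finite.Hurw.matr.tot.neg} gives the factorization $p(z)=q(z)g(z^2)$ with $q$ stable and $g\in PF_{(n+m)/2}$, Theorem~\ref{Thm.Schoenberg.1} with $r+k-1=n-1$ excludes the sector for $g(u)$ and hence (after halving the angle) for $g(z^2)$, and sharpness comes from $(z+1)^m\tilde g(z^2)$ with $\tilde g$ the Schoenberg extremal polynomial~\eqref{Schoenberg.sharp.1}, which is exactly the paper's example $\prod_j\bigl(z^2+e^{i\frac{\pi}{2}\frac{n-m-4j+2}{n-1}}\bigr)$ in disguise. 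Your explicit computation of the boundary zero's argument is a correct elaboration of a step the paper leaves implicit.
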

\begin{proof}
Let the finite Hurwitz matrix $\mathcal{H}_n(p)$ of the polynomial $p$
be totally nonnegative and $\Delta_m(p)\neq0$, $\Delta_{m+1}(p)=0$ for some $m$, $0\leqslant m\leqslant n-2$.
Then by Lemma~\ref{Lemma.finite.Hurw.matr.tot.neg}, one has $p(z)=q(z)g(z^2)$, where $q$ is stable
and $g\in PF_{\tfrac{n+m}2}$. Consequently, by Theorem~\ref{Thm.Schoenberg.1} all zeros of the polynomial $g(u)$
lie outside the sector
\begin{equation}\label{sector.for.section.6}
|\arg u|<\dfrac{\pi}2\cdot\dfrac{n+m}{n-1},
\end{equation}
so all the zeros of $g(z^2)$ lie outside the sector~\eqref{sector.1}. Moreover, for any $m$, $0\leqslant m\leqslant n-2$,
the finite Hurwitz matrix of the polynomial
$$
p(z)=(z+1)^m\prod_{j=1}^{\tfrac{n-m}{2}}\left(z^2+e^{i\tfrac{\pi}{2}\tfrac{n-m-4j+2}{n-1}}\right)
$$
whose zeros lie outside the sector~\eqref{sector.1} and on the border of this sector, is totally nonnegative. This follows from Lemma~\ref{Lemma.finite.Hurw.matr.tot.neg} and from Schoenberg's example~\eqref{Schoenberg.sharp.1}. Thus, the angle on the right-hand side of~\eqref{sector.1} cannot be improved.
\end{proof}
Note that in the case $m=n-2$ the polynomial $p$ is quasi-stable that corresponds to Theorem~\ref{Th.quasi.stable.Hurwitz.matrix.criteria.order.2}. So in
the following theorem we suppose that $m\leqslant n-4$.

\begin{theorem}\label{Theorem.suf.cond.m}
Let the polynomial of degree $n\geqslant 4$ given in~\eqref{main.poly} have no zeros in the sector
\begin{equation}\label{sector.3}
|\arg z|<\dfrac{\pi}{2}\cdot\dfrac{n+m}{n+m+2},
\end{equation}
for some number $m$, $0\leqslant m\leqslant n-4$, and satisfy the ''reflection property'': $p(-\lambda)=0$ for any $\lambda$ such that $p(\lambda)=0$ and $\Re\lambda>0$,
then the finite Hurwitz matrix $\mathcal{H}_n(p)$ of the polynomial $p$ is totally nonnegative
with $\Delta_m(p)\neq0$ and $\Delta_{m+1}(p)=0$. The constant in~\eqref{sector.3} is sharp.

The polynomial $p$ of degree $n$, $1\leqslant n\leqslant3$, is quasi-stable if and only if $\mathcal{H}_n(p)$ is totally nonnegative.
\end{theorem}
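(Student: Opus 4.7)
\emph{Proof proposal.} The plan is to reverse the strategy used in Theorem~\ref{Theorem.nes.cond.m}: given the sector and reflection hypotheses, I will construct the factorization $p(z)=q(z)\,g(z^2)$ required by Lemma~\ref{Lemma.finite.Hurw.matr.tot.neg}, with $q$ stable of degree $m$ and $g\in PF_{(n+m)/2}$, and then read the conclusion off from that lemma. Where the necessary direction invoked Theorem~\ref{Thm.Schoenberg.1} (a necessary sector condition for $PF_k$), this sufficient direction will invoke its companion Theorem~\ref{Thm.Schoenberg.2} (a sufficient sector condition around the negative real axis).

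The reflection hypothesis together with the closure of the zero multiset under complex conjugation partitions the zeros of $p$ into three disjoint classes: (i)~zeros in the open left half-plane whose negatives are \emph{not} zeros of $p$; (ii)~pairs $\{\lambda,-\lambda\}$ with $\Re\lambda>0$; and (iii)~imaginary-axis pairs $\{i\beta,-i\beta\}$. Each class is closed under $\lambda\mapsto\bar\lambda$, so the product of the factors from class~(i) is a real stable polynomial $q(z)$, while classes~(ii) and~(iii) assemble into a real polynomial $g(u)$ via $\prod_{\mathrm{(ii)}}(z^2-\lambda^2)\cdot\prod_{\mathrm{(iii)}}(z^2+\beta^2)=g(z^2)$. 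By construction $\deg q=m$ and $\deg g=(n-m)/2\geqslant 2$, the last inequality using the hypothesis $m\leqslant n-4$. Put $k=(n+m)/2$. The zeros of $g$ are $\lambda^2$ (from class~(ii)) and $-\beta^2<0$ (from class~(iii)); the negative-real zeros trivially lie on the ray $\arg u=\pi$, while for $\lambda^2$ the sector hypothesis gives $\frac{\pi}{2}-\frac{\pi}{n+m+2}\leqslant|\arg\lambda|<\frac{\pi}{2}$, hence $|\arg\lambda^2|=2|\arg\lambda|\geqslant\pi-\frac{2\pi}{n+m+2}=\pi-\frac{\pi}{k+1}$. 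Every zero of $g$ therefore lies within distance $\pi/(k+1)$ of the negative real axis, so Theorem~\ref{Thm.Schoenberg.2} yields $g\in PF_k$. Lemma~\ref{Lemma.finite.Hurw.matr.tot.neg} then delivers both the total nonnegativity of $\mathcal{H}_n(p)$ and the relations $\Delta_m(p)\neq0$, $\Delta_{m+1}(p)=0$.

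For sharpness I take $p(z)=(z+1)^m\,g(z^2)$ with $g$ carrying a pair of zeros $-e^{\pm i\theta}$, $\theta$ slightly larger than $\pi/(k+1)$, and the remaining degree of $g$ filled by zeros on $(-\infty,0)$. This $p$ satisfies the reflection property and has a zero with $|\arg z|=\pi/2-\theta/2$ just inside the sector~\eqref{sector.3}. When $m=n-4$ one can take simply $g(u)=u^2+2u\cos\theta+1$, and Schoenberg's sharp iff-characterization~\eqref{counter.example.poly.2}--\eqref{sector.theta} directly gives $g\notin PF_{(n+m)/2}$; Lemma~\ref{Lemma.finite.Hurw.matr.tot.neg} then forces $\mathcal{H}_n(p)$ to fail to be totally nonnegative. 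For $m<n-4$ the failure of $g\in PF_{(n+m)/2}$ does not follow from Schoenberg's iff (which applies only to pure quadratics), and must be verified by an explicit negative Toeplitz-minor computation for $\mathcal{T}(g)$: this is the main technical obstacle, and may require choosing the ``padding'' factors of $g$ more carefully than just $(u+c)^s$. Finally, for $n\in\{1,2,3\}$ the admissible range $0\leqslant m\leqslant n-4$ is empty, and the equivalence ``$p$ quasi-stable $\Leftrightarrow$ $\mathcal{H}_n(p)$ totally nonnegative'' is handled by a direct case-by-case inspection of the explicit $n\times n$ Hurwitz matrix, combining Theorems~\ref{Theorem.Hurwitz.stable.Hurwitz.matrix.criteria} and~\ref{Th.quasi.stable.Hurwitz.matrix.criteria}.
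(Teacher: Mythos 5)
Your main argument is essentially the paper's: factor $p(z)=q(z)g(z^2)$ using the reflection property, observe that the sector hypothesis places every zero of $g$ in the Schoenberg sector $\pi-\tfrac{\pi}{k+1}\leqslant\arg u\leqslant\pi+\tfrac{\pi}{k+1}$ with $k=\tfrac{n+m}{2}$, invoke Theorem~\ref{Thm.Schoenberg.2} to get $g\in PF_k$, and conclude via Lemma~\ref{Lemma.finite.Hurw.matr.tot.neg}; your sharpness example is likewise the paper's. The one ``technical obstacle'' you flag for $m<n-4$ is not actually open: the paper records (in the remark surrounding \eqref{counter.example.poly.2}--\eqref{sector.theta}, following Theorem~\ref{Thm.Schoenberg.2}) Schoenberg's result that \emph{any product} containing a quadratic factor $(u+ce^{i\theta})(u+ce^{-i\theta})$ with $\theta$ outside $[0,\pi/(k+1)]$ fails to be $PF_k$, so no explicit Toeplitz-minor computation is needed for the padded examples. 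For $1\leqslant n\leqslant 3$ the paper argues via Lemma~\ref{Lemma.finite.Hurw.matr.tot.neg} and Theorem~\ref{Th.quasi.stable.Hurwitz.matrix.criteria.order.2} rather than by inspecting the matrices, but your route works equally well there.
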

\begin{proof}
From the condition of the theorem, it follows that $p$ can be factorized as $p(z)=q(z)g(z^2)$, where
$q$ is stable, and $g(z^2)$ has no zeros in the sector~\eqref{sector.3}. This means that $g(u)$
has all zeros in the sector
\begin{equation}\label{sector.5}
\pi-\dfrac{2\pi}{n+m+2}\leqslant\arg u\leqslant\pi+\dfrac{2\pi}{n+m+2},
\end{equation}
so by Theorem~\ref{Thm.Schoenberg.2}, $g\in PF_{\tfrac{n+m}2}$. Now from Lemma~\ref{Lemma.finite.Hurw.matr.tot.neg}
it follows that $\mathcal{H}_n(p)$ is totally nonnegative.

Furthermore, if there exists a number $\lambda$, $\Re\lambda>0$, such that $p(\lambda)=0$ and $p(-\lambda)\neq0$, then\footnote{$g(u)$ can be a constant.}
$p(z)=q(z)g(z^2)$ and $q(\lambda)=0$. So, $q$ is not stable, and there exists $\Delta_k(q)<0$ or $\Delta_k(q)=0$ and $\Delta_{k+1}(q)\neq0$ for
some $k$, $1\leqslant k\leqslant m-1$. Since $\Delta_k(p)=g_0^k\Delta_k(q)$ by~\eqref{lead.princ.min},
we obtain that $\mathcal{H}_n(p)$ is not totally nonnegative, a contradiction.

Also, if we suppose that the polynomial $p$ satisfies the reflection property,
and that it has zeros in the sector~\eqref{sector.3} for some number $m$, $1\leqslant m\leqslant n-2$, then $p(z)=q(z)g(z^2)$, where $q$ is stable, but $g(u)$ has
roots outside the sector~\eqref{sector.5}. Thus, according to Theorem~\ref{Thm.Schoenberg.2}, $g\not\in PF_{\tfrac{n+m}2}$,
so $\mathcal{H}_n(p)$ is not totally nonnegative by Lemma~\ref{Lemma.finite.Hurw.matr.tot.neg}. Therefore, the angle in~\eqref{sector.3} is sharp.

Finally, for $1\leqslant n \leqslant3$, the statement of the theorem follows immediately from Lemma~\ref{Lemma.finite.Hurw.matr.tot.neg} and Theorem~\ref{Th.quasi.stable.Hurwitz.matrix.criteria.order.2}.
\end{proof}

For example, the polynomial
$$
p(z)=(z+1)^m(z^2+1)^{r_1}(z^4+2z^2\cos\theta+1)^{r_2},
$$
where $r_1=\tfrac{n-m}2-2\left\lfloor\tfrac{n-m}4\right\rfloor$, $r_2=\left\lfloor\tfrac{n-m}4\right\rfloor$,  and
$$
\theta=\dfrac{\pi}{n+m+2}+\varepsilon,
$$
with $\varepsilon>0$ arbitrarily small, has zeros inside the sector~\eqref{sector.5} but arbitrary close to its border (due to~$\varepsilon$).
According to results of Schoenberg~\cite{Schoenberg.m.pos.1} and Lemma~\ref{Lemma.finite.Hurw.matr.tot.neg}, the finite Hurwitz matrix of $p$ is not
totally nonnegative. Thus, the angle on the right-hand side of~\eqref{sector.5} cannot be improved.

If we take $m=0$ for even $n$, and $m=1$ for odd $n$ in Theorem~\ref{Theorem.nes.cond.m}, we get Theorem~\ref{Theorem.nes.cond}. Also, putting
$m=n-4$ in Theorem~\ref{Theorem.suf.cond.m}, we get Theorem~\ref{Theorem.suf.cond}.

\setcounter{equation}{0}

\section{Spectral properties of totally nonnegative finite Hurwitz matrices}\label{section:eigenspace}

In this section, we extend the results of Asner~\cite{Asner} and Lehnigk~\cite{Lehnigk} on the spectrum of totally nonnegative finite Hurwitz matrices and on
their eigenspaces.

\begin{theorem}\label{Thm.eigenspaces}
Let $p$ be the polynomial defined in~\eqref{main.poly} and let its finite Hurwitz matrix $\mathcal{H}_n(p)$ be totally nonnegative
Then $\mathcal{H}_n(p)$ has $\frac{n+m}{2}$ positive eigenvalues and $\frac{n-m}{2}$ zero eigenvalues, where the number $m$, $0\leqslant m\leqslant n$,
is such that
\begin{equation}\label{Thm.eigenspaces.cond}
\Delta_m(p)\neq0\quad\text{and}\quad\Delta_{m+1}(p)=0.
\end{equation}
The eigenspaces of the positive eigenvalues are one-dimensional, and the dimension of the eigenspace of the zero eigenvalue is $\frac{n-m}2$.
All positive eigenvalues of $\mathcal{H}_n(p)$ are simple with the possible exception of exactly one which is $p(0)$ and has multiplicity 2.
\end{theorem}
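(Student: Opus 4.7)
The plan is to combine the factorization $\mathcal{H}_n(p)=H_n(q)\mathcal{T}_n(g)$ from~\eqref{finite.Hurw.matr.gcd.factrozation} with the classical spectral theory of totally nonnegative matrices. By Lemma~\ref{Lemma.finite.Hurw.matr.tot.neg} I first write $p(z)=q(z)g(z^2)$ with $q$ stable of degree $m$ and $g\in PF_{(n+m)/2}$ of degree $l=(n-m)/2$, and invoke the factorization in which $\mathcal{T}_n(g)$ is upper triangular with positive diagonal $g_0$ and hence nonsingular. Lemma~\ref{Lemma.finite.Hurw.matr.fact} then yields $\mathrm{rank}\,\mathcal{H}_n(p)=(n+m)/2$, so the geometric multiplicity of the eigenvalue~$0$ is $(n-m)/2$.

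To pin down the algebraic multiplicity of~$0$, I expand the characteristic polynomial as $\det(\lambda I-\mathcal{H}_n(p))=\sum_{j=0}^{n}(-1)^{n-j}c_j\lambda^{n-j}$, where $c_j$ denotes the sum of the $j\times j$ principal minors of $\mathcal{H}_n(p)$. Total nonnegativity gives $c_j\geqslant0$, and the rank bound forces $c_j=0$ for $j>(n+m)/2$. To exhibit a positive principal minor of order $(n+m)/2$, I select rows and columns $\{1,2,\ldots,m,m+2,m+4,\ldots,n\}$. By Binet--Cauchy applied to~\eqref{finite.Hurw.matr.gcd.factrozation}, exactly as in~\eqref{eqprank}, the contribution of the index set $K=\{1,\ldots,(n+m)/2\}$ is $b_m^{l}\,g_0^{m}\,\Delta_m(q)\,\Delta_l(g)$, where $\Delta_m(q)>0$ because $q$ is stable, and $\Delta_l(g)>0$ because $g\in PF_{m+l}$ with $m\geqslant0$ forces, via Theorem~\ref{Thm.Schoenberg.1}, $g$ to be Hurwitz stable (the Schoenberg sector $|\arg u|<\pi(m+l)/(m+2l-1)$ has angle exceeding $\pi/2$, so all zeros of $g$ have strictly negative real part). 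The remaining contributions are nonnegative since $H_n(q)$ is totally nonnegative and the relevant minors of $\mathcal{T}_n(g)$ are nonnegative by $g\in PF_{(n+m)/2}$. Combined with the Gantmacher--Krein theorem (the eigenvalues of a totally nonnegative matrix are nonnegative reals), this produces exactly $(n+m)/2$ positive eigenvalues and $(n-m)/2$ zero eigenvalues.

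For the eigenspace structure, the last column of $\mathcal{H}_n(p)$ being $(0,\ldots,0,a_n)^{T}$ shows that $e_n$ is always an eigenvector with eigenvalue $p(0)=a_n>0$. To prove that each positive eigenvalue has a one-dimensional eigenspace and that all positive eigenvalues are simple except possibly $p(0)$ (of algebraic multiplicity~$2$), I handle the stable case $m=n$ by appealing to the classical results of Asner~\cite{Asner} and Lehnigk~\cite{Lehnigk}. For the general case I transfer via the similarity $\mathcal{H}_n(p)=H_n(q)\mathcal{T}_n(g)\sim\mathcal{T}_n(g)H_n(q)$ (conjugation by the invertible $\mathcal{T}_n(g)$); the map $v\mapsto\mathcal{T}_n(g)v$ gives a bijection between eigenspaces of $\mathcal{H}_n(p)$ and those of $\mathcal{T}_n(g)H_n(q)$ at the same eigenvalue, so dimension counts match.

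The main obstacle I anticipate is this last step: ruling out spurious coincidences of positive eigenvalues produced by the product $\mathcal{T}_n(g)H_n(q)$ beyond the classical one at $p(0)=q(0)g(0)$. I plan to address it by tracking sign patterns of eigenvectors in the spirit of the Gantmacher--Krein oscillation-theoretic arguments underlying the Asner--Lehnigk theorems in the stable case, thereby isolating $p(0)$ as the unique positive eigenvalue that can acquire algebraic multiplicity~$2$.
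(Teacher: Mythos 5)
Your first half is sound and follows the paper's own route: the factorization $p(z)=q(z)g(z^2)$ from Lemma~\ref{Lemma.finite.Hurw.matr.tot.neg}, the rank count $\mathrm{rank}\,\mathcal{H}_n(p)=\tfrac{n+m}{2}$ from Lemma~\ref{Lemma.finite.Hurw.matr.fact}, and the exhibition of a positive principal minor of order $\tfrac{n+m}{2}$ on rows and columns $1,\dots,m,m+2,m+4,\dots,n$ (using, exactly as the paper does, that $g\in PF_{(n+m)/2}$ forces $g$ to be stable via Theorem~\ref{Thm.Schoenberg.1}, so $\Delta_{(n-m)/2}(g)>0$) together yield $\tfrac{n+m}{2}$ positive eigenvalues and a semisimple zero eigenvalue of multiplicity $\tfrac{n-m}{2}$. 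Up to this point you and the paper agree.

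The gap is in the second half, and you have flagged it yourself. The similarity $\mathcal{H}_n(p)=H_n(q)\mathcal{T}_n(g)\sim\mathcal{T}_n(g)H_n(q)$ buys you nothing: $\mathcal{T}_n(g)H_n(q)$ is not the finite Hurwitz matrix of a stable polynomial (recall $H_n(q)$ is the $n\times n$ truncation of $H_\infty(q)$ with $\deg q=m<n$, not the $m\times m$ matrix $\mathcal{H}_m(q)$), so the Asner--Lehnigk results do not transfer, and the promised ``oscillation-theoretic sign-pattern'' argument is precisely the proof that is missing. The paper's mechanism is concrete and two-step. First, since the last column of $\mathcal{H}_n(p)$ is $a_ne_n$, the characteristic polynomial factors as $(\lambda-a_n)$ times that of the leading $(n-1)\times(n-1)$ principal submatrix $\mathcal{H}_n^{(n-1)}(p)$; the latter is an \emph{irreducible} totally nonnegative matrix, whose positive eigenvalues are simple and distinct by~\cite[Theorem~5.4.5]{FallatJohnson}. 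Hence at most one positive eigenvalue of $\mathcal{H}_n(p)$ can be multiple, it must equal $a_n=p(0)$, and its algebraic multiplicity is then exactly $2$. Second, the one-dimensionality of the eigenspace at $p(0)$ is obtained by proving $\mathrm{rank}\bigl(\mathcal{H}_n(p)-a_nI\bigr)=n-1$: the paper Laplace-expands the minor of $B_n=\mathcal{H}_n(p)-a_nI$ on rows $2,\dots,n$ and columns $1,\dots,n-1$ into $\Delta_{n-2}(p)$ plus nonnegative terms plus a term $\delta_n$ shown to be strictly positive by an explicit induction ($\delta_{2l-1}=a_{2l-1}^{l-2}a_1\delta_l$, $\delta_{2l}=a_{2l}^{l-1}a_0\delta_l$). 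Neither of these two steps is present, even in outline, in your proposal, so the statements about simplicity of the positive eigenvalues and about the eigenspace of $p(0)$ remain unproved.
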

\begin{proof}
The case $m=n$ was considered in~\cite{Asner} and~\cite{Lehnigk}, so in what follows we consider $m\leqslant n-2$.

If $\mathcal{H}_n(p)$ is totally nonnegative, then from~\cite[Lemma~5, Ch.~XIII]{Gantmakher} it follows that there exists a number~$m$, $0\leqslant  m\leqslant n$, such that
the condition~\eqref{Thm.eigenspaces.cond} holds. Now by Lemma~\ref{Lemma.finite.Hurw.matr.fact}, rank of the matrix~$\mathcal{H}_n(p)$ equals $\frac{n+m}{2}$. Thus,
the polynomial $p$ can be factorized as $p(z)=q(z)g(z^2)$, where $q(z)=b_0z^{m}+b_1z^{m-1}+\cdots+b_m$, $b_0>0$, is stable, and $g(u)\in PF_{\tfrac{n+m}{2}}$.
By Theorem~\ref{Thm.Schoenberg.1}, the polynomial~$g(u)$ has no zeros in the sector~\eqref{sector.for.section.6},
%
%
in particular, $g(u)$ is stable, and $\Delta_j(g)>0$, $j=1,\ldots,\frac{n-m}2$. So from~\eqref{finite.Hurw.matr.gcd.factrozation} and~\eqref{eqprank} one has
\begin{equation}\label{Thm.eigenspaces.proof.1}
\begin{array}{l}
\mathcal{H}_n(p)
\begin{pmatrix}
1&2&\cdots&m&m+2&m+4&\cdots&n\\
1&2&\cdots&m&m+2&m+4&\cdots&n\\
\end{pmatrix}=\\
\\
b_{m}^{\frac{n-m}{2}}\Delta_{m}(q)\mathcal{T}_n(g)
\begin{pmatrix}
1&2&\cdots&m&m+1&m+2&\cdots&\frac{n+m}2\\
1&2&\cdots&m&m+2&m+4&\cdots&n\\
\end{pmatrix}=\\
\\
b_{m}^{\frac{n-m}{2}}\cdot\Delta_{m}(q)\cdot g_0^m\cdot \mathcal{T}_n(g)
\begin{pmatrix}
m+1&m+2&\cdots&\frac{n+m}2\\
m+2&m+4&\cdots&n\\
\end{pmatrix}=\\
\\
b_{m}^{\frac{n-m}{2}}\cdot g_0^m\cdot\Delta_{m}(q)\mathcal{T}_n(g)
\begin{pmatrix}
1&2&\cdots&\frac{n-m}2\\
2&4&\cdots&n-m\\
\end{pmatrix}=b_{m}^{\frac{n-m}{2}}\cdot g_0^m\cdot\Delta_{m}(q)\cdot\Delta_{\tfrac{n-m}2}(g)>0.
\end{array}.
\end{equation}

Therefore, the size of the largest nonsingular principal submatrix of $\mathcal{H}_n(p)$ equals $\frac{n+m}{2}$, since
$\mathrm{rank}\,\mathcal{H}_n(p)=\frac{n+m}{2}$. Thus,  we obtain that the algebraic multiplicity of the zero eigenvalue
coincides with its geometric multiplicity and equals $\frac{n-m}{2}=n-\frac{n+m}{2}$ (see, e.g.,~\cite[p.107]{FallatJohnson}). Consequently,
the totally nonnegative matrix $\mathcal{H}_n(p)$ has exactly $\frac{n+m}{2}$ positive eigenvalues.

In the same way as in~\cite{Asner} and~\cite{Lehnigk}, one can prove that only one positive eigenvalue of $\mathcal{H}_n(p)$ can be multiple.
It must be equal $p(0)$ (if any) with multiplicity $2$. Indeed, the matrix $\mathcal{H}_n^{(n-1)}(p)$ obtained from $\mathcal{H}_n(p)$ by deleting the last row and column is an irreducible totally nonnegative matrix and so its positive eigenvalues are simple and distinct by~\cite[Theorem~5.4.5]{FallatJohnson}.
Moreover, the spectrum of $\mathcal{H}_n(p)$ equals the spectrum of $\mathcal{H}_n^{(n-1)}(p)$ together with $a_n=p(0)$. Therefore, only $a_n$ can be a multiple eigenvalue of $\mathcal{H}_n(p)$ (if any). Its  algebraic multiplicity equals two. Let us prove that its geometric multiplicity is one. To do this, we need to prove that
rank of the matrix
\begin{equation}\label{matrix.B}
B_n=\mathcal{H}_n(p)-a_nI,
\end{equation}
is $n-1$, where $I$ denotes the identity matrix. Consider the minor
\begin{equation}\label{minor.3}
B_n\begin{pmatrix}
2&3&\cdots&n\\
1&2&\cdots&n-1\\
\end{pmatrix}=a_0
B_n\begin{pmatrix}
3&4&\cdots&n\\
2&3&\cdots&n-1\\
\end{pmatrix}.
\end{equation}
In the same way as in~\cite{Lehnigk}, that is, using Laplace expansion, one obtains that the minor~\eqref{minor.3}
can be expressed as the sum of $2^{n-3}$ determinants. One of these determinants is $\Delta_{n-2}(p)$.
Let us look at the remaining $2^{n-3}-1$ determinants. Each one of them can be expressed
as a product of an integral power of~$a_n$ and a minor of the matrix $\mathcal{H}_n(p)$. Since
$\mathcal{H}_n(p)$ is totally nonnegative, all its minors are nonnegative, and all the coefficients of $p$ are
positive~\cite{Radke}. From the structure of the matrix~\eqref{matrix.B} it follows that in each of the
aforementioned $2^{n-3}-1$ determinants, $a_n$ stands at a place such
that its cofactor has the sign factor~$-1$. Let us prove that one of these determinants is positive, that is,
that the minor on the right-hand side of~\eqref{minor.3} has the form
\begin{equation}\label{minor.3.expansion}
B_n\begin{pmatrix}
3&4&\cdots&n\\
2&3&\cdots&n-1\\
\end{pmatrix}=\Delta_{n-2}(p)+(2^{n-3}-2)\text{ nonnegative terms}+\delta_n,
\end{equation}
where the minor $\delta_n$ is positive. We prove it by induction, and to do this, we need to
write explicitly the minors $\delta_n$ for $n=4,\ldots,8$. They have the form
$$
\delta_4=a_4a_0,\qquad\delta_5=a_5a_1^2,\qquad\delta_6=a_6^2a_0a_1,\qquad\delta_7=a_7^3a_0a_1,\qquad\delta_8=a_8^4a_0^2.
$$
It is easy to check that such minors exist in the Laplace expansion of the minor~\eqref{minor.3.expansion}
for the corresponding~$n$. Now we prove that in general, the following recursive formul\ae\ hold for $l\geqslant 5$
\begin{equation}\label{recursion}
\begin{array}{l}
\delta_{2l-1}=a_{2l-1}^{l-2}a_1\delta_{l},\\
\\
\delta_{2l}=a_{2l}^{l-1}a_0\delta_{l}.
\end{array}
\end{equation}
The first formula is true for $n=9,10$ as it can easily be seen. Suppose that it is true for all $n(\geqslant9)$ up to some number $N-1$, and consider $n=N$.
Let $N=2k-1\geqslant11$. From the structure of the minor~\eqref{minor.3.expansion} it can be seen that
in its Laplace expansion there exists a minor of the form
$$
a_N^{k-2}a_1B_N\begin{pmatrix}
3&4&\cdots&k\\
2&3&\cdots&k-1\\
\end{pmatrix}.
$$
But the considered minor of the matrix $B_N$ has the same structure as the corresponding minor of the matrix $B_k$
with $a_N$ instead of $a_k$, so we obtain
$$
B_N\begin{pmatrix}
3&4&\cdots&N\\
2&3&\cdots&N-1\\
\end{pmatrix}=a_N^{k-2}a_1B_N\begin{pmatrix}
3&4&\cdots&k\\
2&3&\cdots&k-1\\
\end{pmatrix}=a_{N}^{k-2}a_1\delta_{k}+\text{nonnegative terms}.
$$
Thus, the first formula~\eqref{recursion} is proved. The second one can be proved in a similar
way for $n=2l\geqslant12$. Consequently, the matrix~\eqref{matrix.B} has rank $n-1$, as required.
\end{proof}

Let us illustrate the theorem by an example of a polynomial whose finite Hurwitz matrix has a double non-zero eigenvalue.
The polynomial
$$
p(z)=z^5+(2+\sqrt{2})z^4+\sqrt{2}z^3+(2+2\sqrt{2})z^2+z+2+\sqrt{2},
$$
has the zeros $-2-\sqrt{2}$, $\pm\exp\left(\pm i\dfrac{3\pi}{8}\right)$, so $m=1$. The finite Hurwitz matrix of the polynomial $p$ has the form
$$
\mathcal{H}_5(p)=
\begin{pmatrix}
2+\sqrt{2}&2+2\sqrt{2}&2+\sqrt{2}&0&0\\
1&\sqrt{2}&1&0&0\\
0&2+\sqrt{2}&2+2\sqrt{2}&2+\sqrt{2}&0\\
0&1&\sqrt{2}&1&0\\
0&0&2+\sqrt{2}&2+2\sqrt{2}&2+\sqrt{2}\\
\end{pmatrix}.
$$
By Lemma~\ref{Lemma.finite.Hurw.matr.fact}, rank of this matrix is equal to $3$, and according to Theorem~\ref{Theorem.suf.cond.m},
it is totally nonnegative. The Jordan form of the matrix $\mathcal{H}_5(p)$ is the following
$$
\begin{pmatrix}
3+3\sqrt{2}&0&0&0&0\\
0&2+\sqrt{2}&1&0&0\\
0&0&2+\sqrt{2}&0&0\\
0&0&0&0&0\\
0&0&0&0&0\\
\end{pmatrix},
$$
so the double positive eigenvalue $2+\sqrt{2}$ has exactly one Jordan block that agrees with Theorem~\ref{Thm.eigenspaces}.

\setcounter{equation}{0}

\section{Conclusion}

In this paper, we have presented necessary and sufficient conditions on the containment of the zeros of a given real polynomial outside of a sector in the right-half plane of the complex plane for its finite Hurwitz matrix to be totally nonnegative. We have shown that the enclosing sectors are sharp. As a by-product, we have extended some results known from literature on the spectrum and the eigenspaces of the finite Hurwitz matrix and proved the conjecture posed in~\cite[Conjecture 3.49]{Holtz_Tyaglov}.

In closing, we mention an open problem related to our results. In~\cite{Goodman.Sun}, see also~\cite[Section 4.8]{Pinkus2010}, the \textit{generalized Hurwitz matrix}
\begin{equation}\label{generalized.Hurw.m}
H^{M}_{\infty}(p)=(a_{Mj-i})_{i,j=1}^{\infty}
\end{equation}
for a polynomial $p$ given in~\eqref{main.poly} and a natural number $M\leqslant n$ was introduced.
Here we use the convention that $a_k=0$ for $k < 0$ and $k > n$.

Note that for $M=1$, this matrix is just the matrix $\mathcal{T}(p)$ in~\eqref{Toeplitz.infinite.matrix}, so $\mathcal{T}(p)=H^1_{\infty}(p)$, and for $M=2$
the infinite Hurwitz matrix $H_{\infty}(p)$  in~\eqref{Hurwitz.matrix.infinite.for.poly}, so $H_{\infty}(p)=H^2_{\infty}(p)$.
In~\cite{HoltzKhrushKushel},
it was established that if $p$ has only positive coefficients and the matrix $H^M_{\infty}(p)$ is totally  nonnegative, then the polynomial
$p$ has no zeros in the sector $|\arg z|<\frac{\pi}M$. This result generalizes the results by Aissen-Edrei-Schoenberg-Whitey~\cite{AisenEdreiShoenbergWhitney}
$(M = 1)$, see Theorem 3.6, and the necessary condition in Theorem 4.10 $(M = 2)$, and is related to the theorem by
Cowling-Thron~\cite{Cowling.Thron} $(M = n)$. A challenging problem is to find suitable conditions on the location of the zeros of $p$ for the total nonnegativity of the
generalized Hurwitz matrix. Such a fact was established in~\cite[Theorems~2.1 and 2.4]{Goodman.Sun}: Suppose that $p$ has degree $n\leqslant(M-1)m+1$ for some integer $m\geqslant1$. If
$p$ has all its zeros in the sector
$$
|\pi-\arg z|<\dfrac{\pi}{m+1},
$$
then the matrix $H^{M}_{\infty}(p)$ is totally nonnegative\footnote{In fact, it is even almost strictly totally positive}.
However, if the degree of $p$ increases, this sector becomes smaller and smaller. Therefore, a sufficient condition which does not depend on the degree of $p$ is desired.
In~\cite{HoltzKhrushKushel}, it was shown that the condition that the polynomial~$p$ is
stable does not suffice for general $M$, $M\neq2$, (but is suffices if $M$ is even). Furthermore, nothing seems to be known about the eigenstructure of the
finite generalized Hurwitz matrices for general~$M$.

\section*{Acknowledgements}

The work of M.\,Adm leading to this publication was partially supported by the German Academic Exchange Service (DAAD) with funds from the German Federal Ministry of Education and Research (BMBF) and the People Programme (Marie Curie Actions) of the European Union’s Seventh Framework Programme (FP7/2007-2013) under REA grant agreement n° 605728 (P.R.I.M.E. – Postdoctoral Researchers International Mobility Experience).

The work of M.\,Tyaglov was partially supported by The Program for Professor of Special Appointment (Oriental Scholar) at Shanghai Institutions of
Higher Learning, by the Joint NSFC-ISF Research Program, jointly funded by the National Natural Science Foundation of China and the
Israel Science Foundation (No.11561141001), and by Grant AF0710021 from Shanghai Jiao Tong University University.

\setcounter{equation}{0}

\section{Appendix}

As announced in Section~\ref{section:stable.poly}, we give here a proof
of Proposition~\ref{Propos.Hurw.st.R.func} which seems to explicitly appear the first time in~\cite{Barkovsky.2} (and implicitly in~\cite{Hurwitz,Gantmakher}). The proof we present here is due to Yu.\,Barkovsky~\cite{Barkovsky.private},
and is based on properties of \textit{R}-functions. This proof seems to be new~\footnote{One more proof distinct from this one and from Gantmacher's proof
(see~\cite[Ch.~XV]{Gantmakher}) and more close to Hurwitz's proof~\cite{Hurwitz} was presented by Barkovsky~in~\cite{Barkovsky.2}.},
and we allow ourselves to reproduce it here.

In the sequel, we need the following property of \textit{R}-functions.
\begin{theorem}\label{Th.R-function.general.properties}
Let $h$ and $f$ be real polynomials such that $\deg
h-1\leqslant\deg f=n$. For the real rational function
\begin{equation*}\label{Rational.function.for.R-functions}
R=\dfrac{h}{f},
\end{equation*}
with exactly $r$ poles, the following conditions
are equivalent:
\begin{itemize}
\item[$1)$] $R$ is an \textit{R}-function:
\item[$2)$] The function $F$ can
be represented in the form
\begin{equation}\label{Mittag.Leffler.1}
\displaystyle R(z)=-\alpha z+\beta+\sum^{r}_{j=1}\frac
{\gamma_j}{z+\omega_j},\quad\alpha\geqslant0,\,\,\,
\beta,\omega_j\in\mathbb{R},
\end{equation}
where
\begin{equation}\label{Mittag.Leffler.1.poles.formula}
\gamma_j=\dfrac{h(\omega_j)}{f'(\omega_j)}>0,\quad j=1,\ldots,r.
\end{equation}

\end{itemize}
\end{theorem}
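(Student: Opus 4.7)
The plan is to prove both directions separately, with the implication $2)\Rightarrow 1)$ being essentially a direct calculation and $1)\Rightarrow 2)$ requiring local analysis of $R$ near each pole and at infinity.

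For $2)\Rightarrow 1)$, I would take $z=x+iy$ with $y>0$ and compute
$$
\Im R(z) = -\alpha y + \sum_{j=1}^{r} \gamma_j\,\Im\!\left(\frac{1}{z+\omega_j}\right) = -\alpha y - \sum_{j=1}^{r}\frac{\gamma_j\, y}{|z+\omega_j|^2}.
$$
Since $\alpha\geqslant0$, $\gamma_j>0$, and $y>0$, each term is non-positive and at least one is strictly negative (the residue sum is non-empty as $r\geqslant 1$, or if $r=0$ then $R$ is a polynomial of degree at most $1$ with non-positive leading coefficient, and the conclusion is trivial). So $\Im R(z)<0$, proving that $R$ is an $R$-function.

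For the harder direction $1)\Rightarrow 2)$, I would proceed in three local steps plus a global one. First, I would show all poles of $R$ are real: if $z_0$ is a pole of order $k\geqslant1$ with $\Im z_0>0$, then near $z_0$ one has $R(z)\sim c(z-z_0)^{-k}$ with $c\neq0$; letting $z$ traverse a small circle around $z_0$ that lies entirely in the upper half-plane, the argument of $R(z)$ sweeps out an interval of length $2k\pi\geqslant2\pi$, so $\Im R(z)$ changes sign on this circle, contradicting $\Im R<0$ in the upper half-plane. Second, I would show that each real pole is simple by the same ``argument sweep'' trick: for $z=\omega+\varepsilon e^{i\theta}$ with $\theta\in(0,\pi)$, if the pole at $\omega$ has order $k\geqslant 2$ then $\arg R(z)\approx\arg c - k\theta$ ranges over an interval of length $k\pi\geqslant 2\pi$, again forcing $\Im R$ to take positive values. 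Third, at each simple real pole $\omega$ I would write $R(z)\sim\gamma/(z-\omega)$ and evaluate on $z=\omega+\varepsilon e^{i\theta}$ with $\theta\in(0,\pi)$ to get $\Im R\sim -\gamma\sin\theta/\varepsilon$, which forces the residue $\gamma>0$.

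Finally, the partial-fraction structure and the sign of $\alpha$ come from the global argument. Subtracting the principal parts at all poles, $R(z)-\sum_j\gamma_j/(z+\omega_j)$ is a rational function with no poles, hence a polynomial; since $\deg h\leqslant \deg f+1$, its degree is at most $1$, so it equals $-\alpha z+\beta$ with $\alpha,\beta\in\mathbb{R}$. The sign of $\alpha$ is pinned down by the behavior at infinity: this polynomial must itself map the upper half-plane into the (closed) lower half-plane (modulo a bounded correction which vanishes at $\infty$), so $\Im(-\alpha(x+iy))=-\alpha y$ must be non-positive for $y>0$, giving $\alpha\geqslant0$. The formula $\gamma_j=h(\omega_j)/f'(\omega_j)$ is then the standard residue formula (with the convention that $\omega_j$ denotes the pole so that $z+\omega_j$ has zero there).

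The main obstacle is the reality-and-simplicity step: although each of the ``argument sweep'' arguments is conceptually simple, one must be careful to work with $z$ in a small half-disk (not a full disk) around the suspected pole so that $z$ remains in the upper half-plane throughout, and to verify that the higher-order error terms do not destroy the dominant phase calculation. Everything else is either a residue computation or elementary behavior at infinity.
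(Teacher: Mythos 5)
The paper never proves this theorem: it is stated in the Appendix as a known property of \textit{R}-functions, imported from the literature (it is essentially Theorem~3.4 of~\cite{Holtz_Tyaglov}), and is then used as a black box in the proof of Proposition~\ref{Propos.Hurw.st.R.func}. So there is no in-paper proof to compare against; your argument is a correct, self-contained proof along the standard lines. The computation for $2)\Rightarrow 1)$ is exactly right, and for $1)\Rightarrow 2)$ the three local steps (argument sweep to exclude non-real poles, the same sweep on an upper half-disk to exclude multiple real poles, and the sign of the residue at a simple real pole) together with the extraction of the degree-$\leqslant 1$ polynomial part and the limit $-\alpha=\lim_{y\to\infty}\Im R(iy)/y\leqslant 0$ are all sound. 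Two small points deserve explicit mention. First, you only treat putative poles with $\Im z_0>0$; poles in the open lower half-plane are excluded because $h$ and $f$ are real, so $R(\overline z)=\overline{R(z)}$ and any pole below the real axis would be accompanied by one above it --- say this, since otherwise the reality of the $\omega_j$ is not fully justified. Second, the degenerate case $r=0$, $\alpha=0$ makes $R$ a real constant, for which $\Im R\equiv 0$ and condition $1)$ fails under the strict inequality of Definition~\ref{def.S-function}; the equivalence as stated tacitly assumes $R$ is non-constant, so your claim that the $r=0$ case is ``trivial'' is slightly too quick. Finally, you are right to flag the index convention in~\eqref{Mittag.Leffler.1.poles.formula}: with the poles written as $-\omega_j$, the residue is $h(-\omega_j)/f'(-\omega_j)$, and the formula in the statement should be read accordingly.
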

\noindent According to Definition~\ref{def.S-function}, \textit{R}-functions satisfy the condition~\eqref{R-function.negative.type.condition.doblicate}.

Now we provide some additional relations between the polynomials~\eqref{poly1.13}--\eqref{poly1.12} and the function~\eqref{assoc.function}.
Let the polynomial $p$ be given as in~\eqref{main.poly}. Then
the polynomials $p_0(z^2)$ and $p_1(z^2)$ satisfy the following
identities:
\begin{equation}\label{poly1.2}
\begin{split}
&\displaystyle p_0(z^2)=\frac{p(z)+p(-z)}2,\\
&\displaystyle p_1(z^2)=\frac {p(z)-p(-z)}{2z}.
\end{split}
\end{equation}

From~\eqref{poly1.2} and~\eqref{assoc.function} one can derive the
relation
\begin{equation}\label{poly1.1}
z\Phi(z^2)=\displaystyle\frac{p(z)-p(-z)}{p(z)+p(-z)}
=\displaystyle\frac{1-\displaystyle\frac{p(-z)}{p(z)}}{1+\displaystyle\frac{p(-z)}{p(z)}}.
\end{equation}

Let us recall the following simple necessary condition for
polynomials to be stable. This condition is usually called
the \textit{Stodola condition}~\cite{Gantmakher} (see also~\cite{Barkovsky.2}).

\begin{theorem}[Stodola]\label{Th.Stodola.necessary.condition.Hurwitz}
If the polynomial $p$ is stable, then all its coefficients
are positive\footnote{More precisely, the coefficients must be of
the same sign, but $a_0>0$ by~\eqref{main.poly}.}.
\end{theorem}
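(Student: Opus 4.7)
The plan is to factor $p$ over $\mathbb{R}$ into its real-irreducible pieces and then read off positivity from each piece. Because $p$ has real coefficients, its non-real zeros come in complex conjugate pairs. Because $p$ is stable, each real zero is of the form $-\alpha$ with $\alpha>0$, and each pair of non-real zeros has the form $-a\pm ib$ with $a>0$ and $b\neq0$. Grouping zeros accordingly, $p$ admits the real factorization
\[
p(z)=a_0\prod_{k=1}^{s}(z+\alpha_k)\prod_{j=1}^{t}\bigl(z^2+2a_jz+a_j^2+b_j^2\bigr),
\]
with $s+2t=n$ counted with multiplicity.

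Each linear factor $z+\alpha_k$ has strictly positive coefficients, and each quadratic factor $z^2+2a_jz+(a_j^2+b_j^2)$ does as well, since $2a_j>0$ and $a_j^2+b_j^2>0$. A short induction on the number of factors then shows that a product of polynomials with strictly positive coefficients has strictly positive coefficients: every coefficient of a product $uv$ is a sum of products of pairs of positive numbers, so no cancellation can occur. Multiplying by the leading coefficient $a_0>0$ from~\eqref{main.poly} preserves positivity, and we conclude that every coefficient of $p$ is strictly positive.

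There is essentially no obstacle: strict positivity is preserved at every step, because stability excludes zeros on the imaginary axis (so $2a_j>0$ strictly), excludes a zero at the origin (so every constant term is strictly positive), and repeated factors only raise polynomials with positive coefficients to positive integer powers. The only mildly subtle point, if one wishes to write out the induction explicitly, is noting that the sign pattern is genuinely preserved under multiplication; this is immediate from the convolution formula for coefficients of a product.
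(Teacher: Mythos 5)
Your proof is correct. Note that the paper itself offers no proof of this statement: it is recorded as the classical Stodola condition with a citation to Gantmacher, so there is nothing to compare against line by line. Your argument — splitting $p$ into real linear factors $z+\alpha_k$ with $\alpha_k>0$ and real quadratic factors $z^2+2a_jz+(a_j^2+b_j^2)$ with $a_j>0$, observing that each factor has strictly positive coefficients, and closing with the convolution formula to see that positivity of coefficients is preserved under products — is exactly the standard proof found in the cited sources, and every step (strict negativity of real parts ruling out zeros on the imaginary axis and at the origin, hence strictness throughout) is handled properly.
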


Now we are in a position to prove Proposition~\ref{Propos.Hurw.st.R.func}.

\vspace{2mm}

\noindent\textbf{Proposition~\ref{Propos.Hurw.st.R.func}.}(\cite{Gantmakher,Barkovsky.2})
\textit{The polynomial $p$ defined in~\eqref{main.poly} is
stable if and only if its associated function~$\Phi$
defined in~\eqref{assoc.function} is an \textit{R}-function with
exactly $l$ poles, all of which  are negative, and the limit
$\displaystyle\lim_{u\to\pm\infty}\Phi(u)$ is positive whenever
$n=2l+1$. The number $l$ is defined in~\eqref{floor.poly.degree}.}


%
\begin{proof}
Let the polynomial $p$ be stable. 
%
%
%
First, we show that
\begin{equation}\label{Theorem.main.Hurwitz.stability.proof.2.2}
\displaystyle\left|\frac {p(-z)}{p(z)}\right|<1,\quad\forall z:\,
\Re{z}>0.
\end{equation}
Note that the polynomials $p(z)$ and $p(-z)$ have no common zeros
if $p$ is stable, so the function $\dfrac {p(-z)}{p(z)}$
has exactly $n$ poles. The stable polynomial $p$ can be
represented in the form
\begin{equation*}\label{Theorem.main.Hurwitz.stability.proof.2.5}
p(z)=a_0\prod_k(z-\lambda_k)\prod_j\left(z-\xi_j\right)\left(z-\overline{\xi}_j\right),
\end{equation*}
where $\lambda_k<0,\Re\xi_j<0$ and $\Im\xi_j\neq0$. Then we have
\begin{equation}\label{poly1.5}
\displaystyle\left|\frac {p(-z)}{p(z)}\right|=\prod_{k}\frac
{\left|z+\lambda_k\right|}{\left|z-\lambda_k\right|}\prod_{j}\frac
{\left|z+\xi_j\right|\left|z+\overline{\xi}_j\right|}{\left|z-\overline{\xi}_j\right|\left|z-\xi_j\right|}.
\end{equation}
It is easy to see that the function of type
$\dfrac{z+a}{z-\overline{a}}$~, where $\Re a<0$, maps the right
half-plane to the~unit disk. In fact,
\begin{equation*}\label{Theorem.main.Hurwitz.stability.proof.3}
\left|\dfrac{z+a}{z-\overline{a}}\right|^2=\dfrac{(\Re z+\Re
a)^2+(\Im z+\Im a)^2}{(\Re z-\Re a)^2+(\Im z+\Im
a)^2}<1\qquad\text{whenever}\quad\Re z>0\quad\text{and}\quad\Re
a<0.
\end{equation*}
Now from~\eqref{poly1.5} it follows that the function
$\displaystyle\frac {p(-z)}{p(z)}$ also maps the right half-plane
to the~unit disk as a~product of functions of such a type. Thus,
the inequality~\eqref{Theorem.main.Hurwitz.stability.proof.2.2} is
valid.

At the same time, the fractional linear transformation
$\displaystyle z\mapsto\frac{1-z}{1+z}$ conformally maps the unit
disk to the right half-plane:
\begin{equation}\label{Theorem.main.Hurwitz.stability.proof.5}
|z|<1\implies\Re\left(\frac{1-z}{1+z}\right)=\dfrac{1-|z|^2}{|1+z|^2}>0.
\end{equation}
Consequently, from the
relations~\eqref{poly1.1},~\eqref{Theorem.main.Hurwitz.stability.proof.2.2},
and~\eqref{Theorem.main.Hurwitz.stability.proof.5} we obtain that
the function $z\Phi(z^2)$ maps the~right half-plane to itself, so
the function $-z\Phi(-z^2)$ maps the upper half-plane of the
complex plane to the lower half-plane:
\begin{equation*}\label{Theorem.main.Hurwitz.stability.proof.6}
\Im z>0\implies\Re(-iz)>0\implies
\Re\left[-iz\Phi(-z^2)\right]=\Im\left[z\Phi(-z^2)\right]>0\implies\Im\left[-z\Phi(-z^2)\right]<0.
\end{equation*}
Since $p$ is stable by assumption, the polynomials $p(z)$
and $p(-z)$ have no common zeros, therefore, $p_0$ and~$p_1$ also
have no common zeros, and $p_0(0)\neq0$
by~\eqref{app.poly.odd.even}. Moreover, by
Theorem~\ref{Th.Stodola.necessary.condition.Hurwitz} we have
$a_0>0$ and $a_1>0$, so $\deg p_0=l$
(see~\eqref{poly1.13} and~\eqref{poly1.12}). Thus, the number of
poles of the function $-z\Phi(-z^2)$ equals the number of the zeros
of the polynomial $p_0(-z^2)$, i.e., exactly $2l$.

So according to Theorem~\ref{Th.R-function.general.properties},
the function $-z\Phi(-z^2)$ can be represented in the
form~\eqref{Mittag.Leffler.1}, where all poles are located
symmetrically with respect to~$0$ and $\beta=0$, since
$-z\Phi(-z^2)$ is an odd function. Denote the poles of
$-z\Phi(-z^2)$ by $\pm\nu_1,\ldots,\pm\nu_l$ such that
\begin{equation*}\label{Th.stable.poly.and.R-function.proof.6.5}
0<\nu_1<\nu_2<\ldots<\nu_l.
\end{equation*}
Note that $\nu_1\neq0$, since $p_0(0)\neq0$ as we mentioned above.

Thus, the function $-z\Phi(-z^2)$ can be represented in the
following form
\begin{equation*}\label{Th.stable.poly.and.R-function.proof.7}
-z\Phi(-z^2)=-\alpha z+\sum_{j=1}^l\frac{\gamma_j}{z-\nu_j}+
\sum_{j=1}^l\frac{\gamma_j}{z+\nu_j}= -\alpha z
+\sum_{j=1}^l\frac{2\gamma_jz}{z^2-\nu_j^2},
\quad\alpha\geqslant0,\,\gamma_j,\nu_j>0.
\end{equation*}
By dividing this equality by $-z$ and changing variables as follows
$-z^2\to u$, $2\gamma_j\to\beta_j$, $\nu_j^2\to\omega_j$, we
obtain the following representation of the function $\Phi$:
\begin{equation}\label{Mittag.Leffler}
\Phi(u)=\dfrac{p_1(u)}{p_0(u)}=\alpha+\sum_{j=1}^l\frac{\beta_j}{u+\omega_j},
\end{equation}
where $\alpha\geqslant0,\,\beta_j>0$ and
\begin{equation*}\label{Th.stable.poly.and.R-function.proof.8}
0<\omega_1<\omega_2<\ldots<\omega_l.
\end{equation*}
Here $\alpha=0$ whenever $n=2l$, and
$\alpha=\displaystyle\frac{a_0}{a_1}>0$ whenever $n=2l+1$. Since
$\Phi$ can be represented in the form~\eqref{Mittag.Leffler}, we
have that by Theorem~\ref{Th.R-function.general.properties},
$\Phi$ is an \textit{R}-function with exactly $l$ poles, all of which
are negative, and
$\displaystyle\lim_{u\to\pm\infty}\Phi(u)=\alpha>0$ as $n=2l+1$.

\vspace{3mm}

Conversely, let the polynomial $p$ be defined as
in~\eqref{main.poly} and let its associated function $\Phi$
be an \textit{R}-function with exactly $l$ poles, all of which are
negative, and $\displaystyle\lim_{u\to\pm\infty}\Phi(u)>0$ as
$n=2l+1$. We will show that $p$ is stable.

By Theorem~\ref{Th.R-function.general.properties}, $\Phi$ can be
represented in the form~\eqref{Mittag.Leffler}, where
$\alpha=\displaystyle\lim_{u\to\pm\infty}\Phi(u)\geqslant0$ such
that $\alpha>0$ if $n=2l+1$, and $\alpha=0$ if $n=2l$. Thus, the
polynomial $p_0$ has only negative zeros, and the polynomials
$p_0$ and $p_1$ have no common zeros. Together
with~\eqref{app.poly.odd.even} and~\eqref{assoc.function}, this
implies that the set of zeros of the polynomial $p$ coincides
with the set of solutions of the equation
\begin{equation}\label{poly1.9}
z\Phi(z^2)=-1.
\end{equation}
Let $\lambda$ be a zero of the polynomial $p$ and therefore, a solution of
the equation~\eqref{poly1.9}. Then from~\eqref{Mittag.Leffler}
and~\eqref{poly1.9} we obtain
\begin{equation*}
-1=\Re\left[\lambda\Phi(\lambda^2)\right]=\left[\alpha+
\sum_{j=1}^l\beta_j\frac{|\lambda|^2+\omega_j}{|\lambda^2+\omega_j|^2}
\right]\Re\lambda,
\end{equation*}
where $\alpha\geqslant0$, and $\beta_j,\,\omega_j>0$ for
$j=1,\ldots,l$. Thus, if $\lambda$ is a zero of $p$, then
$\Re\lambda<0$, and so $p$ is stable.
\end{proof}



\end{document}